\newcommand{\R}{\mathbb{R}}
\newcommand{\Sb}{\mathbb{S}}
\newcommand{\B}{\mathcal{B}}
\newcommand{\M}{\mathcal{M}}
\newcommand{\LL}{\mathcal{L}}
\newcommand{\dd}{\mathrm{d}}
\newcommand{\ddd}{\mathrm{d}}
\newtheorem{theorem}{Theorem}
\newtheorem{proposition}{Proposition}
\newtheorem{definition}{Definition}
\newtheorem{lemma}[proposition]{Lemma}
\newtheorem{remark}{Remark}
\newtheorem{corollary}[proposition]{Corollary}
\numberwithin{equation}{section}
\title[Compactness of the linearized Boltzmann operator with resonant collisions]{Compactness property \\of the linearized Boltzmann operator \\for a polyatomic gas undergoing resonant collisions}
 \thanks{This work has been partially supported by the COST Action CA18232 {\it Mathematical models for interacting dynamics on networks} and by the PHC Pessoa proejct n. 47871W}
\author[T. Borsoni]{Thomas Borsoni}
\address{T.B., L.B.: Sorbonne Université, CNRS, Université Paris Cité, Laboratoire Jacques-Louis Lions (LJLL), F-75005 Paris, France}
\email{thomas.borsoni@sorbonne-universite.fr}
\email{laurent.boudin@sorbonne-universite.fr}
\author[L. Boudin]{Laurent Boudin}
\author[F. Salvarani]{Francesco Salvarani}
\address{F.S.: L\'eonard de Vinci P\^ole Universitaire, Research Center\\
92916 Paris La D\'efense, France \& 
Dipartimento di Matematica ``F. Casorati''\\
Universit\`a degli Studi di Pavia\\
Via Ferrata 1, 27100 Pavia, Italy}
\email{francesco.salvarani@unipv.it}
\date{\today}
\begin{document}

\maketitle

\begin{abstract}
In this paper, we investigate a compactness property of the linearized Boltzmann operator in the context of a polyatomic gas whose molecules undergo resonant collisions. The peculiar structure of resonant collision rules allows to tensorize the problem into a velocity-related one, neighbouring the monatomic case, and an internal energy-related one. Our analysis is based on a specific treatment of the internal energy contributions. We also propose a geometric variant of Grad's proof of the same compactness property in the monatomic case.
\end{abstract}

\bigskip

 \textbf{Keywords:} Kinetic equations; resonant collisions; linearized operator; compactness.

\section{Introduction}

The study of linearized collisional operators in kinetic theory has been addressed in the literature long after the introduction of the corresponding quadratic operators \cite{MR0255203}. In fact, taking as an initial reference point Boltzmann's article \cite{bol}, the first questions about the linearized Boltzmann operator were studied several years later by Hilbert in \cite{MR1511713}. Hilbert wrote the linearized operator in an integral form, in the case of a hard-sphere cross section in three spatial dimensions, and proved its compactness. Subsequently, Hecke \cite{MR1544518} and Carleman \cite{MR0098477} extended Hilbert's results and proved that the linearized Boltzmann operator is of Hilbert-Schmidt type.

A substantial improvement in the understanding of those compactness properties was given in \cite{grad2} by Grad who, using an ingenious geometrical argument, proved the compactness of the linearized operator for a wide range of cross sections satisfying the so-called angular cut-off condition, which he previously introduced in \cite{grad1}. He adapted Hecke's proof by stating that the linearized Boltzmann operator was still of Hilbert-Schmidt type in a weighted $L^2$ space.

For a long time, no further significant steps related to the problem were proposed. However, many natural questions remained open, such as, for example, the extension of Grad's result to linearized Boltzmann operators without imposing the angular cut-off condition, or the study of compactness properties for gaseous mixtures, described by systems of coupled Boltzmann equations. Regarding the weakening of assumptions about the cross section, we highlight that an important result of compactness, which does not suppose any small deflection cut-off assumption on the cross sections, is due to Mouhot and Strain \cite{MR2322149}.

When dealing with gaseous mixtures, the Grad geometric argument may fail. Indeed, if the species involved in the mixture have different molar masses, the symmetry between pre- and post-collisional velocities is lost. This symmetry was crucial in Grad's strategy. A new argument to recover the compactness property for gaseous monatomic mixtures with different masses has been proposed in \cite{MR3005625}, where the mono- and multi-species cases are treated in two different ways: Grad's method still applies in the case of mono-species collisions or when the masses of the colliding particles are equal, whereas a pre/post-collisional velocity-crossing argument is used to deal with collisions between particles with different masses. Note that, however, it degenerates when the masses of the colliding particles are equal. We refer to \cite{MR3557717} for a more complete state-of-the-art on the compactness of the linearized Boltzmann operator and on the linearized operator for mixtures of monatomic gases.

The internal structure of polyatomic molecules influences the energy transfer during collisions, so that variables describing the internal state of a given molecule, such as its angular velocity or vibration mode, must be taken into account in the model, and appear in the energy conservation law during a collision. The corresponding kinetic models are based on various different approaches. The first one consists in introducing a set of discrete internal energy variables and consider a set of particle densities satisfying a system of coupled kinetic equations. The compactness of the linearized collision operator in that case has been studied very recently in \cite{https://doi.org/10.48550/arxiv.2201.01365}.

Another approach is based on the introduction of a continuous internal energy variable $I \in \R_+$ and the model describes the evolution of the particle density in an extended phase space, the variables then being time, position, velocity and internal energy. A mechanistic procedure for binary encounters in polyatomic gases was provided by Borgnakke and Larsen \cite{bor-lar-75}, and the corresponding linearized collisional operator was recently studied in \cite{https://doi.org/10.48550/arxiv.2201.01377,BrullShahineThieullen}.

In both approaches, one considers the molecule internal energy as the variable encapsulating its internal structure. A generalisation was proposed in \cite{borsoni2022general}, where the internal variable is rather the internal state of the molecule, for instance its angular velocity and vibrational modes. It was proven that this consideration of the states is equivalent to a model considering directly a continuous internal energy variable $I$ in the set $\R_+$ endowed with some measure $\mu$, called energy law, computable from the model of the molecule and fully characterising it. The case when $\mu$ is a discrete measure corresponds to the model with discrete energy levels, and the case when $\mu$ has a density with respect to the Lebesgue measure corresponds to the model of Borgnakke-Larsen models \cite{bor-lar-75,MR1277241,MR1611828,MR2118066}.

In this paper, we place ourselves in this latter setting, considering the internal energy $I$ in $\R_+$ endowed with the measure $\mu$. The underlying model on the molecule is assumed to be semi-classical, in the sense that rotation should be described using classical mechanics, and vibration using quantum mechanics. Recalling that $\mu$ is explicitly computable from the model of the molecule, the semi-classical setting allows us to ensure that $\mu$ has a density with respect to the Lebesgue measure, which can upper- and lower-bounded by power laws around 0 and infinity, which is helpful in the proof of our main theorem.

The specific and singular case we study is when the molecules of the gas undergo resonant collisions. That means that the microscopic internal and kinetic energies are separately conserved and cannot be handled by using the Borgnakke-Larsen procedure. This behaviour has been observed in some physical situations, such as in the collisions between selectively excited CO$_2$ molecules \cite{lom-fag-pac-gro-15}.

A kinetic operator describing resonant collisions has recently been proposed in \cite{boudin:hal-03629556}, but the study of the compactness of the corresponding linearized operator has not yet been carried out. The purpose of this article is hence to fill in this gap and therefore to study the compactness property of the associated linearized operator.

Although our approach to the problem is consistent with the traditional strategy and consists in the rewriting of the linearized operator in a kernel form, its practical realization required to face specific difficulties and allowed to introduce a new viewpoint, mainly in the treatment of the internal energy variables. In the process, we also propose a variant change of variables, alternative to the one proposed by Grad in \cite{grad2}, which may be helpful later, in the polyatomic mixture case.

Besides, it is clear that the properties of the cross section may heavily influence the proof strategies. However, the precise form of the cross section for resonant collisions cannot be completely deduced from purely mechanical arguments and the corresponding conservation laws. Several forms of cross-sections have been proposed in the literature for the Borgnakke-Larsen model, in both elastic and inelastic cases, see, for example, \cite{MR1618164,cercignani2000applications}. Nevertheless, since, up to our knowledge, the cross sections for resonant collisions have not been explicitly derived yet, we therefore aim to be as general as possible, by assuming that the cross section can be upper-bounded by a product of terms depending, separately, on the velocity and the internal energy variables. It allows to deal on the one hand with the velocity terms, and on the other hand, with the internal energy ones.

The article is organized as follows. In Section~\ref{S:model}, we describe the details of the model introduced in \cite{boudin:hal-03629556}, the hypotheses on the measure $\mu$ characterizing the chemical formula of the gaseous molecules, and the assumptions on the cross sections made for our analysis. Next, in Section~\ref{S:main}, we state our main compactness result, which is proven in Section~\ref{S:proof}. The non-multiplicative part of the linearized operator is split into several contributions, whose compactness are proven separately. Eventually, in the appendix, we propose our alternative to Grad's change of variables and provide its related geometric interpretation.

\section{Resonant collision model} \label{S:model}

We consider a gas composed of one species of polyatomic molecules with mass $m>0$. These molecules evolve in the three-dimensional Euclidean space $\R^3$, and we study their distribution function $f$, which depends on time $t \in \R_+$, position in space $x \in \R^3$, molecular velocity $v \in \R^3$ and internal energy $I \in \R_+$.

\subsection*{Molecule model}
In regards with the general framework defined in \cite{borsoni2022general}, and where internal states are considered, any state-based model can be reduced into a simpler energy-based one, where the internal variable is simply the internal energy, through reduction process. To be more accurate, we work in a semi-classical setting: we choose to describe the rotation of the molecule by using the standard approach of classical mechanics and the vibration part within the quantum mechanics framework, and we adopt an energy-based viewpoint. This means that we consider a measure $\mu$ on the set of internal energies $\R_+$, called the \emph{energy law}, which fully encapsulates the internal structure of the molecule, and which is absolutely continuous with respect to the Lebesgue measure. In the semi-classical case, the density of $\mu$ is typically a scale function, or a sum of pieces of square-roots, which implies that it behaves like a power law around 0, typically a constant or the square-root function, and behaves (or tends to behave) like another power law around infinity. 

Hence, the previous considerations justify our choice about the following properties of $\mu$, which will be supposed from now on. 
\begin{definition} \label{def:adm}
The measure $\mu$ is said admissible if
there exist $\beta_1$, $\beta_2 \geq 0$ and $C$, $C'>0$ such that
\begin{equation} \label{eqassumpmu1}
C \, I^{\beta_1} \leq \frac{\dd \mu (I)}{\dd I} \leq C' \, I^{\beta_1}, \qquad \text{for a.e. } I \in [0,1],
\end{equation}
and, for any $a > 0$, there exists $C_a > 0$ such that
\begin{equation} \label{eqassumpmu2}
C_a \, I^{\beta_2 - a} \leq \frac{\dd \mu (I)}{\dd I} \leq C' \, I^{\beta_2}, \qquad \text{for a.e. } I \geq 1.
\end{equation}
\end{definition}

Note that Definition~\ref{def:adm} is coherent with the hypotheses of the continuous-energy model proposed by Borgnakke and Larsen \cite{bor-lar-75}, see also \cite{MR2118066, MR1611828}. Moreover, the standard assumption $\dd \mu (I)= C \, I^{\alpha}\,\dd I$ with $\alpha \geq 0$, which allows to recover a temperature-independent heat capacity at constant volume and number of internal degrees of freedom, is of course covered by \eqref{eqassumpmu1}--\eqref{eqassumpmu2}.

We moreover define the internal partition function associated to the molecule model, for any temperature $T > 0$, by
\begin{equation} \label{eq:partition}
q(T) = \int_{\R_+} \exp\left(-\frac{I}{k_B T}\right)  \dd \mu(I),
\end{equation}
where $k_B$ stands for the Boltzmann constant. Assumptions \eqref{eqassumpmu1}--\eqref{eqassumpmu2} on $\mu$ straightforwardly imply that $q$ is well-defined and $\mathcal{C}^{\infty}$ on $\R_+^*$.

In order to describe the model, we first consider the microscopic dynamics, then we introduce our hypotheses on the cross section and, finally, we write the kinetic model which governs the time evolution of the density $f$.

\subsection*{Microscopic dynamics}

At fixed time and position, a given molecule is described by its velocity $v$ and its internal energy $I$. The couple $(v,I)$ is called the \textit{state} of the molecule. 

Let us consider the collision between two molecules. The pre-collision states are denoted by $(v,I)$ and $(v_*,I_*)$, and the post-collision ones by $(v',I')$ and $(v'_*,I'_*)$. The system of molecules being isolated, the total momentum and energy of the system are conserved during the collision, {\it i.e.}
\begin{align} 
    v + v_* &= v' + v'_*, \label{eq:conservationmoment}\\
    \frac12 m |v|^2 + I + \frac12 m |v_*|^2 + I_* &= \frac12 m |v'|^2 + I' + \frac12 m |v'_*|^2 + I'_*.\label{eq:conservationenergy}
\end{align}
Following \cite{boudin:hal-03629556}, we suppose that the collisions are \emph{resonant}, which means that \eqref{eq:conservationenergy} is a consequence of the two separate conservation laws
\begin{align}
    |v|^2 + |v_*|^2 &=  |v'|^2 + |v'_*|^2, \label{eq:conservationresonant1}\\
    I + I_* &= I' + I'_*,\label{eq:conservationresonant2}
\end{align}
ensuring that both kinetic and internal energy are separately conserved. In the resonant case, collision rules involving the velocities are the same as in the monatomic case. Consequently, Equations~\eqref{eq:conservationmoment} and \eqref{eq:conservationresonant1} imply the existence of $\sigma \in \Sb^2$ such that
\begin{equation} \label{eq:primevelocities}
    v' = \frac{v+v_*}{2} + \frac{|v-v_*|}{2} \, \sigma, \qquad v'_* = \frac{v+v_*}{2} - \frac{|v-v_*|}{2} \, \sigma.
\end{equation}
Besides, \eqref{eq:conservationresonant2} straightforwardly implies that
\begin{equation} \label{eqlawenergy}
    I'\in [0,I+I_*], \qquad \qquad I'_* = I + I_* - I'.
\end{equation}

We here choose to adopt the viewpoint of the general framework defined in \cite{borsoni2022general}, and do not parametrize the internal energy part. Indeed, if $I'$ is kept as an integration variable, the forthcoming computations may be simplified. We highlight that this choice is completely equivalent to the one of parametrizing the allocation of the internal energy towards $I'$ and $I'_*$, which is made in \cite{boudin:hal-03629556}. Hence, the results of the latter paper can be applied here.

\subsection*{Assumptions on the cross section}

We consider, in what follows, a cross section $B$ depending on $(v,v_*,I,I_*,I',\sigma) \in \R^3 \times \R^3 \times \R_+ \times \R_+ \times \R_+ \times \Sb^2$. Note that, because of \eqref{eqlawenergy}, the dependence with respect to $I'_*$ would be redundant. We emphasize that the cross section $B$ is positive if and only if a collision is possible. Hence, \eqref{eqlawenergy} implies that, for almost every $(v,v_*,I,I_*,I',\sigma) \in \R^3 \times \R^3 \times \R_+ \times \R_+ \times \R_+ \times \Sb^2$,
\begin{equation} \label{positivitykernel}
B(v,v_*,I,I_*,I',\sigma) > 0 \qquad \iff \qquad I' \leq I + I_*.
\end{equation}
Moreover, $B$ should satisfy the usual \emph{symmetry} and \emph{micro-reversibility} conditions: for almost every  $v$, $v_*$, $I$, $I_*$, $I'$ and $\sigma$,
\begin{align} \label{symmetrymicrorevkernel}
B(v_*,v,I_*,I,I+I_*-I',\sigma) &= B(v,v_*,I,I_*,I',\sigma), \\ 
B \left(\frac{v+v_*}{2} + \frac{|v-v_*|}{2} \, \sigma,\frac{v+v_*}{2} - \frac{|v-v_*|}{2} \, \sigma,I',I+I_*-I',I,\sigma \right) &= B(v,v_*,I,I_*,I',\sigma).
\end{align}
In order to obtain our main result, we moreover need $B$ to satisfy several assumptions:
\begin{itemize}
\item there exist functions $B_0$, $b_k$ and $b_i$ such that, for almost every $v$, $v_*$, $I$, $I_*$, $I'$ and $\sigma$,
\begin{multline} \label{eq:B0majorbkbi}
B(v,v_*,I,I_*,I',\sigma) = B_0 \left(|v-v_*|, \left|\cos \left(\widehat{v-v_*, \sigma} \right) \right|,I,I_*,I' \right) \\
\leq b_k\left(|v-v_*|, \left|\cos \left(\widehat{v-v_*, \sigma} \right) \right|\right) \, b_i(I,I_*) \, \mathbf{1}_{I' \leq I+I_*};
\end{multline}
\item there exist $\delta_1 \in [0,1)$, $\delta_2 \in [0,\frac12)$ and $C>0$ such that, for almost every $\rho>0$ and $\theta\in [0,2\pi]$,
\begin{equation} \label{eq:bkmajor}
b_k(\rho, |\cos \theta |) \leq C \left( |\sin \theta | \left( \rho^2 + \rho^{-1} \right) + \rho + \rho^{-\delta_1} + |\sin \theta |^{-\delta_2} \right);
\end{equation}
\item there exist $\gamma \in [0,2)$ and $C>0$ such that, for almost every $I$, $I_*>0$, 
\begin{equation} \label{eq:bimajor}
b_i(I,I_*) \leq C \, \frac{(I+I_*)^{\gamma / 2}}{\mu[0,I+I_*]}.
\end{equation}
\end{itemize}
We also define the averaged cross sections $\Bar{B}$ and $\Bar{B}_0$ by
\begin{align*}
\Bar{B}(v,v_*,I,I_*) &= \int_{\Sb^2} \int_{\R_+} B(v,v_*,I,I_*,I',\sigma) \, \dd \mu(I') \, \dd \sigma, \qquad \text{for a.e. } (v,v_*,I,I_*),\\
\Bar{B}_0(\rho,|\cos \theta |,I,I_*) &= \int_{\R_+} B_0(\rho,|\cos \theta |,I,I_*,I') \, \dd \mu(I'), \qquad \text{for a.e. } (\rho,\theta,I,I_*).
\end{align*}
Assumptions \eqref{eq:B0majorbkbi}--\eqref{eq:bimajor} on $B$ imply that, almost everywhere,
\begin{align}
 \Bar{B}(v,v_*,I,I_*) &\leq C \left( |v-v_*|^2 + |v-v_*|^{-1} \right) \, (I + I_*)^{\gamma / 2}, \label{eq:averagekernelbound}\\
\Bar{B}_0(\rho,|\cos \theta |,I,I_*) &\leq C \left( |\sin \theta | \left( \rho^2 + \rho^{-1} \right) + \rho + \rho^{-\delta_1} + |\sin \theta |^{-\delta_2} \right) \, (I + I_*)^{\gamma / 2}. \label{eq:averagekernelbound0}
\end{align}

\smallskip

\noindent The reader may notice that, by linearity, the result proven in this paper also holds when the cross section $B$ is a linear combination of cross sections following assumptions \eqref{eq:B0majorbkbi}--\eqref{eq:bimajor}. Since the case $\gamma=0$ is covered by Assumption \eqref{eq:bimajor}, it holds in particular when
$$
b_i(I,I_*) \leq  C \, \frac{1 + (I+I_*)^{\gamma / 2}}{\mu[0,I+I_*]}.
$$
Possible cross sections in the polyatomic case have been discussed in \cite{gamba2020cauchy} 

for the Borgnakke-Larsen collisions. Since, for all $\gamma \in (0,2)$,
$$
|v-v_*|^{\gamma} + (I + I_*)^{\gamma / 2} \leq C \left( |v-v_*|^2 + |v-v_*|^{-1} \right) \, \left[ 1 + (I + I_*)^{\gamma / 2} \right],
$$
we do cover here the general form of the cross section proposed in \cite[Equation (3.1)]{gamba2020cauchy}, with the exception of the critical case $\gamma=2$. Finally, the reader shall notice that the presence of $\mu[0,I+I_*]$ comes from the choice of not parametrizing the internal energies. Otherwise, with a parameter $r \in [0,1]$ such that $I' = r(I+I_*)$, then \eqref{eq:bimajor} would become $b_i(I,I_*) \leq C \, (I+I_*)^{\gamma / 2}$.

\subsection*{Kinetic operator, $H$ theorem}

We can now define the Boltzmann collision operator associated to the resonant model. For any measurable function $f$ such that it makes sense, it writes, for almost every $(v,I) \in \R^3 \times \R_+$,
\begin{multline*}
    Q(f,f)(v,I) = \int_{\R^3 \times (\R_+)^2 \times \Sb^2} \big(f(v',I') f(v'_*,I'_*) - f(v,I) f(v_*,I_*) \big) \\\!\!\!\phantom{\int} B(v,v_*,I,I_*,I',\sigma)  \, \dd v_* \, \dd \mu^{\otimes 2}(I_*,I')  \, \dd \sigma,
\end{multline*}
where $v',v'_*$ and $I'_*$ in the integral are defined in \eqref{eq:primevelocities}--\eqref{eqlawenergy}.

The $H$ theorem proved in \cite{boudin:hal-03629556} guarantees the existence of equilibrium states. In particular, the following result holds:
\begin{proposition} \label{prop:htheorem}
For all positive function $g = g(v,I)$ such that the following quantity is defined, we have
$$
\iint_{\R^3 \times \R_+} Q(g,g) (v,I) \, \log g(v,I) \, \ddd v \, \ddd \mu (I) \leq 0.
$$
Moreover, the three following properties are equivalent:
\begin{align}
    (i) \; &Q(\M,\M) = 0, \nonumber \\
    (ii) \; &\iint_{\R^3 \times \R_+} Q(\M,\M) (v,I) \, \emph{log}(\M)(v,I) \, \ddd v \, \ddd \mu (I) = 0,  \nonumber \\
    (iii) \; &\text{There exist } n \geq 0, \, u \in \R^3, \text{ and } T_k,\,T_i > 0 \text{ such that, for almost every } (v,I)\in \R^3 \times \R_+,  \nonumber \\
    &\hspace{20pt} \M(v,I) = \frac{n}{q(T_i)} \left( \frac{m}{2 \pi k_B T_k}\right)^{3/2} \, \exp \left(- \frac{m|v-u|^2}{2 k_B T_k}  - \frac{I}{k_B T_i} \right), \label{eqdefMaxwellian}
\end{align}
where we recall that $q$ is the partition function defined by \eqref{eq:partition}.
\end{proposition}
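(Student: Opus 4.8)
The plan is to follow the classical route for $H$ theorems, which in the present tensorized setting is short; in fact, up to the equivalence recalled above between the non-parametrized formulation used here and the parametrized allocation of the internal energies, the statement is precisely the $H$ theorem proven in \cite{boudin:hal-03629556}, so one may also simply transport that result. Directly, I would start from the weak form
$$
\iint_{\R^3\times\R_+} Q(g,g)(v,I)\,\log g(v,I)\,\ddd v\,\ddd\mu(I) = \int \big(g'g'_*-gg_*\big)\,B\,\log g\,\ddd\omega,
$$
where $g'=g(v',I')$, $g'_*=g(v'_*,I'_*)$, $\ddd\omega=\ddd v\,\ddd v_*\,\ddd\mu(I)\,\ddd\mu(I_*)\,\ddd\mu(I')\,\ddd\sigma$ and $v',v'_*,I'_*$ are given by \eqref{eq:primevelocities}--\eqref{eqlawenergy}. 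I would then perform the usual fourfold symmetrization, combining the particle exchange $(v,I)\leftrightarrow(v_*,I_*)$ with $\sigma\mapsto-\sigma$ and $I'\mapsto I+I_*-I'$, and the pre/post-collisional exchange mapping $(v,v_*,\sigma)$ to $(v',v'_*,(v-v_*)/|v-v_*|)$ and $(I,I_*,I')$ to $(I',I'_*,I)$. On the velocity variables these are the classical monatomic changes of variables with unit Jacobian (the appendix revisits them geometrically); on the internal variables one uses that $I+I_*$, the constraint $\{I'\le I+I_*\}$ and the complementary allocation $I'_*=I+I_*-I'$ are preserved, while the symmetry and micro-reversibility conditions \eqref{symmetrymicrorevkernel} are exactly what makes the measure $B\,\ddd\omega$ behave well under both exchanges. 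Averaging the four resulting expressions gives
$$
\iint_{\R^3\times\R_+} Q(g,g)\,\log g\,\ddd v\,\ddd\mu(I) = -\frac14 \int \big(g'g'_*-gg_*\big)\,\log\!\frac{g'g'_*}{gg_*}\,B\,\ddd\omega \le 0,
$$
the non-positivity following from the monotonicity of the logarithm and the positivity of $B$ on $\{I'\le I+I_*\}$ stated in \eqref{positivitykernel}.

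For the three equivalences I would argue as usual. The implication $(i)\Rightarrow(ii)$ is immediate. For $(ii)\Rightarrow(iii)$, the vanishing of the entropy production forces, by strict monotonicity of $\log$ and \eqref{positivitykernel}, that $\M(v',I')\M(v'_*,I'_*)=\M(v,I)\M(v_*,I_*)$ for almost every collision with $I'\le I+I_*$, i.e.\ that $\log\M$ is a collision invariant. The characteristic feature of resonant collisions is that the relevant conservation laws are the \emph{three} relations \eqref{eq:conservationmoment}, \eqref{eq:conservationresonant1} and \eqref{eq:conservationresonant2}, so the space of collision invariants is expected to be $\mathrm{span}\{1,v_1,v_2,v_3,|v|^2,I\}$, one dimension larger than in the monatomic case since the kinetic and the internal energies are now conserved separately. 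To establish this I would first regularize the almost-everywhere functional equation (mollifying in $(v,\sigma)$ and in $I'$) to obtain a smooth identity, then exploit the tensorized structure: specializing $I_*=I'=I$ reduces the velocity dependence to the classical monatomic collision-invariant problem, so that $\log\M(v,I)=a_0(I)+a_1(I)\cdot v+a_2(I)\,|v|^2$; reinserting this and letting the velocities and $I'$ vary forces $a_1,a_2$ to be constant and $a_0$ to satisfy the Cauchy-type relation $a_0(I)+a_0(I_*)=a_0(I')+a_0(I+I_*-I')$, hence to be affine in $I$. This yields $\log\M(v,I)=c_0+c_1\cdot v+c_2\,|v|^2+c_3\,I$; integrability of $\M$ imposes $c_2,c_3<0$, and completing the square and renaming constants produces exactly the form \eqref{eqdefMaxwellian}, with $T_k=-m/(2k_Bc_2)$, $T_i=-1/(k_Bc_3)$, $u=-c_1/(2c_2)$, and $n$ determined by the prefactor through \eqref{eq:partition}. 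Finally $(iii)\Rightarrow(i)$ is detailed balance: for $\M$ as in \eqref{eqdefMaxwellian}, \eqref{eq:conservationresonant1}--\eqref{eq:conservationresonant2} give $\M(v',I')\M(v'_*,I'_*)=\M(v,I)\M(v_*,I_*)$ pointwise, so the integrand defining $Q(\M,\M)$ vanishes identically.

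I expect the only genuinely delicate point to be the rigorous characterization of collision invariants, namely upgrading the relation valid merely almost everywhere to one that can be differentiated, and handling simultaneously the velocity block — where the resonant rules coincide with the monatomic ones and Grad-type arguments from \cite{grad2} apply — and the internal-energy block, which reduces to a Cauchy-type additivity equation on $\R_+$ under the constraint $I'+I'_*=I+I_*$. This is done in \cite{boudin:hal-03629556}; in the present formulation, keeping $I'$ as an integration variable rather than parametrizing it turns that functional equation into the plain additive form used above, which is the main simplification the chosen viewpoint brings to this part of the argument.
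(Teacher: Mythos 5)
The paper does not actually prove Proposition~\ref{prop:htheorem}: it is imported from \cite{boudin:hal-03629556}, the only supporting argument being the earlier remark in Section~\ref{S:model} that the non-parametrized formulation used here (keeping $I'$ as an integration variable) is equivalent to the parametrized one of that reference, so that its results apply. Your fallback option of transporting the $H$ theorem from \cite{boudin:hal-03629556} is therefore exactly the paper's route, and your deferral of the delicate collision-invariant characterization to that same reference is consistent with it. The parts of your sketch that only use the conservation laws are fine: in particular $(iii)\Rightarrow(i)$ by detailed balance uses nothing but \eqref{eq:conservationresonant1}--\eqref{eq:conservationresonant2}, and $(i)\Rightarrow(ii)$ is immediate.

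There is, however, a genuine gap in the direct proof you outline, at the symmetrization step. You assert that the plain equalities \eqref{symmetrymicrorevkernel} are ``exactly what makes the measure $B\,\ddd\omega$ behave well under both exchanges''. This is not true as stated: your $\ddd\omega$ contains $\ddd\mu(I)\,\ddd\mu(I_*)\,\ddd\mu(I')$, and for an admissible $\mu$ with non-constant density $\phi$ neither the partner exchange $(I,I_*,I')\mapsto(I_*,I,I+I_*-I')$ nor the pre/post exchange $(I,I_*,I')\mapsto(I',I+I_*-I',I)$ preserves $\mu^{\otimes 3}$; each change of variables leaves a factor $\phi(I+I_*-I')/\phi(I')$, respectively $\phi(I+I_*-I')/\phi(I_*)$, which the unweighted conditions on $B$ do not compensate. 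This is not a cosmetic issue: taking, say, $\phi(I)=I$ (allowed by \eqref{eqassumpmu1}) and a kernel whose internal-energy dependence is simply $\mathbf{1}_{I'\le I+I_*}$ times a function of $I+I_*$ (which satisfies the stated symmetry, micro-reversibility and positivity conditions), one checks on distributions concentrated at a single internal energy that the gain and loss contributions to the total mass already disagree, so the fourfold symmetrization identity, and with it the inequality $\iint Q(g,g)\log g\,\ddd v\,\ddd\mu(I)\le 0$, cannot be derived from \eqref{symmetrymicrorevkernel} alone. What is needed is micro-reversibility of the full measure $B\,\ddd v\,\ddd v_*\,\ddd\sigma\,\ddd\mu^{\otimes 3}$ under the two involutions, i.e.\ a weighted version of \eqref{symmetrymicrorevkernel} involving the density of $\mu$ (equivalently, one should carry out the symmetrization in the parametrized variables of \cite{boudin:hal-03629556}, whose weights are precisely arranged for this). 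Since the present paper never uses \eqref{symmetrymicrorevkernel} in its compactness proof and simply quotes the $H$ theorem, your proof must either state and use the measure-compatible micro-reversibility or genuinely fall back on the cited result; as written, the key cancellation is assumed rather than proved.
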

The equilibrium state given by \eqref{eqdefMaxwellian} writes as a product of two Gibbs distributions (which we also call Maxwellian functions by analogy to the monatomic case) with different temperatures $T_k$ and $T_i$. This is a consequence of the specific conservation laws of resonant collisions, which allow no exchange between the kinetic and internal parts.

\section{Main result} \label{S:main}

The standard perturbation setting for the Boltzmann equation combined with Proposition \ref{prop:htheorem} allows us to consider, for some Maxwellian $\M$ of the form \eqref{eqdefMaxwellian}, to define the linearized Boltzmann operator $\LL$ in the following way
\begin{equation} \label{eqdef:linearizedBoltzmann}
\LL = K - \Bar{\nu} \, \text{Id},
\end{equation}
where the collision frequency $\bar{\nu}$, appearing in the multiplicative operator, is given by
$$
\Bar{\nu}(v,I) = \int_{\R^3 \times \R_+}   \,  \M(v_*,I_*) \, \bar{B}(v,v_*,I,I_*) \,  \dd v_*  \, \dd \mu(I_*),
$$
and $K$ reads
\begin{align*}
    K g (v,I) =  \M(v,I)^{1/2} \, \int_{\R^3 \times (\R_+)^2 \times \Sb^2} \Big([\M^{-1/2}g](v',I') +  [\M^{-1/2} g](v'_*,I'_*) -  [\M^{-1/2} g](v_*,I_*) \Big)  \\
\times \M(v_*,I_*) \, B(v,v_*,I,I_*,I',\sigma) \, \dd v_* \, \dd \mu^{\otimes 2}(I_*,I')\, \dd \sigma.
\end{align*}
Let us now state our main result.
\begin{theorem} \label{theorem}
The operator $K$ is compact from $ \displaystyle L^2 \left( \R^3 \times \R_+, \; \ddd v \, \ddd \mu(I) \right)$ to itself.
\end{theorem}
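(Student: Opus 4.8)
The strategy is the classical one going back to Hilbert and Grad: write $K$ as an integral operator $Kg(v,I) = \int k((v,I),(w,J))\, g(w,J)\, \dd w\, \dd\mu(J)$ and show that the kernel $k$ belongs to $L^2$ with respect to the product measure, so that $K$ is Hilbert–Schmidt, hence compact. The operator $K$ naturally splits into three pieces coming from the three terms $[\M^{-1/2}g](v',I')$, $[\M^{-1/2}g](v'_*,I'_*)$ and $-[\M^{-1/2}g](v_*,I_*)$ in the definition. The last ("loss-type", or "gain of order 0") term is the easiest: after the change of variables it produces an operator of the form $g\mapsto \M^{1/2}(v,I)\int g(v_*,I_*)\M^{1/2}(v_*,I_*)\bar B(v,v_*,I,I_*)\,\dd v_*\,\dd\mu(I_*)$, whose kernel is explicit and whose $L^2$-integrability follows directly from the bound \eqref{eq:averagekernelbound} on $\bar B$ together with the Gaussian/exponential decay of $\M$ and the admissibility of $\mu$. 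The first two terms are symmetric to each other under the micro-reversibility relation \eqref{symmetrymicrorevkernel}, so it suffices to treat one of them.

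For the genuinely hard term — the one involving $(v',I')$ — the key structural observation, emphasized in the introduction, is that resonant collisions \emph{tensorize}: the velocity update \eqref{eq:primevelocities} is exactly the monatomic one and is decoupled from the internal-energy update \eqref{eqlawenergy}. So the plan is, first, to perform the velocity change of variables. Here I would use the paper's announced geometric variant of Grad's change of variables to pass from $(v_*,\sigma)$ (at fixed $v$) to new variables adapted to $v'$, and to express the resulting kernel as an integral over the "missing" directions; Grad's argument shows the velocity part of the kernel, after integrating out the internal-energy variables, is locally $L^2$ with the right decay, provided the angular singularity is integrable — which is guaranteed by \eqref{eq:bkmajor} since $\delta_1<1$ and $\delta_2<\tfrac12$ (the exponent $1/2$ being exactly what is needed because the Jacobian of Grad's change of variables contributes a $|\sin\theta|^{-1}$ that must be counterbalanced in $L^2$). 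Second, and this is the new part, I would handle the internal-energy integrals: the variables $I_*$ and $I'$ must be integrated against $\dd\mu(I_*)\,\dd\mu(I')$ on the region $I'\le I+I_*$, with $I'_*=I+I_*-I'$, weighted by $b_i(I,I_*)$ and by ratios of $\M$-factors $\M(v,I)^{1/2}\M(v',I')^{-1/2}\M(v_*,I_*)^{1/2}\M(v'_*,I'_*)^{-1/2}$ which, because internal energy is separately conserved and the internal Maxwellian is a pure exponential, simplify to $\exp(-(I + I_* - I' - I'_*)/(2k_BT_i)) = 1$ up to the residual factor $\exp(-I/(k_BT_i))^{?}$ — in fact the exponents recombine so that only a bounded exponential weight in $I$ survives and the $I_*,I'$ dependence of the exponential cancels, leaving a kernel whose internal part is controlled by $b_i(I,I_*)$ alone. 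The bound \eqref{eq:bimajor}, with the denominator $\mu[0,I+I_*]$ exactly matching the $\mu$-measure of the $I'$-integration region $[0,I+I_*]$, then yields an internal kernel that is $\mu\otimes\mu$-square-integrable thanks to admissibility \eqref{eqassumpmu1}--\eqref{eqassumpmu2} and the restriction $\gamma<2$.

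Putting the pieces together: after the velocity change of variables the full kernel factors (or is dominated by a product) into a velocity kernel times an internal-energy kernel, each separately in the appropriate $L^2$; hence $k\in L^2$ and $K$ is Hilbert–Schmidt. The main obstacle I anticipate is \emph{not} any single estimate but the bookkeeping in the hard term: correctly performing the geometric change of variables so that $v'$ becomes an independent integration variable while keeping explicit control of the Jacobian and the new angular variable, and simultaneously tracking how the four half-powers of Maxwellians redistribute among $(v,I)$, $(v_*,I_*)$, $(v',I')$, $(v'_*,I'_*)$ after the substitution. A secondary technical point is justifying that the singular contributions in \eqref{eq:bkmajor} (the $\rho^{-\delta_1}$ and $|\sin\theta|^{-\delta_2}$ terms, and the $\rho^{-1}$ near zero relative velocity) are integrable against the Gaussian weights near the degenerate sets $v=v_*$ and $\sigma\parallel(v-v_*)$; this is where the precise exponent thresholds $\delta_1<1$, $\delta_2<1/2$ are used, and one must check them after squaring the kernel and integrating, not before.
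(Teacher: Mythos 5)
Your overall architecture matches the paper's: split $K$ into the loss-type term, the $(v',I')$ term and the $(v'_*,I'_*)$ term, treat the loss term as Hilbert--Schmidt via \eqref{eq:averagekernelbound}, reduce the third term to the second by the symmetry $\sigma\mapsto-\sigma$, $I'\mapsto I+I_*-I'$, and exploit the resonant tensorization $v\,/\,I$ together with a Grad-type change of variables and the bounds \eqref{eq:bkmajor}, \eqref{eq:bimajor}. However, your plan for the hard term has a genuine gap: you conclude by claiming the full kernel of that term is globally square integrable, so that the operator is Hilbert--Schmidt. This fails in general. For the velocity factor, the kernel one obtains (cf.\ \eqref{eqdef:kappam}) is controlled by
$e^{-|\eta-v|^2/(8k_BT_k)}\,e^{-\frac{1}{8k_BT_k}\frac{(|\eta|^2-|v|^2)^2}{|\eta-v|^2}}\,|\eta-v|^{-1}\psi^m$,
and the exponential only controls $|\eta-v|$ and the component of $\eta+v$ along $\eta-v$; after squaring, the integral over the two transverse directions of $\eta+v$ diverges, so $\kappa_k\notin L^2(\dd v\,\dd\eta)$ --- this is exactly why Grad had to work in a weighted $L^2$ space or, as here, abandon the Hilbert--Schmidt route for this term. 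For the internal-energy factor the same problem occurs for part of the admissible range of cross sections: with the bound \eqref{eq:lemmaboundkappaigammast0}, $\kappa_i$ is square integrable against $\dd\mu(I)\,\dd\mu(J)$ only when $\gamma<1$ (this is precisely the content of the Remark after Lemma \ref{lemma:kappail2loc}), whereas the theorem is claimed for all $\gamma\in[0,2)$. Also note a bookkeeping slip: the Maxwellian ratio in this term is $\M(v',I')^{-1/2}\M(v,I)^{1/2}\M(v_*,I_*)$ (no factor $\M(v'_*,I'_*)^{-1/2}$ appears), giving the weight $e^{(I'-I-2I_*)/(2k_BT_i)}$; this weight does not cancel and is in fact essential, since integrating it in $I_*$ is what produces the decay $e^{-|J-I|/(4k_BT_i)}$ used later.

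The missing idea is the compactness criterion that replaces Hilbert--Schmidt for this term. The paper writes $K_2$ in kernel form, dominates the kernel by the tensor product $\kappa_k(v,\eta)\,\kappa_i(I,J)$, and then proves (i) $L^1$-type bounds of the kernel in each set of variables (Corollary \ref{corollary:kappai}, Proposition \ref{prop:kappa0bound}, Lemma \ref{lemma2:boundkappa}), which via Cauchy--Schwarz give a uniform decay of $\|K_2g\|_{L^2(\B_R^c)}$ as $R\to\infty$ (Lemma \ref{lemma:boundK2}), and (ii) only \emph{local} $L^2$ integrability of the kernel (Lemmas \ref{lemma:kappail2loc} and \ref{prop:kappa0L2loc}), which yields $L^2$-equicontinuity under translations (Proposition \ref{prop:equicontinuity}); compactness then follows from the Fréchet--Kolmogorov-type criterion as in \cite{MR3005625}. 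In the case $\gamma=0$ an additional weight $I^{-\alpha}$ must be introduced in the dual $L^1$ bound to make the Cauchy--Schwarz argument close. Without replacing your global-$L^2$ claim by such a decay-plus-equicontinuity argument (or by a suitably weighted Hilbert--Schmidt framework), the proof of compactness of the gain terms does not go through.
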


Whitout loss of generality, we can suppose that the Maxwellian $\M$ have zero mean velocity, so that there exist $T_k$, $T_i > 0$ and $c > 0$ such that, for $(v,I) \in \R^3 \times \R_+$,
$$
\M(v,I) = c \, e^{-\frac{m |v|^2}{2 k_B T_k}} \, e^{-\frac{I}{k_B T_i}}.
$$
Finally, we define $M(v) =  c \, e^{-\frac{m |v|^2}{2 k_B T_k}}$ the velocity part of $\M$, so that we can write $\M(v,I) = M(v) \, e^{-\frac{I}{k_B T_i}}$. 

For the sake of simplicity, we assume $m=1$ in the following, without loss of generality. Indeed, $m$ only appears in $M$ through the constant $\frac{m}{2 k_B T_k}$, so that proving our result with $m=1$ for all $T_k>0$ is equivalent with proving it for all $m$, $T_k>0$.

\medskip

The main idea of the proof of Theorem \ref{theorem} is to use the specific structure of the resonant collision case in order to separate the variables $v$ and $I$, and to be able to use the known results on the monatomic case. 

\section{Proof of the main result} \label{S:proof}

We decompose $K$ into the sum of three operators, $K = K_1 + K_2 + K_3$, where
\begin{align*}
    K_1g (v,I)&\!= - \int_{\R^3 \times  \R_+}    \,  g(v_*,I_*) \,  \M(v,I)^{1/2} \M(v_*,I_*)^{1/2} \, \bar{B}(v,v_*,I,I_*) \,  \dd v_*  \, \dd \mu(I_*),\\
    K_2g (v,I)&\!= \!\!\int_{\R^3 \times  (\R_+)^2 \times \Sb^2}   \, [\M^{-1/2}g](v',I') \M(v,I)^{1/2} \M(v_*,I_*) \, B(v,v_*,I,I_*,I',\sigma)  \, \dd v_*  \dd \mu^{\otimes 2}(I_*,I') \, \dd \sigma, \\
    K_3 g (v,I) &\!= \!\!\int_{\R^3 \times  (\R_+)^2 \times \Sb^2}   [\M^{-1/2}g](v'_*,I'_*) \M(v,I)^{1/2} \M(v_*,I_*) \, B(v,v_*,I,I_*,I',\sigma)  \, \dd v_*   \dd \mu^{\otimes 2}(I_*,I') \, \dd \sigma.
\end{align*}
Applying the change of variable $\sigma \mapsto -\sigma$ and $I' \mapsto I+I_*-I'$ in $K_3$ allows to recover the same form as $K_2$, so that it is enough to prove that $K_1$ and $K_2$ are compact.

\subsection{Compactness of $K_1$}
In order to prove the compactness of $K_1$, we observe that it is in fact a Hilbert-Schmidt operator. We define the kernel, for almost every $v$, $I$, $v_*$, $I_*$,
$$
\kappa_1(v,I,v_*,I_*) = - \M(v,I)^{1/2} \M(v_*,I_*)^{1/2} \, \Bar{B}(v,v_*,I,I_*).
$$
It satisfies the expected $L^2$ property. 
\begin{proposition} \label{proposition:k1}
The kernel $\kappa_1$ belongs to $L^2 \left( \left(\R^3 \times \R_+ \right)^2, \; \ddd v \, \ddd \mu(I) \, \ddd v_* \, \ddd \mu(I_*) \right)$.
\end{proposition}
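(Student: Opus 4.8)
The plan is to show directly that $\kappa_1 \in L^2$ by estimating the integral
\[
\iint_{(\R^3\times\R_+)^2} \kappa_1(v,I,v_*,I_*)^2 \, \dd v \, \dd\mu(I) \, \dd v_* \, \dd\mu(I_*),
\]
which, since $\M^{1/2}(v,I)\M^{1/2}(v_*,I_*) = M(v)^{1/2}M(v_*)^{1/2} e^{-I/(2k_BT_i)}e^{-I_*/(2k_BT_i)}$, factorizes into a product of a velocity integral and an internal-energy integral once the bound \eqref{eq:averagekernelbound} on $\Bar B$ is inserted. First I would use $\Bar B(v,v_*,I,I_*)^2 \leq C(|v-v_*|^2+|v-v_*|^{-1})^2 (I+I_*)^{\gamma}$ and then crudely split $(I+I_*)^\gamma \leq C(I^\gamma + I_*^\gamma)$ and $(|v-v_*|^2+|v-v_*|^{-1})^2 \leq C(|v|^4 + |v_*|^4 + |v-v_*|^{-2})$, turning the double integral into a finite sum of products of one-variable (or easily separable two-variable) integrals.

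The velocity part then reduces to checking that expressions such as $\int_{\R^3} M(v) |v|^k \dd v$ are finite for $k=0,4$ — immediate from Gaussian integrability — together with the one genuinely coupled term $\iint M(v)^{1/2}M(v_*)^{1/2}|v-v_*|^{-2}\,\dd v\,\dd v_*$. For the latter I would bound $M(v)^{1/2}M(v_*)^{1/2}$ by $M(v)^{1/2}$ times a constant (using $M(v_*)^{1/2}\leq \sup M^{1/2}$) on the region where $|v-v_*|$ is small, so that the $v_*$-integral near $v$ behaves like $\int_{|w|\leq 1}|w|^{-2}\dd w < \infty$ in $\R^3$, while on $|v-v_*|\geq 1$ the kernel $|v-v_*|^{-2}$ is bounded and the Gaussians make everything integrable; alternatively one invokes that $|v-v_*|^{-2}$ is locally integrable in dimension $3$ and the Gaussian weights control the behavior at infinity.

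The internal-energy part requires controlling $\int_{\R_+}\int_{\R_+} e^{-I/(2k_BT_i)}e^{-I_*/(2k_BT_i)}(I^\gamma+I_*^\gamma)\,\dd\mu(I)\,\dd\mu(I_*)$, which factorizes into integrals of the form $\int_{\R_+} I^\gamma e^{-I/(2k_BT_i)}\dd\mu(I)$ and $\int_{\R_+} e^{-I/(2k_BT_i)}\dd\mu(I)$. Here is where the admissibility of $\mu$ (Definition~\ref{def:adm}) is used: near $0$ the density is $\asymp I^{\beta_1}$ with $\beta_1\geq 0$ so there is no singularity, and near infinity the density is $\leq C' I^{\beta_2}$, so these integrals converge against the exponential factor for any $\gamma\geq 0$ — essentially they are finite multiples of generalized partition-function values, differentiable in temperature as noted after \eqref{eq:partition}. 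I would phrase this as: the density bounds \eqref{eqassumpmu1}--\eqref{eqassumpmu2} give $\int_{\R_+} I^k e^{-cI}\dd\mu(I) < \infty$ for every $k\geq 0$ and $c>0$.

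The only step needing a little care is the coupled velocity term with the $|v-v_*|^{-2}$ singularity; everything else is bookkeeping with Gaussian and power-law-against-exponential integrals. I would state the splitting lemmas $(a+b)^p \leq C_p(a^p+b^p)$ for the relevant exponents explicitly at the start, carry out the factorization, cite local integrability of $|w|^{-2}$ in $\R^3$ for the singular piece, and conclude finiteness of the whole expression, hence $\kappa_1\in L^2$.
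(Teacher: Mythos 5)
Your argument is correct, and it follows the same overall strategy as the paper: show $\kappa_1$ is square integrable (hence $K_1$ is Hilbert--Schmidt) by inserting the bound \eqref{eq:averagekernelbound} on $\Bar{B}$ and then checking Gaussian-in-velocity and exponential-in-energy integrability, with the admissibility bounds \eqref{eqassumpmu1}--\eqref{eqassumpmu2} controlling the $\mu$-integrals. The one technical point where you diverge is the treatment of the singular factor $|v-v_*|^{-2}$: the paper performs the linear change of variables $(v,v_*)\mapsto(V_*,\xi)=(v-v_*,v+v_*)$, under which the Gaussian weight factorizes and the singular factor is integrated directly against $e^{-|V_*|^2/(4k_BT_k)}$ in $\R^3$ (finite since $-2>-3$), with no case distinction and no splitting of $(I+I_*)^\gamma$; you instead keep the original variables, split the velocity integral near and far from the diagonal, and invoke local integrability of $|w|^{-2}$ in $\R^3$ plus Gaussian decay at infinity, and you factorize the energy integral via $(I+I_*)^\gamma\leq C(I^\gamma+I_*^\gamma)$. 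Both routes are fine; the change of variables is slightly slicker, your version is more elementary bookkeeping. One small inconsistency to fix when writing it up: after squaring $\kappa_1$ the weights are $M(v)M(v_*)\,e^{-(I+I_*)/(k_BT_i)}$, not $M(v)^{1/2}M(v_*)^{1/2}\,e^{-(I+I_*)/(2k_BT_i)}$ as in some of your displayed estimates; since $M$ is bounded and $I,I_*\geq 0$, the squared weights are dominated (up to a constant) by the ones you use, so your bounds remain valid, but the exponents should be stated consistently.
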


\begin{proof}
Applying the change of variables  $(v,v_*) \mapsto (v-v_*,v+v_*) = (V_*,\xi)$ and recalling the bound \eqref{eq:averagekernelbound} on $\Bar{B}$, we have
\begin{align*}
&\iint_{\R^3 \times \R_+} \iint_{\R^3 \times \R_+} \kappa_1(v,I,v_*,I_*)^2 \, \dd v \, \dd \mu(I) \, \dd v_* \, \dd \mu(I_*)\\
= \; &c^2 \iiiint_{(\R^3 \times \R_+)^2} e^{-\frac{|v|^2 + |v_*|^2}{2 k_B T_k}} \, e^{-\frac{I+I_*}{k_B T_i}} \, \bar{B}(v,v_*,I,I_*)^2 \, \dd v \, \dd \mu(I) \, \dd v_* \, \dd \mu(I_*)\\
\leq \; &C \left(\iint_{(\R^3)^2} e^{-\frac{|V_*|^2 + |\xi|^2}{4 k_B T_k}} \left(|V_*|^2 + |V_*|^{-1} \right)^2 \, \dd V_* \, \dd \xi \right) \left( \iint_{(\R_+)^2} e^{-\frac{I+I_*}{k_B T_i}} \, (I+I_*)^{\gamma}\, \dd \mu^{\otimes 2}(I,I_*) \right),
\end{align*}
where both previous integrals in the parentheses are finite.
\end{proof}

The compactness of $K_1$ is then a straightforward consequence of Proposition \ref{proposition:k1}, since $K_1$ is a Hilbert-Schmidt operator.

\subsection{Compactness of $K_2$}

With the same strategy as in \cite{MR3005625}, we prove the compactness of $K_2$, using the fact that $K_2$ is compact if it satisfies
\begin{itemize}
    \item a uniform decay at infinity, proven in Lemma \ref{lemma:boundK2},
    \item an equicontinuity property, proven in Proposition \ref{prop:equicontinuity}.
\end{itemize}
The consideration of resonant collision rules \eqref{eq:conservationresonant1}--\eqref{eq:conservationresonant2} along with the assumption \eqref{eq:B0majorbkbi} on the cross section  allows us to deal independently with a term in $v$ and another in $I$. While the term in the internal energy requires a specific treatment, the one in $v$ corresponds to the well-known monatomic case, whose compactness has already been proved \cite{grad2}. In our analysis, we shall use an alternative change of variable, whose properties have been detailed in Appendix~\ref{annex:monoatomic}.

\medskip

First, we intend to write $K_2$ as a kernel operator. For the sake of clarity, we now change the notation $J \equiv I'$. Recall that
$$
K_2  g (v,I) = \int_{\R^3 \times  (\R_+)^2 \times \Sb^2}   \, g(v',J) \, {\M(v',J)}^{-1/2} \M^{1/2} \M_* \, B(v,v_*,I,I_*,J,\sigma)  \, \dd v_*  \, \dd \mu^{\otimes 2}(I_*,J) \, \dd \sigma.
$$
We clearly have, almost everywhere,
$$
{\M(v',J)}^{-1/2} \M(v,I)^{1/2} \M \left(v_*,I_* \right) = M(v')^{-1/2} M(v)^{1/2} M(v_*) \, e^{\frac{J - I - 2 I_*}{2 k_B T_i}}.
$$
Thus $K_2 g(v,I)$ reads
\begin{multline*}
K_2  g (v,I)= \int_{\R^3 \times  (\R_+)^2 \times \Sb^2}   \, g(v',J) \, M(v')^{-1/2} M(v)^{1/2} M(v_*) \, e^{\frac{J-I - 2 I_*}{2 k_B T_i}} \\ B(v,v_*,I,I_*,J,\sigma)  \, \dd v_*  \, \dd \mu^{\otimes 2}(I_*,J) \, \dd \sigma.
\end{multline*}

We then perform several changes of variable in order to write $K_2$ as a kernel operator. With that purpose in mind, we recall the definition of the \emph{modified Bessel function of first kind of order $0$}, which we denote $\mathbb{I}_0$ throughout this paper, for all $X \in \R$,
\begin{equation} \label{eqdef:bessel}
    \mathbb{I}_0(X) = \frac{1}{2 \pi} \int_{0}^{2 \pi} \, e^{\, X \cos\theta } \, \dd \theta.
\end{equation}
We have the following proposition.
\begin{proposition}
We define, for $v$, $p \in \R^3$ and $I$, $I_*$, $J \in \R_+$,
\begin{align*}
\psi(v,p,I,I_*,J) = 8 \pi \, c \, \int_{\R_+}  &\exp\left( -\frac{r^2 + |v|^2|\sin(\widehat{v,p})|^2}{2 k_B T_k} \right) \, \mathbb{I}_0 \left( \frac{r \,  |v| \, |\sin(\widehat{v,p})| }{ k_B T_k} \right) \\ &\hspace{50pt} \times B_0\left( \sqrt{r^2 + |p|^2},\frac{|r^2 - |p|^2|}{r^2 + |p|^2},I,I_*,J \right) \, \frac{r}{\sqrt{r^2 + |p|^2}}     \,  \ddd r,
\end{align*}
as well as, for $v$, $\eta \in \R^3$ and $I$, $I_*$, $J \in \R_+$
\begin{equation} \label{eq:forme1kappa2}
\kappa_2(v,I,\eta,J) = \int_{\R_+}  e^{\frac{J-I - 2 I_*}{2 k_B T_i}} \, e^{ -\frac{ |\eta - v|^2}{8 k_B T_k}} \, e^{ -\frac{ 1}{8 k_B T_k} \, \frac{(|\eta|^2 - |v|^2)^2}{|\eta - v|^2} }  \, |\eta - v|^{-1} \, \psi(v,\eta - v,I,I_*,J)  \, \ddd \mu(I_*).
\end{equation}
Then we have, for almost every $(v,I)\in \R^3\times \R_+$, 
$$
K_2(g)(v,I) = \iint_{\R^3 \times \R_+}  g(\eta,J)  \,  \kappa_2(v,I,\eta,J) \, \ddd \eta \, \ddd \mu(J).
$$
\end{proposition}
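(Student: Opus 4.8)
The plan is to compute $K_2 g(v,I)$ by successively changing variables so that the integrand is expressed in terms of the post-collision velocity $v'$ and the internal energy $J=I'$, treating $(I_*,J)$ as spectators throughout and focusing the geometric work on the velocity integral. Starting from the expression
$$
K_2 g(v,I) = \int_{\R^3\times(\R_+)^2\times\Sb^2} g(v',J)\,M(v')^{-1/2}M(v)^{1/2}M(v_*)\,e^{\frac{J-I-2I_*}{2k_BT_i}}\,B(v,v_*,I,I_*,J,\sigma)\,\dd v_*\,\dd\mu^{\otimes2}(I_*,J)\,\dd\sigma,
$$
I would first freeze $(I,I_*,J)$ and regard the remaining integral over $(v_*,\sigma)\in\R^3\times\Sb^2$ as a purely velocity-type integral with weight $B_0(|v-v_*|,|\cos(\widehat{v-v_*,\sigma})|,I,I_*,J)$, which is exactly the structure of the monatomic $K_2$ up to the harmless factor $M(v_*)$ being untouched and $M(v')^{-1/2}$ depending on $v'$. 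The goal is to pass to $\eta:=v'$ as the outer integration variable. Using \eqref{eq:primevelocities}, for fixed $v$ and $\sigma$ the map $v_*\mapsto v'=\frac{v+v_*}{2}+\frac{|v-v_*|}{2}\sigma$ is the change of variables analysed in Appendix~\ref{annex:monoatomic}; I would invoke that appendix to rewrite $\dd v_*\,\dd\sigma$ in terms of $\dd v'$ and an auxiliary one-dimensional parameter. Concretely, writing $v'-v=:\frac12(v_*-v)+\frac12|v-v_*|\sigma$, one parametrizes the fiber over a fixed $\eta=v'$ by the length $r$ (related to $|v-v_*|$ or to the component of $v_*-v$ along a suitable direction) and by an angle $\theta$ running over $[0,2\pi]$; the Jacobian produces the factor $|\eta-v|^{-1}$ together with the weight $\frac{r}{\sqrt{r^2+|p|^2}}$ with $p=\eta-v$, and the arguments $\sqrt{r^2+|p|^2}$ and $\frac{|r^2-|p|^2|}{r^2+|p|^2}$ of $B_0$ are precisely $|v-v_*|$ and $|\cos(\widehat{v-v_*,\sigma})|$ expressed in the new coordinates.

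Next I would deal with the Gaussian factors. The product $M(v')^{-1/2}M(v)^{1/2}M(v_*)$ must be expanded using $|v_*|^2$ expressed through $v$, $v'$ and the geometry: from $v_*=v+v'-v'_*$ and $|v|^2+|v_*|^2=|v'|^2+|v'_*|^2$ one gets, after completing the square, an exponential that splits as $e^{-\frac{|\eta-v|^2}{8k_BT_k}}\,e^{-\frac{1}{8k_BT_k}\frac{(|\eta|^2-|v|^2)^2}{|\eta-v|^2}}$ (the part depending only on $v$ and $\eta=v'$) times a factor depending on the fiber variable $r$ and the angle $\theta$, namely $\exp\!\big(-\frac{r^2+|v|^2|\sin(\widehat{v,p})|^2}{2k_BT_k}\big)$ after integrating the angular cross term. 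The $\theta$-integral of $e^{X\cos\theta}$ over $[0,2\pi]$ is exactly $2\pi\,\mathbb{I}_0(X)$ by the definition \eqref{eqdef:bessel}, with $X=\frac{r\,|v|\,|\sin(\widehat{v,p})|}{k_BT_k}$; carrying out this $\theta$-integration collapses the fiber integral to the one-dimensional $r$-integral defining $\psi(v,p,I,I_*,J)$, and the overall numerical constant $8\pi c$ is bookkeeping from $c$ in $M$, the factor from the Jacobian, and the $2\pi$ from the Bessel integral. After this, $K_2 g(v,I)$ has become $\int_{\R^3\times(\R_+)^2} g(\eta,J)\,e^{\frac{J-I-2I_*}{2k_BT_i}}\,e^{-\frac{|\eta-v|^2}{8k_BT_k}}\,e^{-\frac{1}{8k_BT_k}\frac{(|\eta|^2-|v|^2)^2}{|\eta-v|^2}}\,|\eta-v|^{-1}\,\psi(v,\eta-v,I,I_*,J)\,\dd\eta\,\dd\mu^{\otimes2}(I_*,J)$, and integrating the $I_*$ variable into the definition of $\kappa_2$ as in \eqref{eq:forme1kappa2} gives exactly the claimed kernel representation.

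The main obstacle is the bookkeeping of the velocity change of variables: correctly identifying the fiber parametrization from Appendix~\ref{annex:monoatomic}, tracking the Jacobian so that the weights $|\eta-v|^{-1}$ and $\frac{r}{\sqrt{r^2+|p|^2}}$ come out right, and verifying that the arguments fed into $B_0$ really are $|v-v_*|$ and $|\cos(\widehat{v-v_*,\sigma})|$ in the new coordinates. Closely tied to this is the algebra of completing the square in the exponent $-\frac{1}{2k_BT_k}(|v'|^2 + |v_*|^2 - |v|^2)$ — rewritten using momentum conservation $v_*=v+\eta-v'_*$ and energy conservation — so that it separates cleanly into the $(v,\eta)$-only piece times a piece that is quadratic-plus-linear in the fiber direction, the linear part being what generates the Bessel function after the $\theta$-integration. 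Once these two computations are pinned down, the rest (Fubini to move $g(\eta,J)$ outside, absorbing constants, and folding the $I_*$-integral into $\kappa_2$) is routine; one should only note in passing that all manipulations are justified because the integrand is nonnegative up to the fixed sign of $g$, or alternatively for $g$ in a dense class, the full statement then following by density once the kernel bounds of the subsequent lemmas are available.
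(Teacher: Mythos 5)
Your proposal is correct and takes essentially the same route as the paper: freeze $I$, $I_*$, $J$ as parameters, recognize the inner $(v_*,\sigma)$-integral as the monatomic operator $K_2^m$ with $h=g(\cdot,J)$ and $B^m=B_0(\cdot,\cdot,I,I_*,J)$, apply the kernel form of Proposition~\ref{proposition:k1kernel} (the $(z,A)$ change of variables, Gaussian splitting and Bessel integration you sketch), and fold the $I_*$-integral into $\kappa_2$. The only differences are cosmetic: the paper cites Proposition~\ref{proposition:k1kernel} as a black box instead of re-deriving it, and the exponent you quote as $-\tfrac{1}{2k_BT_k}\left(|v'|^2+|v_*|^2-|v|^2\right)$ should read $\tfrac{1}{4k_BT_k}\left(|v'|^2-|v|^2\right)-\tfrac{1}{2k_BT_k}|v_*|^2$, a harmless bookkeeping slip that does not affect the argument.
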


\smallskip

\begin{proof}
The proof is completely similar to the one presentend in Appendix~\ref{annex:monoatomic}, with $I$, $I_*$ and $J$ treated as parameters. Indeed, we first notice that
\begin{multline*}
K_2  g (v,I)= \iint_{(\R^+)^2} e^{\frac{J-I - 2 I_*}{2 k_B T_i}}  \left[\iint_{\R^3 \times \Sb^2}   \, g(v',J) \, M(v')^{-1/2} M(v)^{1/2} M(v_*) \right. \\
\left.\phantom{\int}B_0 \left(|v-v_*|,\left|\cos\left( \widehat{v-v_*,\sigma} \right) \right|,I,I_*,J \right)  \, \dd v_*   \, \dd \sigma \right]\, \dd \mu^{\otimes 2}(I_*,J).
\end{multline*}
In the integral in $v_*$ and $\sigma$, the variables $I$, $I_*$ and $J$ are fixed parameters, so we can rewrite that integral as
$$
\iint_{\R^3 \times \Sb^2}   \,h(v') \, M(v')^{-1/2} M(v)^{1/2} M(v_*) \,  B^m \left(|v-v_*|,\left|\cos\left( \widehat{v-v_*,\sigma} \right) \right| \right)  \, \dd v_*   \, \dd \sigma,
$$
which corresponds to the monatomic operator $K_2^m$ defined in \eqref{eqdef:K2m}, with
$$
h = g(\cdot,J),\qquad B^m = B_0 \left(\cdot,\cdot,I,I_*,J \right).
$$
Applying Proposition \ref{proposition:k1kernel} yields the required result.
\end{proof}

The following lemma allows to upper-bound the kernel $\kappa_2$ by a tensorized product of two kernels, one dealing with the velocities, one with the internal energies.

\begin{lemma} \label{lem:kappaboundkappakkappai}
Let us define for $v$, $p$, $\eta \in \R^3$
\begin{align}
\psi_k(v,p) &= 8 \pi \, c \, \int_{\R_+}  \exp\left( -\frac{r^2 + |v|^2|\sin(\widehat{v,p})|^2}{2 k_B T_k} \right) \, \mathbb{I}_0 \left( \frac{r \,  |v| \, |\sin(\widehat{v,p})| }{ k_B T_k} \right) \nonumber \\ &\hspace{75pt} \times b_k\left( \sqrt{r^2 + |p|^2},\frac{|r^2 - |p|^2|}{r^2 + |p|^2} \right) \, \frac{r}{\sqrt{r^2 + |p|^2}}     \,  \ddd r, \label{eqdef:psik}\\
\kappa_k(v,\eta) &= e^{ -\frac{ |\eta - v|^2}{8 k_B T_k}} \, e^{ -\frac{ 1}{8 k_B T_k} \, \frac{(|\eta|^2 - |v|^2)^2}{|\eta - v|^2} }  \, |\eta - v|^{-1} \, \psi_k(v,\eta-v), \label{eqdef:kappak}
\end{align}
and, for $I,J \in \R_+$,
\begin{equation} \label{eqdef:kappai}
\kappa_i(I,J) = \int_{\R_+} \mathbf{1}_{J \leq I+I_*} \, e^{\frac{J-I - 2 I_*}{2 k_B T_i}} \,b_i(I,I_*) \, \ddd \mu(I_*) = \int_{(J-I)_+}^{\infty}  e^{\frac{J-I - 2 I_*}{2 k_B T_i}} \,b_i(I,I_*) \, \ddd \mu(I_*).
\end{equation}
Then
\begin{equation} \label{eq:kappaboundkappakkappai}
\kappa_2(v,I,\eta,J) \leq \kappa_k(v,\eta) \, \kappa_i(I,J).
\end{equation}
\end{lemma}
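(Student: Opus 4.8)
The plan is to prove the pointwise bound \eqref{eq:kappaboundkappakkappai} by a direct comparison, exploiting the fact that the assumption \eqref{eq:B0majorbkbi} on the cross section factorizes the velocity and internal-energy dependence. Starting from the explicit formula \eqref{eq:forme1kappa2} for $\kappa_2$, the first step is to insert the bound
$$
B_0\left( \sqrt{r^2 + |p|^2},\frac{|r^2 - |p|^2|}{r^2 + |p|^2},I,I_*,J \right) \leq b_k\left( \sqrt{r^2 + |p|^2},\frac{|r^2 - |p|^2|}{r^2 + |p|^2} \right) \, b_i(I,I_*) \, \mathbf{1}_{J \leq I+I_*}
$$
into the integrand defining $\psi(v,p,I,I_*,J)$. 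Since $b_i(I,I_*)\,\mathbf{1}_{J\leq I+I_*}$ does not depend on $r$, it can be pulled out of the $r$-integral, yielding $\psi(v,p,I,I_*,J) \leq b_i(I,I_*)\,\mathbf{1}_{J\leq I+I_*}\,\psi_k(v,p)$, with $\psi_k$ as in \eqref{eqdef:psik}. This uses that all the remaining factors in the integrand — the Gaussian $\exp(-(r^2+|v|^2|\sin(\widehat{v,p})|^2)/(2k_BT_k))$, the modified Bessel factor $\mathbb{I}_0$, and the Jacobian-type factor $r/\sqrt{r^2+|p|^2}$ — are nonnegative, so the inequality is preserved under integration in $r$.

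The second step is to substitute this into \eqref{eq:forme1kappa2}. With $p = \eta - v$, the prefactors $e^{-|\eta-v|^2/(8k_BT_k)}$, $e^{-(|\eta|^2-|v|^2)^2/(8k_BT_k|\eta-v|^2)}$ and $|\eta-v|^{-1}$ depend only on $v,\eta$ and are nonnegative, while $\psi_k(v,\eta-v)$ depends only on $v,\eta$; collecting these reproduces exactly $\kappa_k(v,\eta)$ from \eqref{eqdef:kappak}. What remains under the integral sign in $I_*$ is $e^{(J-I-2I_*)/(2k_BT_i)}\,b_i(I,I_*)\,\mathbf{1}_{J\leq I+I_*}$, so that
$$
\kappa_2(v,I,\eta,J) \leq \kappa_k(v,\eta)\,\int_{\R_+} \mathbf{1}_{J\leq I+I_*}\,e^{\frac{J-I-2I_*}{2k_BT_i}}\,b_i(I,I_*)\,\ddd\mu(I_*) = \kappa_k(v,\eta)\,\kappa_i(I,J),
$$
which is precisely \eqref{eq:kappaboundkappakkappai}, recalling the definition \eqref{eqdef:kappai} and that the indicator $\mathbf{1}_{J\leq I+I_*}$ restricts the $I_*$-integration to $I_* \geq (J-I)_+$.

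I do not expect any serious obstacle here: the lemma is essentially a bookkeeping statement that the factorized structure of $B$ in \eqref{eq:B0majorbkbi} propagates, through the explicit integral representation of $\kappa_2$, into a factorized bound $\kappa_k \otimes \kappa_i$. The only point requiring a modicum of care is the sign of each factor being carried along — in particular that $\mathbb{I}_0 \geq 0$ on $\R$ (immediate from its integral definition \eqref{eqdef:bessel}) and that $b_k, b_i \geq 0$ — so that the inequality survives every integration. One should also note that $b_i$ is only assumed measurable and nonnegative (with the upper bound \eqref{eq:bimajor} not even needed at this stage), so the manipulations are legitimate as soon as the integrals are interpreted in $[0,+\infty]$; finiteness of $\kappa_i$ is not claimed by this lemma and will be addressed separately when the compactness of $K_2$ is established.
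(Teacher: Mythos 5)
Your proposal is correct and follows essentially the same route as the paper: bound $B_0$ by $b_k\,b_i\,\mathbf{1}_{J\le I+I_*}$ inside the $r$-integral defining $\psi$ to get $\psi \le \psi_k\, b_i\,\mathbf{1}_{J\le I+I_*}$, then factor the velocity prefactors into $\kappa_k$ and the remaining $I_*$-integral into $\kappa_i$. Your extra remarks on nonnegativity of the factors and on interpreting the integrals in $[0,+\infty]$ are harmless elaborations of the same argument.
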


\begin{proof}
From \eqref{eq:B0majorbkbi}, we have
\begin{align*}
\psi(v,p,I,I_*,J) &\leq 8 \pi \, c \, \int_{\R_+}  \exp\left( -\frac{r^2 + |v|^2\sin(\widehat{v,p})^2}{2 k_B T_k} \right) \, \mathbb{I}_0 \left( \frac{r \,  |v| \, |\sin(\widehat{v,p})| }{ k_B T_k} \right) \\&\hspace{50pt}b_k\left( \sqrt{r^2 + |p|^2},\frac{|r^2 - |p|^2|}{r^2 + |p|^2} \right) \, \frac{r}{\sqrt{r^2 + |p|^2}}  \, b_i(I,I_*) \,  \mathbf{1}_{J \leq I+I_*} \, \dd r \\
&\leq   \psi_k(v,p) \, b_i(I,I_*)\, \mathbf{1}_{J \leq I+I_*}.
\end{align*}
Hence, we recover \eqref{eq:kappaboundkappakkappai}.
\end{proof}

As already stated, \eqref{eq:kappaboundkappakkappai} allows to separate the velocity part from the internal energy one. While a specific focus on the velocity part, corresponding to the monatomic case, is performed in Appendix~\ref{annex:monoatomic}, we now deal, in the following, with $\kappa_i$, the kernel dedicated to the internal energy part.

\begin{lemma} \label{lemma:boundb1}
Let us fix some $a \in \left(0, 1- \frac{\gamma}{2} \right)$. Then there exists $C > 0$ such that, if $I+I_* \leq 1$,
\begin{equation} \label{eq:lemmaboundbi1}
b_i(I,I_*)\leq C \, (I + I_*)^{\frac{\gamma}{2} - \beta_1 - 1},
\end{equation}
if $I+I_* \geq 1$,
\begin{equation} \label{eq:lemmaboundbi2}
b_i(I,I_*) \leq C \, (I + I_*)^{\frac{\gamma}{2} - \beta_2 + a - 1},
\end{equation}
\end{lemma}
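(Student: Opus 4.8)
The plan is to reduce everything to a power-law lower bound on $\mu[0,x]$, where I write $x = I + I_*$. From Assumption \eqref{eq:bimajor} we have $b_i(I,I_*) \leq C \, x^{\gamma/2}/\mu[0,x]$, so it suffices to bound $\mu[0,x]$ from below by a suitable power of $x$, separately in the regimes $x \leq 1$ and $x \geq 1$, and then substitute.

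For the small-energy regime $x \leq 1$, I would simply integrate the lower bound of \eqref{eqassumpmu1}:
$\mu[0,x] = \int_0^x \frac{\dd\mu}{\dd I}(s)\,\dd s \geq C \int_0^x s^{\beta_1}\,\dd s = \frac{C}{\beta_1+1}\,x^{\beta_1+1}$,
which is finite and legitimate since $\beta_1 \geq 0$. Plugging this into \eqref{eq:bimajor} gives $b_i(I,I_*) \leq C'\,x^{\gamma/2-\beta_1-1}$, that is, \eqref{eq:lemmaboundbi1}.

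For the large-energy regime $x \geq 1$, I would write $\mu[0,x] \geq \mu[0,1] + \int_1^x \frac{\dd\mu}{\dd I}(s)\,\dd s$ and apply the lower bound of \eqref{eqassumpmu2}, getting $\mu[0,x] \geq \mu[0,1] + C_a \int_1^x s^{\beta_2-a}\,\dd s$. Since $a < 1-\tfrac{\gamma}{2} \leq 1 \leq \beta_2+1$, the exponent satisfies $\beta_2-a+1>0$, so $\int_1^x s^{\beta_2-a}\,\dd s = \frac{x^{\beta_2-a+1}-1}{\beta_2-a+1}$. The only point needing a line of care is to convert $\mu[0,1] + \frac{C_a}{\beta_2-a+1}\bigl(x^{\beta_2-a+1}-1\bigr)$ into a lower bound of the form (constant)$\,\times x^{\beta_2-a+1}$ valid uniformly on $[1,\infty)$: this follows because the lower bound in \eqref{eqassumpmu2} still holds with $C_a$ replaced by any smaller positive constant, so after shrinking $C_a$ so that $C_a \leq (\beta_2-a+1)\,\mu[0,1]$ one obtains $\mu[0,x] \geq \frac{C_a}{\beta_2-a+1}\,x^{\beta_2-a+1}$; alternatively one splits $[1,\infty)$ into the compact set where $x^{\beta_2-a+1}\leq 2$, on which $\mu[0,x]\geq\mu[0,1]>0$ and $x$ is bounded, and the set where the $-1$ is absorbed into $\tfrac12 x^{\beta_2-a+1}$. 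Substituting into \eqref{eq:bimajor} then yields $b_i(I,I_*)\leq C'\,x^{\gamma/2-\beta_2+a-1}$, i.e. \eqref{eq:lemmaboundbi2}.

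There is no genuine obstacle here: the statement is essentially an exercise in integrating the density estimates of Definition~\ref{def:adm}. The one spot where I would be slightly careful is precisely the uniformity of the multiplicative constant in the regime $x\geq 1$, handled as above by either shrinking $C_a$ or a short compactness argument near $x=1$.
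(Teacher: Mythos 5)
Your proposal is correct and follows essentially the same route as the paper: bound $\mu[0,I+I_*]$ from below by integrating the density estimates \eqref{eqassumpmu1}--\eqref{eqassumpmu2} in the two regimes and plug into \eqref{eq:bimajor}; the paper absorbs the constant near $I+I_*=1$ by keeping the explicit term $C\int_0^1 J^{\beta_1}\,\dd J$ (which also makes the positivity of $\mu[0,1]$ you implicitly use transparent), while you handle the same point by shrinking $C_a$. No gap.
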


\begin{proof}
We recall that, thanks to Assumption \eqref{eq:bimajor},
$$
b_i(I,I_*) \leq C \frac{(I+I_*)^{\gamma / 2}}{\mu[0,I+I_*]}.
$$
From Assumption \eqref{eqassumpmu1}, we have, for almost every $I$, $I_* \geq 0$ such that $I+I_* \leq 1$,
$$
\mu[0,I+I_*] = \int_{0}^{I+I_*} \dd \mu(J) \geq C \, \int_{0}^{I+I_*} J^{\beta_1} \dd J = C \, (I+I_*)^{\beta_1 + 1},
$$
so that, if $I+I_* \leq 1$,
$$
b_i(I,I_*) \leq C (I+I_*)^{\gamma / 2 - \beta_1 - 1}.
$$
On the other hand, from Assumption \eqref{eqassumpmu2}, we have, for almost every $I$, $I_* \geq 0$ such that $I+I_* \geq 1$,
$$
\mu([0,I+I_*]) \geq C \int_{0}^{1} J^{\beta_1} \, \dd J + C \int_{1}^{I+I_*} J^{\beta_2 - a} \, \dd J \geq  C' \, (I + I_*)^{\beta_2 - a + 1},
$$
so that, if $I+I_* \geq 1$,
$$
b_i(I,I_*) \leq C (I+I_*)^{\gamma / 2 - \beta_2 + a - 1}.
$$
\end{proof}

Lemma \ref{lemma:boundb1} paves the way to obtain relevant upper estimates on $\kappa_i$, as explained in the following lemma.

\begin{lemma} \label{lemma:boundkappai}
When $\gamma > 0$, there exists $C > 0$ such that, for almost every $I$, $J \geq 0$,
\begin{equation} \label{eq:lemmaboundkappaigammast0}
\kappa_i(I,J) \leq C \, e^{-\frac{|J-I|}{4 k_B T_i}} \, (1 + I)^{\frac{\gamma}{2} - \beta_2 +a - 1}.
\end{equation}
On the other hand, when $\gamma = 0$, for any $\alpha \in \left(0,1 - a \right)$, there exists $C_{\alpha} > 0$ such that, for almost every $I$, $J \geq 0$,
\begin{equation} \label{eq:lemmaboundkappaigamma0}
\kappa_i(I,J) \leq C_{\alpha} \, e^{-\frac{|J-I|}{4 k_B T_i}} \, I^{-\alpha} \, (1 + I)^{\alpha - \beta_2 + a - 1}.
\end{equation}
\end{lemma}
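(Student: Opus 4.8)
The plan is to first rewrite the exponential weight inside $\kappa_i$ so that the dependence on $J$ comes out as a genuine decaying exponential in $|J-I|$, and then to estimate what is left using Lemma~\ref{lemma:boundb1} together with the density bounds \eqref{eqassumpmu1}--\eqref{eqassumpmu2}. Write $\tau=(J-I)_+$. By the elementary identity $x+|x|=2x_+$, on the integration domain $I_*\ge\tau$ one has $\frac{J-I-2I_*}{2k_BT_i}=-\frac{|J-I|}{2k_BT_i}-\frac{I_*-\tau}{k_BT_i}$, so that, after the substitution $s=I_*-\tau$,
$$
\kappa_i(I,J)=e^{-\frac{|J-I|}{2k_BT_i}}\,H(I,\tau),\qquad H(I,\tau):=\int_{\R_+}e^{-\frac{s}{k_BT_i}}\,b_i(I,\tau+s)\,\ddd\mu(\tau+s).
$$
It is then enough to bound $H(I,\tau)$ by $C\,(1+\tau)^{\max(\beta_1,\beta_2)}$ times the right-hand factor in $I$ appearing in \eqref{eq:lemmaboundkappaigammast0} or \eqref{eq:lemmaboundkappaigamma0}: when $\tau>0$ one has $\tau=|J-I|$ and $(1+|J-I|)^{\max(\beta_1,\beta_2)}e^{-\frac{|J-I|}{2k_BT_i}}\le C\,e^{-\frac{|J-I|}{4k_BT_i}}$, and when $\tau=0$ one uses directly $e^{-\frac{|J-I|}{2k_BT_i}}\le e^{-\frac{|J-I|}{4k_BT_i}}$.

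To estimate $H(I,\tau)$ I would insert the bounds of Lemma~\ref{lemma:boundb1} and $\frac{\ddd\mu}{\ddd I_*}(I_*)\le C'\bigl(I_*^{\beta_1}\mathbf{1}_{I_*\le 1}+I_*^{\beta_2}\mathbf{1}_{I_*>1}\bigr)$, and split the integral according to whether $I+\tau+s\le 1$ or $I+\tau+s>1$. On the outer part $\{I+\tau+s>1\}$, Lemma~\ref{lemma:boundb1} gives $b_i(I,\tau+s)\le C\,(I+\tau+s)^{\gamma/2-\beta_2+a-1}$; since $\gamma/2+a-1<0$ and $\beta_2\ge0$ this exponent is negative, hence $(I+\tau+s)^{\gamma/2-\beta_2+a-1}\le C\,(1+I)^{\gamma/2-\beta_2+a-1}$ uniformly in $s$, and $\int_{\R_+}e^{-s/(k_BT_i)}(1+\tau+s)^{\max(\beta_1,\beta_2)}\,\ddd s\le C\,(1+\tau)^{\max(\beta_1,\beta_2)}$; this already produces the announced factor in $I$, and it is the whole estimate when $I\ge1$. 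On the inner part $\{I+\tau+s\le1\}$ --- which forces $I\le1$ and $\tau\le1$, so that the prefactors in $I$ and in $\tau$ are both bounded above and below there --- one gets $b_i(I,\tau+s)\,\frac{\ddd\mu}{\ddd I_*}(\tau+s)\le C\,(\tau+s)^{\gamma/2-\beta_1-1}(\tau+s)^{\beta_1}=C\,(\tau+s)^{\gamma/2-1}$; when $\gamma>0$ this is integrable near $0$ and contributes only a bounded term, which completes the proof of \eqref{eq:lemmaboundkappaigammast0}.

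The only genuine difficulty is the case $\gamma=0$, where the inner integrand degenerates to $(\tau+s)^{-1}$, no longer integrable --- and this is precisely what the extra exponent $\alpha$ repairs. On $\{I+\tau+s\le1\}$ I would instead write $(I+\tau+s)^{-\beta_1-1}\le I^{-\alpha}(I+\tau+s)^{\alpha-\beta_1-1}\le I^{-\alpha}(\tau+s)^{\alpha-\beta_1-1}$, which is legitimate since $0<\alpha<1-a\le1\le1+\beta_1$; the inner integrand is then bounded by $C\,I^{-\alpha}(\tau+s)^{\alpha-1}$, integrable near $0$, and integration yields a term $C_\alpha\,I^{-\alpha}$, which is $\le C_\alpha\,I^{-\alpha}(1+I)^{\alpha-\beta_2+a-1}$ up to a constant on the range $I\le1$. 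Combining this with the outer part --- bounded by $C(1+\tau)^{\max(\beta_1,\beta_2)}(1+I)^{-\beta_2+a-1}\le C(1+\tau)^{\max(\beta_1,\beta_2)}I^{-\alpha}(1+I)^{\alpha-\beta_2+a-1}$ --- and then absorbing the polynomial factor $(1+\tau)^{\max(\beta_1,\beta_2)}$ into the exponential as in the first step yields \eqref{eq:lemmaboundkappaigamma0}. Throughout, the points requiring care are the bookkeeping of the exponential weights so as to keep a true decay in $|J-I|$ rather than mere boundedness, and the $I\to0$ singularity in the $\gamma=0$ case.
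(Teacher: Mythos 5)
Your proof is correct, and it reorganizes the argument in a way that differs from the paper's. You extract the exponential decay exactly at the start, via the identity $\tfrac{J-I-2I_*}{2k_BT_i}=-\tfrac{|J-I|}{2k_BT_i}-\tfrac{I_*-(J-I)_+}{k_BT_i}$ on the integration domain and the shift $s=I_*-(J-I)_+$, then split the remaining integral into the two regions $I+I_*\le 1$ and $I+I_*>1$, which matches exactly the two cases of Lemma~\ref{lemma:boundb1}, and finally absorb the polynomial factor $(1+|J-I|)^{\max(\beta_1,\beta_2)}$ into half of the exponential. The paper instead splits $\kappa_i$ into three pieces according to $I_*\ge 1$, $(1-I)_+\le I_*<1$ and $I_*<(1-I)_+$, and handles the exponential inside each piece separately (e.g.\ absorbing $I_*^{\beta_2}e^{-I_*/(k_BT_i)}$ into $e^{-3I_*/(4k_BT_i)}$ and integrating from $(J-I)_+$). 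Both proofs rest on the same ingredients --- Lemma~\ref{lemma:boundb1}, the admissibility bounds \eqref{eqassumpmu1}--\eqref{eqassumpmu2}, and, for $\gamma=0$, the splitting $(I+I_*)^{-1}\le I^{-\alpha}(I+I_*)^{\alpha-1}$ --- so the substance is the same; what your factorization buys is a cleaner bookkeeping of the decay in $|J-I|$ (one absorption at the very end rather than within each piece), while the paper's three-way split keeps every sub-estimate completely elementary and avoids carrying the $(1+\tau)^{\max(\beta_1,\beta_2)}$ factor. Two small points you leave implicit and should spell out: on the outer region one has $I+\tau+s\ge\max(1,I)\ge\tfrac{1+I}{2}$, which is what legitimizes $(I+\tau+s)^{\gamma/2-\beta_2+a-1}\le C(1+I)^{\gamma/2-\beta_2+a-1}$ for the negative exponent; and the comparison $(1+\tau+s)^{\max(\beta_1,\beta_2)}\le(1+\tau)^{\max(\beta_1,\beta_2)}(1+s)^{\max(\beta_1,\beta_2)}$ behind your integral bound in $s$.
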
 

\begin{proof}
In order to be able to use the polynomial bounds on $b_i$ and $\mu$, we divide the integral into three parts. We write
\begin{equation} \label{eq:kappaidecompose}
\kappa_i(I,J) = \mathcal{I}_1 + \mathcal{I}_2 + \mathcal{I}_3,
\end{equation}
with
\begin{align*}
     \mathcal{I}_1 &= \int_{(J-I)_+}^{\infty} \mathbf{1}_{I_* \geq 1} \,  e^{\frac{J-I - 2 I_*}{2 k_B T_i}} \,b_i(I,I_*) \, \dd \mu(I_*),\\
     \mathcal{I}_2 &= \int_{(J-I)_+}^{\infty} \mathbf{1}_{(1-I)_+ \leq I_* < 1} \,  e^{\frac{J-I - 2 I_*}{2 k_B T_i}} \,b_i(I,I_*) \, \dd \mu(I_*),\\
     \mathcal{I}_3 &= \int_{(J-I)_+}^{\infty} \mathbf{1}_{I_* < (1-I)_+} \,  e^{\frac{J-I - 2 I_*}{2 k_B T_i}} \,b_i(I,I_*) \, \dd \mu(I_*).
\end{align*}
Using the bounds \eqref{eq:lemmaboundbi2} on $b_i$ and \eqref{eqassumpmu2} on $\mu$ for $\mathcal{I}_1$, \eqref{eq:lemmaboundbi2} and \eqref{eqassumpmu1} for $\mathcal{I}_2$ and \eqref{eq:lemmaboundbi1} and \eqref{eqassumpmu1} for $\mathcal{I}_3$, we have
\begin{align*}
     \mathcal{I}_1 &\leq C \, \int_{(J-I)_+}^{\infty} \mathbf{1}_{I_* \geq 1} \,  e^{\frac{J-I - 2 I_*}{2 k_B T_i}} \,(I + I_*)^{\frac{\gamma}{2} - \beta_2 + a - 1} \, I_*^{\beta_2} \, \dd I_*,\\
     \mathcal{I}_2 &\leq C \, \int_{(J-I)_+}^{\infty} \mathbf{1}_{(1-I)_+ \leq I_* < 1} \,  e^{\frac{J-I - 2 I_*}{2 k_B T_i}} \,\,(I + I_*)^{\frac{\gamma}{2} - \beta_2 + a - 1} \, I_*^{\beta_1} \, \dd I_*,\\
     \mathcal{I}_3 &\leq C \, \int_{(J-I)_+}^{\infty} \mathbf{1}_{I_* < (1-I)_+} \,  e^{\frac{J-I - 2 I_*}{2 k_B T_i}} \,(I + I_*)^{\frac{\gamma}{2} - \beta_1 - 1} \, I_*^{\beta_1} \, \dd I_*.
\end{align*}
We first focus on $\mathcal{I}_1$. Since $\frac{\gamma}{2} - \beta_2 + a - 1 \leq 0$, we have, for all $I_* \geq 1$, that $(I + I_*)^{\frac{\gamma}{2} - \beta_2 + a - 1} \leq (1 + I)^{\frac{\gamma}{2} - \beta_2 + a - 1}$, thus we immediately have
\begin{equation} \label{eq:bound1I1}
\mathcal{I}_1 \leq C \, (1 + I)^{\frac{\gamma}{2} - \beta_2 + a - 1} \, \int_{(J-I)_+}^{\infty} \mathbf{1}_{I_* \geq 1} \,  e^{\frac{J-I - 2 I_*}{2 k_B T_i}} \, I_*^{\beta_2} \, \dd I_*.
\end{equation}
Note that there exists $C > 0$ such that, for all $I_* \geq 1$,
$$
e^{-\frac{ I_*}{k_B T_i}} \, I_*^{\beta_2} \leq C \, e^{-\frac{ 3 I_*}{4 k_B T_i}}.
$$
Hence, the integral in \eqref{eq:bound1I1} is upper-bounded by
$$
C \, e^{\frac{J-I}{2 k_B T_i}} \, \int_{(J-I)_+}^{\infty}  e^{-\frac{3 I_*}{4 k_B T_i}} \, \dd I_* \leq  C \,e^{\frac{J-I}{2 k_B T_i}} \, e^{-\frac{3 (J-I)_+ }{4 k_B T_i}} \leq C \, e^{-\frac{|J-I|}{4 k_B T_i}}.
$$
Thus, for all $I$, $J \geq 0$,
\begin{equation} \label{eq:boundfinalI1}
\mathcal{I}_1 \leq C \, (1 + I)^{\frac{\gamma}{2} - \beta_2 + a - 1} \, e^{-\frac{|J-I|}{4 k_B T_i}}.
\end{equation}
We now focus on $\mathcal{I}_2$. Since $\beta_1 \geq 0$,  $I_*^{\beta_1} \leq 1$ when $I_* \leq 1$. Moreover, since $\frac{\gamma}{2} - \beta_2 + a - 1 \leq 0$, we also have $(I_*+I)^{\frac{\gamma}{2} - \beta_2 + a - 1} \leq I^{\frac{\gamma}{2} - \beta_2 + a - 1}$. Then
\begin{equation}  \label{eq:bound1I2}
\mathcal{I}_2 \leq C \, I^{\frac{\gamma}{2} - \beta_2 + a - 1} \, \int_{(J-I)_+}^{\infty}   e^{\frac{J-I - 2 I_*}{2 k_B T_i}}  \, \dd I_* \leq C \, I^{\frac{\gamma}{2} - \beta_2 + a - 1} \,
e^{-\frac{|J-I|}{4 k_B T_i}}.
\end{equation}
Moreover, for all $I \in [0,1)$,
\begin{align}
\mathcal{I}_2 &\leq C \, \int_{(J-I)_+}^{\infty} \mathbf{1}_{1-I \leq I_* < 1} \,  e^{\frac{J-I - 2 I_*}{2 k_B T_i}} \,\,(I + I_*)^{\frac{\gamma}{2} - \beta_2 + a - 1} \, I_*^{\beta_1} \, \dd I_* \nonumber \\
&\leq C \, \int_{(J-I)_+}^{\infty} \mathbf{1}_{1 \leq I + I_*} \,  e^{\frac{J-I - 2 I_*}{2 k_B T_i}} \,(I + I_*)^{\frac{\gamma}{2} - \beta_2 + a - 1} \, I_*^{\beta_1} \, \dd I_*  \nonumber \\
&\leq C \, \int_{(J-I)_+}^{\infty}  e^{\frac{J-I - 2 I_*}{2 k_B T_i}} \, I_*^{\beta_1} \, \dd I_* \leq C' \, e^{-\frac{|J-I|}{4 k_B T_i}}. \label{eq:bound2I2}
\end{align}
Combining \eqref{eq:bound1I2}--\eqref{eq:bound2I2} yields, for all $I$, $J \geq 0$,
\begin{equation} \label{eq;boundfinalI2}
\mathcal{I}_2 \leq C \, (1 + I)^{\frac{\gamma}{2} - \beta_2 + a - 1} \, e^{-\frac{|J-I|}{4 k_B T_i}}.
\end{equation}
Finally, let us deal with $\mathcal{I}_3$. We note that $\mathcal{I}_3$ vanishes when $I \geq 1$ or when we simultaneously have $I < 1$ and $J \geq 1$. When $I < 1$ and $J < 1$, since $(I + I_*)^{-\beta_1} \, I_*^{\beta_1} \leq 1$ and $J-I - 2 I_* \leq 0$ when $I_* \geq (J-I)_+$,
\begin{align*}
    \mathcal{I}_3 \leq C \, \int_{(J-I)_+}^{\infty} \mathbf{1}_{I_* + I < 1} \, (I+I_*)^{\frac{\gamma}{2}  - 1} \, \dd I_*.
\end{align*}
In order to conclude, we have to discuss the cases $\gamma > 0$ and $\gamma = 0$ separately.

\medskip

\noindent \emph{Case $\gamma > 0$.} Since $ -1 < \gamma / 2 - 1 < 0$, we have $(I+I_*)^{\gamma/2 - 1} \leq I_*^{\gamma/2 - 1}$ and
\begin{equation} \label{eq:bound1I3}
\mathcal{I}_3 \leq C \int_{0}^1 I_*^{\gamma/2 - 1} \, \dd I_* = C.
\end{equation}
\smallskip

\noindent \emph{Case $\gamma = 0$.} For any $\alpha \in \left(0,1 - a \right)$ we have $(I+I_*)^{-1} \leq I^{-\alpha} \, I_*^{\alpha - 1}$ and
\begin{equation} \label{eq:bound2I3}
\mathcal{I}_3 \leq C  \, I^{-\alpha} \int_{0}^1 I_*^{\alpha - 1} \, \dd I_* = C_{\alpha} \, I^{-\alpha}.
\end{equation}
Taking into account \eqref{eq:boundfinalI1} and \eqref{eq;boundfinalI2}--\eqref{eq:bound2I3} in \eqref{eq:kappaidecompose} allows to end the proof.
\end{proof}

The pointwise estimates of Lemma~\ref{lemma:boundkappai} leads to $L^1$ estimates with respect to one variable, $I$ and $J$, successively.
\begin{corollary} \label{corollary:kappai}
When $\gamma > 0$, there exists $C > 0$ such that
\begin{align}
&\int_{\R_+} \kappa_i(I,J) \, \ddd \mu(J) \leq C \, (1 + I)^{\frac{\gamma}{2}+a-1}, \qquad  \text{for a.e. } I \geq 0, \label{eq:corollarykappaist01} \\
&\int_{\R_+} \kappa_i(I,J) \, \ddd \mu(I) \leq C, \qquad \qquad \qquad \qquad  \text{for a.e. } J \geq 0. \label{eq:corollarykappaist02}
\end{align}
When $\gamma = 0$, for any $\alpha \in \left(0,\frac{1-a}{2} \right)$, there exists $C_{\alpha}>0$ such that
\begin{align}
&\int_{\R_+} \kappa_i(I,J) \, \ddd \mu(J) \leq C_{\alpha} \, I^{-\alpha} \, (1 + I)^{\alpha+a-1}, \qquad   \text{for a.e. } I \geq 0, \label{eq:corollarykappai01}\\
&\int_{\R_+} I^{-\alpha} \, \kappa_i(I,J) \, \ddd \mu(I) \leq C_{\alpha},  \qquad \qquad \qquad \qquad   \text{for a.e. } J \geq 0. \label{eq:corollarykappai02}
\end{align}

\end{corollary}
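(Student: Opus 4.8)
The plan is to integrate the pointwise bounds from Lemma~\ref{lemma:boundkappai} against $\ddd\mu$ in one variable at a time, reducing everything to controlling the two one-dimensional integrals $\int_{\R_+} e^{-|J-I|/(4k_BT_i)}\,\ddd\mu(J)$ (as a function of $I$) and $\int_{\R_+} e^{-|J-I|/(4k_BT_i)}\,\phi(I)\,\ddd\mu(I)$ (as a function of $J$), where $\phi$ is the relevant power-law weight in $I$. First I would establish the auxiliary fact that there is a constant $C>0$ such that, for almost every $I\ge 0$, $\int_{\R_+} e^{-|J-I|/(4k_BT_i)}\,\ddd\mu(J)\le C\,(1+I)^{\beta_2}$: split the integral at $J=1$ and $J=2I$, use \eqref{eqassumpmu1} on $[0,1]$ and \eqref{eqassumpmu2} (with some fixed small $a'$) on $[1,\infty)$, and note that on the region $J\ge 2I$ the exponential decay dominates any polynomial, while on $J\le 2I$ one has $J^{\beta_2}\le C(1+I)^{\beta_2}$. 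The symmetric statement $\int_{\R_+} e^{-|J-I|/(4k_BT_i)}\,\ddd\mu(I)\le C\,(1+J)^{\beta_2}$ holds by the same argument.

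For $\gamma>0$ and \eqref{eq:corollarykappaist01}, I would plug \eqref{eq:lemmaboundkappaigammast0} in and pull out the factor $(1+I)^{\gamma/2-\beta_2+a-1}$, which does not depend on $J$; the remaining integral $\int_{\R_+} e^{-|J-I|/(4k_BT_i)}\,\ddd\mu(J)$ is bounded by $C(1+I)^{\beta_2}$ by the auxiliary fact, giving the exponent $\gamma/2-\beta_2+a-1+\beta_2=\gamma/2+a-1$ as claimed. For \eqref{eq:corollarykappaist02}, integrating \eqref{eq:lemmaboundkappaigammast0} in $I$ gives $C\int_{\R_+} e^{-|J-I|/(4k_BT_i)}\,(1+I)^{\gamma/2-\beta_2+a-1}\,\ddd\mu(I)$; here the weight $(1+I)^{\gamma/2-\beta_2+a-1}$ sits inside, but since $\gamma/2+a-1<0$ we may crudely bound $(1+I)^{\gamma/2-\beta_2+a-1}\le C\,I^{-\beta_2}\,(1+I)^{\gamma/2+a-1}\le C\,I^{-\beta_2}$ for $I\ge 1$ and handle $I\le 1$ with \eqref{eqassumpmu1}; combined with the auxiliary fact (the factor $I^{-\beta_2}$ exactly cancels the $(1+I)^{\beta_2}$ growth), this yields a uniform constant. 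The case $\gamma=0$ with \eqref{eq:lemmaboundkappaigamma0} is identical in spirit: for \eqref{eq:corollarykappai01} pull out $I^{-\alpha}(1+I)^{\alpha-\beta_2+a-1}$ and bound the $J$-integral by $C(1+I)^{\beta_2}$; for \eqref{eq:corollarykappai02} the extra prefactor $I^{-\alpha}$ from the statement combines with the $I^{-\alpha}$ already in \eqref{eq:lemmaboundkappaigamma0} to give $I^{-2\alpha}$, which is $\mu$-integrable near $0$ precisely because $2\alpha<1-a\le 1+\beta_1$ (using $\beta_1\ge 0$), and the large-$I$ behaviour is controlled as before.

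The main obstacle I anticipate is purely bookkeeping: making sure the power of $I$ near $0$ in \eqref{eq:corollarykappai02}, namely $-2\alpha+\beta_1$ after accounting for the density of $\mu$, is strictly greater than $-1$, which is why the hypothesis $\alpha\in(0,\frac{1-a}{2})$ (rather than $(0,1-a)$ as in Lemma~\ref{lemma:boundkappai}) is imposed — the factor of $2$ in the denominator is exactly what is needed so that two copies of $I^{-\alpha}$ remain integrable. A secondary point requiring care is that the weight $(1+I)^{\gamma/2-\beta_2+a-1}$ (resp. $(1+I)^{\alpha-\beta_2+a-1}$) may blow up at $I=0$ when $\beta_2$ is large, so in the estimates \eqref{eq:corollarykappaist02} and \eqref{eq:corollarykappai02} one should use the small-$I$ behaviour of $\mu$ from \eqref{eqassumpmu1} to absorb it; this is harmless since near $0$ the exponential is bounded and the integration region is finite.
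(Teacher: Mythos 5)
Your proposal is correct, and it follows the same overall plan as the paper: plug the pointwise bounds \eqref{eq:lemmaboundkappaigammast0} and \eqref{eq:lemmaboundkappaigamma0} of Lemma \ref{lemma:boundkappai} into the integrals, split the integration at $1$, and use \eqref{eqassumpmu1}--\eqref{eqassumpmu2} to control the density of $\mu$. The only genuine difference is how the $\beta_2$-bookkeeping is organized: the paper invokes Corollary \ref{corollary:expdeltafrac} with $r=-\beta_2$ and $s=\frac{1}{8 k_B T_i}$, spending half of the exponential decay to convert the factor $(1+I)^{-\beta_2}$ into $(1+J)^{-\beta_2}$, which then cancels the growth of the $\mu$-density in the integration variable; you instead prove the direct convolution-type estimate $\int_{\R_+}e^{-|J-I|/(4 k_B T_i)}\,\dd\mu(J)\le C\,(1+I)^{\beta_2}$ (and its symmetric analogue) and cancel the density against $I^{-\beta_2}$ by hand in the $I$-integrals. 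Both devices encode the same fact and yield the same exponents, so your route is a legitimate, marginally more self-contained variant that bypasses Lemma \ref{lemma:expdeltafrac}. One small misstatement, which is not a gap: the weight $(1+I)^{\gamma/2-\beta_2+a-1}$ (resp.\ $(1+I)^{\alpha-\beta_2+a-1}$) never blows up at $I=0$, since its exponent is nonpositive it is bounded by $1$ on all of $\R_+$; what is singular at $0$ is the intermediate bound $I^{-\beta_2}$ that you introduce for $I\ge 1$, and your splitting at $I=1$ already avoids any issue there. Your check that $2\alpha<1-a$ ensures $\mu$-integrability of $I^{-2\alpha}$ near $0$ (because $-2\alpha+\beta_1>-1$) matches exactly the role this condition plays in the paper's computation.
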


\begin{proof}
Again we have to distinguish the cases $\gamma > 0$ and $\gamma = 0$.

\medskip

\noindent \emph{Case $\gamma > 0$.} We combine the bound \eqref{eq:lemmaboundkappaigammast0} on $\kappa_i$  given in Lemma \ref{lemma:boundkappai}, and \eqref{eq:expdeltafrac} in Corollary \ref{corollary:expdeltafrac} with $r = -\beta_2$ and $s = \frac{1}{8 k_B T_i}$, to obtain
$$
\kappa_i(I,J) \leq C \,  e^{-\frac{|J-I|}{4 k_B T_i}} \, (1 + I)^{\frac{\gamma}{2} - \beta_2 + a - 1} \leq C \, e^{-\frac{|J-I|}{8 k_B T_i}} \, (1 + I)^{\frac{\gamma}{2} + a - 1} \, (1 + J)^{-\beta_2}.
$$
Using the hypotheses \eqref{eqassumpmu1}--\eqref{eqassumpmu2} on $\mu$ and noticing that, for all $J \in [0,1]$, $(1 + J)^{-\beta_2} \, J^{\beta_1} \leq 1$ and that, for all $J \geq 1$, $(1 + J)^{-\beta_2} \, J^{\beta_2} \leq 1$,
\begin{align*}
 \int_{\R_+} \kappa_i(I,J) \, \dd \mu(J) &\leq C \, \int_{1}^{\infty} e^{-\frac{|J-I|}{8 k_B T_i}} \, (1 + I)^{\frac{\gamma}{2} + a - 1} \, (1 + J)^{-\beta_2} \, J^{\beta_2} \, \dd J \\
 &+ C \, \int_0^1 e^{-\frac{|J-I|}{8 k_B T_i}} \, (1 + I)^{\frac{\gamma}{2} + a - 1} \, (1 + J)^{-\beta_2} \, J^{\beta_1} \, \dd J \\
 &\leq C \, (1 + I)^{\frac{\gamma}{2} + a - 1} \, \int_{\R_+} e^{-\frac{|J-I|}{8 k_B T_i}} \, \dd J \leq  C \, (1 + I)^{\frac{\gamma}{2} + a - 1} \, \int_{\R} e^{-\frac{|J|}{8 k_B T_i}} \, \dd J \\
 &\leq C \, (1 + I)^{\frac{\gamma}{2} + a - 1}.
\end{align*}
For the other inequality, from \eqref{eq:lemmaboundkappaigammast0} we have
$$
\kappa_i(I,J)  \leq C \, e^{-\frac{|J-I|}{8 k_B T_i}} \, (1 + I)^{\frac{\gamma}{2} - \beta_2 + a - 1} \leq C \, e^{-\frac{|J-I|}{8 k_B T_i}}  \, (1 + I)^{-\beta_2},
$$
because $\frac{\gamma}{2} + a - 1 \leq 0$. It comes that, again from \eqref{eqassumpmu1}--\eqref{eqassumpmu2},
\begin{align*}
 \int_{\R_+} \kappa_i(I,J) \, \dd \mu(I) &\leq C \left(  \int_0^1 e^{-\frac{|J-I|}{8 k_B T_i}} \, (1 + I)^{-\beta_2} \, I^{\beta_1} \, \dd I + \int_{1}^{\infty} e^{-\frac{|J-I|}{8 k_B T_i}} \, (1 + I)^{-\beta_2} \, I^{\beta_2} \, \dd I\right) \\
 &\leq  C \, \int_{\R} e^{-\frac{|I|}{8 k_B T_i}} \, \dd I,
\end{align*}
which does not depend on $J$.

\medskip

\noindent \emph{Case $\gamma = 0$.}  We combine the bound \eqref{eq:lemmaboundkappaigamma0} on $\kappa_i$  given in Lemma \ref{lemma:boundkappai}, and \eqref{eq:expdeltafrac} with $r = -\beta_2$ and $s = \frac{1}{8 k_B T_i}$, to obtain that, for any $\alpha \in \left(0,\frac{1-a}{2} \right)$,
$$
\kappa_i(I,J)  \leq C \,  e^{-\frac{|J-I|}{4 k_B T_i}} \, I^{-\alpha} \, (1 + I)^{\alpha - \beta_2 + a - 1} \leq C\, e^{-\frac{|J-I|}{8 k_B T_i}} \, I^{-\alpha}  \, (1 + I)^{\alpha + a - 1} \, (1 + J)^{-\beta_2}, \; \; \; \text{for a.e. } I, J \geq 0,
$$
from which we get, with the same computations as in the case $\gamma > 0$,
$$
\int_{\R_+} \kappa_i(I,J) \, \dd \mu(J) \leq C_{\alpha} \, I^{-\alpha} \, (1 + I)^{\alpha + a - 1}, \qquad \text{for a.e. } I \geq 0.
$$
On the other hand, once again using Lemma \ref{lemma:expdeltafrac}, we have
$$
\kappa_i(I,J) \leq C_{\alpha} \, I^{-\alpha} \, e^{-\frac{|J-I|}{8 k_B T_i}} \, (1 + J)^{\alpha + a - 1} \, (1 + I)^{-\beta_2} \leq C_{\alpha} \, I^{-\alpha} \, e^{-\frac{|J-I|}{8 k_B T_i}} \, (1 + I)^{-\beta_2} 
$$
because $\alpha + a - 1 \leq 0$. Recalling that $-2 \alpha > -1$, we get
\begin{multline*}
 \int_{\R_+} I^{-\alpha} \,  \kappa_i(I,J) \, \dd \mu(I) \\
\leq C \, \int_1^{\infty} I^{- 2 \alpha} \, e^{-\frac{|J-I|}{8 k_B T_i}} \, (1 + I)^{-\beta_2} \, I^{\beta_2} \, \dd I + C \, \int_0^1 I^{- 2 \alpha} \, e^{-\frac{|J-I|}{8 k_B T_i}} \, (1 + I)^{-\beta_2} \, I^{\beta_1} \, \dd I \\
 \leq C \, \int_{\R_+}  I^{- 2 \alpha} \, e^{-\frac{|I-J|}{8 k_B T_i}} \, \dd I \leq C \, \left(\int_{0}^1  I^{- 2 \alpha}  \dd I + \int_{1}^{\infty}  e^{-\frac{|I-J|}{8 k_B T_i}} \, \dd I \right) \\
\leq C \, \left(\frac{1}{1-2\alpha}+ \int_{\R}  e^{-\frac{|I|}{8 k_B T_i}} \, \dd I \right),
\end{multline*}
which does not depend on $J$.
\end{proof}

We are now in a position to deduce integrability properties of $\kappa_i$ in $L^2$. 
\begin{lemma} \label{lemma:kappail2loc}
The kernel $\kappa_i$ belongs to $L^2_{loc}\left(\R_+, \,  \ddd \mu(I) ; \, L^2 \left(  \R_+, \,  \ddd \mu(J) \right) \right)$.
\end{lemma}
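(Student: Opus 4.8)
The plan is to deduce this from the pointwise bounds of Lemma~\ref{lemma:boundkappai} combined with the $L^1$ estimates of Corollary~\ref{corollary:kappai}. Since every compact subset of $\R_+$ is contained in some interval $[0,R]$, it suffices to prove that
$$
\int_0^R \left( \int_{\R_+} \kappa_i(I,J)^2 \, \ddd \mu(J) \right) \ddd \mu(I) < \infty \qquad \text{for every } R>0.
$$

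To handle the inner integral, I would use the elementary observation that, by Lemma~\ref{lemma:boundkappai}, for a.e.\ $I\ge 0$ there is a bound of the form $\kappa_i(I,J)\le g(I)$ for a.e.\ $J\ge 0$, where, discarding the factor $e^{-|J-I|/(4k_BT_i)}\le 1$, one may take $g(I)=C\,(1+I)^{\frac{\gamma}{2}-\beta_2+a-1}$ when $\gamma>0$, and $g(I)=C_\alpha\,I^{-\alpha}(1+I)^{\alpha-\beta_2+a-1}$ when $\gamma=0$. Hence $\kappa_i(I,J)^2\le g(I)\,\kappa_i(I,J)$ for a.e.\ $J$, so that integrating in $J$ and invoking \eqref{eq:corollarykappaist01} (resp.\ \eqref{eq:corollarykappai01}) yields
$$
\int_{\R_+}\kappa_i(I,J)^2\,\ddd\mu(J) \le g(I)\int_{\R_+}\kappa_i(I,J)\,\ddd\mu(J) \le
\begin{cases}
C\,(1+I)^{\gamma-\beta_2+2a-2}, & \gamma>0,\\[4pt]
C_\alpha\, I^{-2\alpha}(1+I)^{2\alpha-\beta_2+2a-2}, & \gamma=0,
\end{cases}
$$
the second line being valid for any $\alpha\in\left(0,\tfrac{1-a}{2}\right)$. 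In particular $\kappa_i(I,\cdot)\in L^2(\R_+,\ddd\mu(J))$ for a.e.\ $I$.

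It then remains to integrate these bounds over $I\in[0,R]$ against $\ddd\mu$. On $[0,R]$ every factor $(1+I)^{\bullet}$ is bounded, while by \eqref{eqassumpmu1}--\eqref{eqassumpmu2} the density $\ddd\mu/\ddd I$ is dominated by $C'I^{\beta_1}$ on $[0,1]$ and by $C'I^{\beta_2}$ on $[1,R]$, both locally integrable; when $\gamma>0$ this gives finiteness at once. When $\gamma=0$, the only delicate point is the singularity $I^{-2\alpha}$ at the origin, but the constraint $\alpha<\tfrac{1-a}{2}<\tfrac12\le\tfrac{\beta_1+1}{2}$ forces $\beta_1-2\alpha>-1$, so that $I^{\beta_1-2\alpha}$ is integrable near $0$ and the integral is again finite. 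This compatibility between the admissible range of $\alpha$ and the behaviour of $\mu$ near $0$ is really the only thing that needs checking; the rest is a direct assembly of the previously established pointwise and $L^1$ estimates, so I do not expect any genuine obstacle here.
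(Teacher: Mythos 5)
Your proposal is correct, and it reaches the conclusion by a slightly different assembly of the same ingredients. The paper's proof starts again from Lemma~\ref{lemma:boundkappai} and repeats the weight-transfer step, applying Corollary~\ref{corollary:expdeltafrac} with $r=-\tfrac{\beta_2}{2}$ so as to trade the factor $(1+I)^{-\beta_2/2}$ for $(1+J)^{-\beta_2/2}$; squaring and integrating directly against $\dd\mu(J)$, the factor $(1+J)^{-\beta_2}$ compensates the growth of the density of $\mu$ and the exponential in $|J-I|$ gives a bound uniform in $I$ (resp. $C_\alpha I^{-2\alpha}$ when $\gamma=0$), after which local $\mu$-integrability in $I$ is checked exactly as you do. You instead avoid any new computation in the $J$ variable by the $L^\infty$--$L^1$ interpolation $\kappa_i(I,J)^2\le g(I)\,\kappa_i(I,J)$ with $g(I)$ the pointwise bound of Lemma~\ref{lemma:boundkappai} (exponential discarded), and then recycle the $L^1$ estimates \eqref{eq:corollarykappaist01} and \eqref{eq:corollarykappai01} of Corollary~\ref{corollary:kappai}. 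This is more economical and yields $\int_{\R_+}\kappa_i(I,J)^2\,\dd\mu(J)\le C(1+I)^{\gamma-\beta_2+2a-2}$, resp. $C_\alpha I^{-2\alpha}(1+I)^{2\alpha-\beta_2+2a-2}$, which is at least as good as the paper's bound since the exponents are nonpositive; your check at the origin in the case $\gamma=0$ (namely $\beta_1-2\alpha>-1$, which follows from $\alpha<\tfrac12$ and $\beta_1\ge0$ via \eqref{eqassumpmu1}) coincides with the paper's observation that $I\mapsto I^{-2\alpha}$ lies in $L^1_{loc}(\R_+,\dd\mu(I))$. The only point worth making explicit is the nonnegativity of $\kappa_i$, immediate from its definition \eqref{eqdef:kappai}, which is what legitimates the step $\kappa_i^2\le g(I)\,\kappa_i$.
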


\begin{proof}
The proof is similar to the one of Corollary \ref{corollary:kappai}. We distinguish the cases $\gamma > 0$ and $\gamma = 0$.

\medskip

\noindent \emph{Case $\gamma > 0$.} We combine the bound \eqref{eq:lemmaboundkappaigammast0} on $\kappa_i$  given by Lemma \ref{lemma:boundkappai}, and \eqref{eq:expdeltafrac} with $r = - \frac{\beta_2}{2}$ and $s = \frac{1}{8 k_B T_i}$, to obtain, for almost every $I$ and $J$, 
$$
\kappa_i(I,J) \leq C \, e^{-\frac{|J-I|}{8 k_B T_i}} \, (1 + I)^{\frac{\gamma}{2} - \frac{\beta_2}{2} + a - 1} \, (1 + J)^{-\frac{\beta_2}{2}} \leq C \, e^{-\frac{|J-I|}{8 k_B T_i}} \, (1 + J)^{-\frac{\beta_2}{2}},
$$
because $\frac{\gamma}{2} - \frac{\beta_2}{2} + a - 1 \leq 0$. Hence, we can write
\begin{align*}
 \int_{\R_+} \kappa_i(I,J)^2 \, \dd \mu(J) &\leq C \left(  \int_0^1 e^{-\frac{|J-I|}{4 k_B T_i}} \, (1 + J)^{-\beta_2} \, J^{\beta_1} \, \dd J + \int_{1}^{\infty} e^{-\frac{|J-I|}{4 k_B T_i}} \, (1 + J)^{-\beta_2} \, J^{\beta_2} \, \dd J\right) \\
 &\leq  C \, \int_{\R} e^{-\frac{|J|}{4 k_B T_i}} \, \dd J = C.
\end{align*}
Assumptions \eqref{eqassumpmu1}--\eqref{eqassumpmu2} on $\mu$ imply that constants lie in $L^1_{loc} \left( \R_+, \,  \dd \mu(I) \right)$, which allows to conclude in this case.

\medskip

\noindent \emph{Case $\gamma = 0$.} We combine the bound \eqref{eq:lemmaboundkappaigamma0} on $\kappa_i$  given in Lemma \ref{lemma:boundkappai}, and \eqref{eq:expdeltafrac} with $r = -\frac{\beta_2}{2}$ and $s = \frac{1}{8 k_B T_i}$, to obtain that, for any $\alpha \in \left(0,\frac{1-a}{2} \right)$,
$$
\kappa_i(I,J) \leq C_{\alpha} \, I^{-\alpha} \, e^{-\frac{|J-I|}{8 k_B T_i}} \, (1 + I)^{\alpha - \frac{\beta_2}{2} - 1 + a} \, (1 + J)^{-\frac{\beta_2}{2}} \leq C_{\alpha} \, I^{-\alpha} \, e^{-\frac{|J-I|}{8 k_B T_i}}  \, (1 + J)^{-\frac{\beta_2}{2}},
$$
because $\frac{\gamma}{2} - \frac{\beta_2}{2} + a - 1 \leq 0$. Hence, going through the same computations as in the case $\gamma > 0$, we get, for almost every $I \geq 0$,
$$
\int_{\R_+} \kappa_i(I,J)^2 \, \dd \mu (J) \leq C_{\alpha} \, I^{-2\alpha} .
$$
Since $-2\alpha > -1$, Assumptions~\eqref{eqassumpmu1}--\eqref{eqassumpmu2} on $\mu$ imply that $I \mapsto I^{-2\alpha}$ belongs to $L^1_{loc} \left( \R_+, \,  \dd \mu(I) \right)$, which ends the proof.
\end{proof}

\begin{remark}
We can in fact prove that, if $\gamma < 1$, then $\kappa_i \in L^2\left(\R_+, \,  \ddd \mu(I) ; \, L^2 \left(  \R_+, \,  \ddd \mu(J) \right) \right)$.
\end{remark}

We now combine the results on $\kappa_i$ along with the ones on $\kappa_k$, obtained from the monatomic case studied in Appendix~\ref{annex:monoatomic}.

\begin{lemma} \label{lemma2:boundkappa}
When $\gamma > 0$, there exists $C > 0$ such that
\begin{align}
 &\iint_{\R^3 \times \R_+} \kappa_2(v,I,\eta,J) \,  \ddd \eta \, \ddd \mu(J) \leq  \frac{C}{(1 + I + |v|)^{1 - \frac{\gamma}{2} - a} },   \qquad    \text{for a.e. } (v,I) \in \R^3 \times \R_+,   \label{eq:lemmakappast01} \\
 &\iint_{\R^3 \times \R_+} \kappa_2(v,I,\eta,J) \, \ddd v \, \ddd \mu(I)  \leq C,   \qquad \qquad \qquad \qquad \quad \; \; \, \text{for a.e. } (\eta,J) \in \R^3 \times \R_+.      \label{eq:lemmakappast02} 
\end{align}
When $\gamma = 0$, for any $\alpha \in \left(0,\frac{1-a}{2} \right)$, there exists $C_{\alpha}>0$ such that
\begin{align}
&\iint_{\R^3 \times \R_+} \kappa_2(v,I,\eta,J) \,  \ddd \eta \, \ddd \mu(J) \leq  \frac{C_{\alpha} \, I^{-\alpha}}{(1 + I + |v|)^{1 - \alpha - a} }, \qquad \text{for a.e. } (v,I) \in \R^3 \times \R_+,   \label{eq:lemmakappa01} \\
&\iint_{\R^3 \times \R_+} I^{-\alpha} \, \kappa_2(v,I,\eta,J) \, \ddd v \, \ddd \mu(I)  \leq C_{\alpha},   \qquad \qquad \qquad  \quad \, \text{for a.e. } (\eta,J) \in \R^3 \times \R_+. \label{eq:lemmakappa02} 
\end{align}
\end{lemma}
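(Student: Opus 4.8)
The plan is to reduce the four estimates to the one‑variable bounds already established. The pointwise inequality $\kappa_2(v,I,\eta,J)\le \kappa_k(v,\eta)\,\kappa_i(I,J)$ from Lemma~\ref{lem:kappaboundkappakkappai} factorizes the kernel into a purely velocity part and a purely internal‑energy part, so by Tonelli each of the four integrals below splits as a product of an integral of $\kappa_k$ (either in $\eta$ or in $v$) and an integral of $\kappa_i$ (either in $J$ or in $I$, possibly carrying the weight $I^{-\alpha}$). Concretely, $\iint \kappa_2\,\dd\eta\,\dd\mu(J)\le \big(\int_{\R^3}\kappa_k(v,\eta)\,\dd\eta\big)\big(\int_{\R_+}\kappa_i(I,J)\,\dd\mu(J)\big)$ and $\iint \kappa_2\,\dd v\,\dd\mu(I)\le \big(\int_{\R^3}\kappa_k(v,\eta)\,\dd v\big)\big(\int_{\R_+}\kappa_i(I,J)\,\dd\mu(I)\big)$, and similarly with the $I^{-\alpha}$ weight in the $\gamma=0$ case.

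First I would record the two estimates on $\kappa_k$ coming from the monatomic analysis carried out in Appendix~\ref{annex:monoatomic}: a uniform $L^1$ bound $\int_{\R^3}\kappa_k(v,\eta)\,\dd v\le C$ and a decay estimate $\int_{\R^3}\kappa_k(v,\eta)\,\dd\eta\le C(1+|v|)^{-1}$, which are the classical Grad‑type bounds for the monatomic kernel. For the internal‑energy part I would invoke Corollary~\ref{corollary:kappai} directly: \eqref{eq:corollarykappaist01}--\eqref{eq:corollarykappaist02} when $\gamma>0$, and \eqref{eq:corollarykappai01}--\eqref{eq:corollarykappai02} when $\gamma=0$. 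Plugging these in gives, for $\gamma>0$, $\iint\kappa_2\,\dd\eta\,\dd\mu(J)\le C(1+|v|)^{-1}(1+I)^{\frac{\gamma}{2}+a-1}$ and $\iint\kappa_2\,\dd v\,\dd\mu(I)\le C$; for $\gamma=0$, the analogous bounds with the extra $I^{-\alpha}$ factors.

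The remaining step is to convert the product of a $|v|$‑decay and an $I$‑decay into the announced joint decay in $1+I+|v|$. Set $\lambda=1-\tfrac{\gamma}{2}-a$ in the case $\gamma>0$ and $\lambda=1-\alpha-a$ in the case $\gamma=0$; in both cases $\lambda\in(0,1)$, precisely because $a$ was taken in $(0,1-\tfrac{\gamma}{2})$ and $\alpha$ in $(0,\tfrac{1-a}{2})$ (so $\alpha+a<\tfrac{1+a}{2}<1$). Then $(1+|v|)^{-1}\le(1+|v|)^{-\lambda}$ since $\lambda\le1$ and $1+|v|\ge1$, while $(1+|v|)^{\lambda}(1+I)^{\lambda}=\big((1+|v|)(1+I)\big)^{\lambda}\ge(1+I+|v|)^{\lambda}$ since $(1+|v|)(1+I)\ge1+|v|+I$ and $\lambda\ge0$. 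Combining these with $(1+I)^{\frac{\gamma}{2}+a-1}=(1+I)^{-\lambda}$ (resp. $(1+I)^{\alpha+a-1}=(1+I)^{-\lambda}$) yields \eqref{eq:lemmakappast01} and \eqref{eq:lemmakappa01}, keeping the harmless $I^{-\alpha}$ in front in the second case. The estimates \eqref{eq:lemmakappast02} and \eqref{eq:lemmakappa02} follow at once from the uniform bound $\int_{\R^3}\kappa_k(v,\eta)\,\dd v\le C$ together with \eqref{eq:corollarykappaist02}, respectively \eqref{eq:corollarykappai02}.

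All of the analytic work has already been extracted into Lemma~\ref{lemma:boundkappai}, Corollary~\ref{corollary:kappai} and Appendix~\ref{annex:monoatomic}, so no genuine obstacle remains; the only point that needs a brief verification is that the exponents $1-\tfrac{\gamma}{2}-a$ and $1-\alpha-a$ lie strictly in $(0,1)$, which is exactly what legitimizes the elementary interpolation $(1+|v|)^{-1}(1+I)^{-\lambda}\le(1+I+|v|)^{-\lambda}$ used above — and which is guaranteed by the admissible ranges of $a$ and $\alpha$.
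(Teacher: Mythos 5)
Your proposal is correct and follows essentially the same route as the paper: the tensorized bound $\kappa_2\le\kappa_k\,\kappa_i$ from Lemma~\ref{lem:kappaboundkappakkappai}, the monatomic estimates \eqref{eq:kappakbound1}--\eqref{eq:kappakbound2} for $\kappa_k$, Corollary~\ref{corollary:kappai} for $\kappa_i$, and the elementary comparison $(1+|v|)^{-1}(1+I)^{-\lambda}\le C(1+|v|+I)^{-\lambda}$ with $\lambda=1-\frac{\gamma}{2}-a$ (resp. $1-\alpha-a$) in $(0,1)$. Your explicit justification of that last step is exactly what the paper does implicitly, so there is nothing to add.
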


\begin{proof}
We distinguish the cases $\gamma > 0$ and $\gamma = 0$.

\medskip

\noindent \emph{Case $\gamma > 0$.} From the bounds  \eqref{eq:kappakbound1} and \eqref{eq:corollarykappaist01}, also recalling that $\frac{\gamma}{2} + a - 1 \in (-1,0)$, we get
\begin{align*}
\iint_{\R^3 \times \R_+} \kappa_2(v,I,\eta,J) \,  \dd \eta \, \dd \mu(J)  &\leq \left(  \int_{\R^3} \kappa_k(v,\eta) \, \dd \eta \right)  \left( \int_{\R_+} \kappa_i(I,J) \, \dd \mu(J)  \right) \\
&\leq  \frac{C}{(1 + |v|) \, (1 + I)^{1 - \frac{\gamma}{2} - a}} \leq \frac{C}{(1 + |v| + I)^{1 - \frac{\gamma}{2} - a} }.
\end{align*}
Similarly, using the bounds \eqref{eq:kappakbound2} and \eqref{eq:corollarykappaist02}, we get \eqref{eq:lemmakappast02}.

\medskip

\noindent \emph{Case $\gamma = 0$.} From   \eqref{eq:kappakbound1}  and   \eqref{eq:corollarykappai01}, for any $\alpha \in \left(0,\frac{1-a}{2} \right)$, remarking that $\alpha + a - 1 \in (-1,0)$, we have
\begin{align*}
\iint_{\R^3 \times \R_+} \kappa_2(v,I,\eta,J) \,  \dd \eta \, \dd \mu(J)  &\leq \left(  \int_{\R^3} \kappa_k(v,\eta) \, \dd \eta \right)  \left( \int_{\R_+}  \kappa_i(I,J) \, \dd \mu(J)  \right) \\
&\leq  \frac{C_{\alpha} \, I^{-\alpha}}{(1 + |v|) \, (1 + I)^{1 - \alpha - a}} \leq \frac{C_{\alpha} \, I^{-\alpha}}{(1 + |v| + I)^{1 - \alpha - a} },
\end{align*}
and again, similarly to the case $\gamma> 0$, we obtain \eqref{eq:lemmakappa02}.
\end{proof}

The following lemma is then a straightforward consequence of Lemmas~\ref{lem:kappaboundkappakkappai}, \ref{lemma:kappail2loc} and \ref{prop:kappa0L2loc}.
\begin{lemma} \label{lemma:kappaL2}
The kernel $\kappa_2$ belongs to $L^2_{loc}\left( \R^3 \times \R_+, \, \ddd v \, \ddd \mu(I) ; \, L^2 \left(\R^3 \times \R_+, \, \ddd \eta \, \ddd \mu(J) \right) \right)$.
\end{lemma}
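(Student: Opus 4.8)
The plan is to read the claim off directly from the tensorized pointwise bound \eqref{eq:kappaboundkappakkappai} of Lemma~\ref{lem:kappaboundkappakkappai}, together with the one-variable $L^2_{loc}$ statements already at hand. First I would note that $\kappa_2$ is nonnegative: in \eqref{eq:forme1kappa2} it is an integral of a product of exponentials, of the factor $|\eta - v|^{-1}$, and of $\psi$, and $\psi$ is in turn an integral of nonnegative quantities since $B_0 \geq 0$ (it is a cross section). Hence \eqref{eq:kappaboundkappakkappai} may be squared to give, for a.e.\ $(v,I,\eta,J)$,
\[
\kappa_2(v,I,\eta,J)^2 \;\leq\; \kappa_k(v,\eta)^2 \, \kappa_i(I,J)^2 .
\]

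Next, given an arbitrary compact set $\K \subset \R^3 \times \R_+$, I would enclose it in a rectangle $\K \subset A \times [0,R]$ with $A \subset \R^3$ a closed ball and $R > 0$; this is the only structural point to be made, and it is immediate. By Tonelli's theorem the $\ddd v \, \ddd \mu(I)$-integral over $A \times [0,R]$ of the squared $L^2(\ddd \eta \, \ddd \mu(J))$-norm of $\kappa_2$ then factorizes:
\[
\iint_{A \times [0,R]} \iint_{\R^3 \times \R_+} \kappa_2^2 \, \ddd \eta \, \ddd \mu(J) \, \ddd v \, \ddd \mu(I) \;\leq\; \Bigl( \int_A \int_{\R^3} \kappa_k(v,\eta)^2 \, \ddd \eta \, \ddd v \Bigr) \Bigl( \int_0^R \int_{\R_+} \kappa_i(I,J)^2 \, \ddd \mu(J) \, \ddd \mu(I) \Bigr) .
\]
The first factor is finite because $A$ is compact and, by the monatomic result~\ref{prop:kappa0L2loc} proved in Appendix~\ref{annex:monoatomic}, $\kappa_k \in L^2_{loc}\bigl(\R^3, \ddd v ; L^2(\R^3, \ddd \eta)\bigr)$; the second factor is finite because $[0,R]$ is compact in $\R_+$ and $\kappa_i \in L^2_{loc}\bigl(\R_+, \ddd \mu(I) ; L^2(\R_+, \ddd \mu(J))\bigr)$ by Lemma~\ref{lemma:kappail2loc}. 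Since $\K$ was an arbitrary compact set, this is exactly the asserted membership of $\kappa_2$ in $L^2_{loc}\bigl(\R^3 \times \R_+, \ddd v \, \ddd \mu(I) ; L^2(\R^3 \times \R_+, \ddd \eta \, \ddd \mu(J))\bigr)$.

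I do not expect any genuine obstacle: the separation of the velocity and internal-energy variables, which is the substantive content, has already been carried out in Lemma~\ref{lem:kappaboundkappakkappai}, and what remains is the nonnegativity of $\kappa_2$, the reduction of a compact set to a product of compacts, and an application of Tonelli's theorem — all routine. The mild care one should take is simply to keep the two $L^2_{loc}$ statements in their correct variables (the velocity one local in $v$ and global in $\eta$, the energy one local in $I$ and global in $J$) so that their product yields local integrability in $(v,I)$ and global square-integrability in $(\eta,J)$, as required.
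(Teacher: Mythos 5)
Your argument is correct and coincides with the paper's, which simply declares the lemma a straightforward consequence of Lemmas~\ref{lem:kappaboundkappakkappai} and \ref{lemma:kappail2loc} and Proposition~\ref{prop:kappa0L2loc}; you have merely written out the routine details (nonnegativity of $\kappa_2$ so that the tensorized bound can be squared, enclosure of a compact set in a product $A \times [0,R]$, and Tonelli) that the paper leaves implicit.
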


All norms being equivalent in finite dimension, we can choose, on the space $\R^3 \times \R$, the norm $|(v,I)|_{\R^4} = |v| + |I|$, where $|v|$ is the Euclidean norm of $v$ in $\R^3$. In the following, $\B_R$ is defined for any $R > 0$ by $\B_R = \{(v,I) \in \R^3 \times \R_+ \text{ s.t. } |v| + I < R \}$.

\begin{lemma} \label{lemma:boundK2}
When $\gamma > 0$, there exists $C > 0$ such that, for all $g \in L^2 \left(\R^3 \times \R_+, \, \ddd \eta \, \ddd \mu(J) \right)$ and any $R> 0$,
\begin{equation} \label{eq:boundK2st0}
\|K_2 g \|^2_{L^2 \left(\B_R^c, \, \ddd v \, \ddd \mu(I) \right)} \leq \frac{C}{(1 + R)^{1 - \frac{\gamma}{2} - a}} \, \| g\|^2_{L^2\left(\R^3 \times \R_+, \, \ddd \eta \, \ddd \mu(J) \right)}.
\end{equation}
When $\gamma = 0$, for any $\alpha \in \left(0,\frac{1-a}{2} \right)$, there exists $C_{\alpha}>0$ such that, for any $R> 0$, and all $g \in L^2 \left(\R^3 \times \R_+, \, \ddd \eta \, \ddd \mu(J) \right)$,
\begin{equation} \label{eq:boundK20}
\|K_2 g \|^2_{L^2 \left(\B_R^c, \, \ddd v \, \ddd \mu(I) \right)} \leq \frac{C_{\alpha}}{(1 + R)^{1 - \alpha - a}} \, \| g\|^2_{L^2\left(\R^3 \times \R_+, \, \ddd \eta \, \ddd \mu(J) \right)}.
\end{equation}
\end{lemma}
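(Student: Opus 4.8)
The plan is to run a weighted Schur-type argument on the kernel representation $K_2 g(v,I) = \iint_{\R^3\times\R_+} g(\eta,J)\,\kappa_2(v,I,\eta,J)\,\dd\eta\,\dd\mu(J)$ obtained above, feeding in the two one-sided integral bounds of Lemma~\ref{lemma2:boundkappa}. Since $\kappa_2\geq 0$, the Cauchy--Schwarz inequality gives, for almost every $(v,I)$,
$$
|K_2 g(v,I)|^2 \leq \left(\iint_{\R^3\times\R_+} \kappa_2(v,I,\eta,J)\,\dd\eta\,\dd\mu(J)\right)\left(\iint_{\R^3\times\R_+} \kappa_2(v,I,\eta,J)\,|g(\eta,J)|^2\,\dd\eta\,\dd\mu(J)\right).
$$
For the first factor I invoke \eqref{eq:lemmakappast01} when $\gamma>0$ and \eqref{eq:lemmakappa01} when $\gamma=0$. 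On $\B_R^c$ one has $|v|+I\geq R$, and since $1-\frac{\gamma}{2}-a>0$ by the choice $a\in(0,1-\frac{\gamma}{2})$, this yields the elementary monotonicity $(1+I+|v|)^{-(1-\frac{\gamma}{2}-a)}\leq (1+R)^{-(1-\frac{\gamma}{2}-a)}$, and likewise $(1+I+|v|)^{-(1-\alpha-a)}\leq (1+R)^{-(1-\alpha-a)}$ when $\gamma=0$ (here $1-\alpha-a>0$ because $\alpha<\frac{1-a}{2}<1-a$). Hence, when $\gamma>0$,
$$
|K_2 g(v,I)|^2 \leq \frac{C}{(1+R)^{1-\frac{\gamma}{2}-a}}\iint_{\R^3\times\R_+} \kappa_2(v,I,\eta,J)\,|g(\eta,J)|^2\,\dd\eta\,\dd\mu(J),
$$
while when $\gamma=0$ the singular prefactor $I^{-\alpha}$ coming from \eqref{eq:lemmakappa01} is simply kept:
$$
|K_2 g(v,I)|^2 \leq \frac{C_\alpha\, I^{-\alpha}}{(1+R)^{1-\alpha-a}}\iint_{\R^3\times\R_+} \kappa_2(v,I,\eta,J)\,|g(\eta,J)|^2\,\dd\eta\,\dd\mu(J).
$$

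Next I integrate in $(v,I)$ over $\B_R^c$ and exchange the order of integration by Tonelli's theorem, which is legitimate since all integrands are nonnegative. When $\gamma>0$ the resulting inner integral is $\iint_{\B_R^c}\kappa_2(v,I,\eta,J)\,\dd v\,\dd\mu(I)\leq \iint_{\R^3\times\R_+}\kappa_2(v,I,\eta,J)\,\dd v\,\dd\mu(I)\leq C$ by \eqref{eq:lemmakappast02}, so $\|K_2 g\|^2_{L^2(\B_R^c)}\leq \frac{C}{(1+R)^{1-\frac{\gamma}{2}-a}}\|g\|^2_{L^2}$. When $\gamma=0$ the inner integral that shows up is instead $\iint_{\B_R^c} I^{-\alpha}\,\kappa_2(v,I,\eta,J)\,\dd v\,\dd\mu(I)\leq C_\alpha$ by \eqref{eq:lemmakappa02}, which gives $\|K_2 g\|^2_{L^2(\B_R^c)}\leq \frac{C_\alpha}{(1+R)^{1-\alpha-a}}\|g\|^2_{L^2}$. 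This is precisely the claimed estimate in both regimes.

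I do not expect a genuine obstacle here: the argument is a standard Schur test, and the only point requiring care is the bookkeeping of the $I^{-\alpha}$ weight in the case $\gamma=0$. One must use the \emph{unweighted} bound \eqref{eq:lemmakappa01} for the first Cauchy--Schwarz factor and the $I^{-\alpha}$-\emph{weighted} bound \eqref{eq:lemmakappa02} for the $(v,I)$-integration of the second, so that the weight introduced by \eqref{eq:lemmakappa01} is exactly the one absorbed by \eqref{eq:lemmakappa02}; trying instead to dominate $I^{-\alpha}$ by a constant would fail near $I=0$. Everything else is Tonelli together with the inequality $(1+I+|v|)^{-s}\leq (1+R)^{-s}$ valid on $\B_R^c$ for $s>0$.
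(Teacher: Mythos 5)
Your argument is correct and is essentially identical to the paper's proof: Cauchy--Schwarz on the kernel representation, the pointwise bound \eqref{eq:lemmakappast01} (resp.\ \eqref{eq:lemmakappa01}) restricted to $\B_R^c$ to extract the factor $(1+R)^{-(1-\frac{\gamma}{2}-a)}$ (resp.\ $(1+R)^{-(1-\alpha-a)}$), then Tonelli and the integrated bound \eqref{eq:lemmakappast02} (resp.\ the $I^{-\alpha}$-weighted bound \eqref{eq:lemmakappa02}). Your remark on keeping the $I^{-\alpha}$ weight so that it is absorbed exactly by \eqref{eq:lemmakappa02} is precisely the point the paper's proof relies on in the case $\gamma=0$.
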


\begin{proof}
Let $R > 0$. We distinguish the cases $\gamma > 0$ and $\gamma = 0$.

\medskip

\noindent \emph{Case $\gamma > 0$.} Using the Cauchy-Schwarz inequality and \eqref{eq:lemmakappast01}--\eqref{eq:lemmakappast02}, we get
\begin{align*}
&\|K_2 g \|^2_{L^2 \left(\B_R^c, \, \dd v \, \dd \mu(I) \right)} = \iint_{|v| + I \geq R} \left( \iint_{\R^3 \times \R_+} g(\eta,J) \, \kappa_2(v,I,\eta,J) \, \dd \eta \,\dd \mu(J) \right)^2  \, \dd v \, \dd \mu(I)  \\
&\leq \iint_{|v| + I \geq R}  \left( \int_{\R^3 \times \R_+} g(\eta,J)^2 \, \kappa_2(v,I,\eta,J) \, \dd \eta \,\dd \mu(J) \right) \left( \iint_{\R^3 \times \R_+}  \kappa_2(v,I,\eta,J) \, \dd \eta \,\dd \mu(J) \right) \, \dd v \, \dd \mu(I)  \\
&\leq  \iint_{|v| + I \geq R} \left( \iint_{\R^3 \times \R_+}  g(\eta,J)^2 \, \kappa_2(v,I,\eta,J) \, \dd \eta \,\dd \mu(J) \right) \frac{C}{(1 + |v| + I)^{1 - \frac{\gamma}{2} - a}}  \, \dd v \, \dd \mu(I)  \\
&\leq \frac{C}{(1 + R)^{1 - \frac{\gamma}{2} - a}} \, \iint_{\R^3 \times \R_+}  g(\eta,J)^2 \, \left(  \iint_{\R^3 \times \R_+} \kappa_2(v,I,\eta,J) \, \dd v \,  \dd \mu(I)  \right)  \, \dd \eta \,\dd \mu(J) \\
&\leq \frac{C}{(1 + R)^{1 - \frac{\gamma}{2} - a}} \, \| g\|^2_{L^2\left(\R^3 \times \R_+, \, \dd \eta \, \dd \mu(J) \right)},
\end{align*}
where $C$ depends neither on $R$ nor on $g$.

\medskip

\noindent \emph{Case $\gamma = 0$.} For any $\alpha \in \left(0,\frac{1-a}{2} \right)$, we get, using the Cauchy-Schwarz inequality and \eqref{eq:lemmakappa01}--\eqref{eq:lemmakappa02},
\begin{align*}
&\|K_2 g \|^2_{L^2 \left(\B_R^c, \, \dd v \, \dd \mu(I) \right)} = \iint_{|v| + I \geq R} \left( \iint_{\R^3 \times \R_+} g(\eta,J) \, \kappa_2(v,I,\eta,J) \, \dd \eta \,\dd \mu(J) \right)^2  \, \dd v \, \dd \mu(I)  \\
&\leq \iint_{|v| + I \geq R}  \left( \iint_{\R^3 \times \R_+} g(\eta,J)^2 \, \kappa_2(v,I,\eta,J) \, \dd \eta \,\dd \mu(J) \right) \left( \iint_{\R^3 \times \R_+}  \kappa_2(v,I,\eta,J) \, \dd \eta \,\dd \mu(J) \right) \, \dd v \, \dd \mu(I)  \\
&\leq \, \iint_{|v| + I \geq R}  \left( \iint_{\R^3 \times \R_+} g(\eta,J)^2 \, \kappa_2(v,I,\eta,J) \, \dd \eta \,\dd \mu(J) \right) \frac{C_{\alpha} \, I^{-\alpha}}{(1 + |v| + I)^{1 - \alpha - a}}  \, \dd v \, \dd \mu(I)  \\
&\leq \frac{C_{\alpha}}{(1 + R)^{1 - \alpha - a}} \, \iint_{\R^3 \times \R_+} g(\eta,J)^2 \, \left(  \iint_{\R^3 \times \R_+}  I^{-\alpha}  \,  \kappa_2(v,I,\eta,J) \, \dd v \,  \dd \mu(I)  \right)  \, \dd \eta \,\dd \mu(J) \\
&\leq \frac{C_{\alpha}}{(1 + R)^{1 - \alpha - a}} \, \| g\|^2_{L^2\left(\R^3 \times \R_+, \, \dd \eta \, \dd \mu(J) \right)},
\end{align*}
which concludes the proof.
\end{proof}

In the following, we denote by $\tau_{(w,H)}$ the operator of translation by $(w,H)\in \R^3 \times \R_+$.

\begin{proposition} \label{prop:equicontinuity}
For any $\varepsilon > 0$, there exists $s > 0$ such that, for any $g \in L^2 \left(\R^3 \times \R_+, \, \ddd \eta \, \ddd \mu(J) \right)$,
$$
\left\|\Big((\tau_{(w,H)} - \emph{Id})K_2 \Big)g \right\|_{L^2 \left(\R^3 \times \R_+, \, \ddd v \, \ddd \mu(I) \right)} \leq \varepsilon \, \| g\|_{L^2\left(\R^3 \times \R_+, \, \ddd \eta \, \ddd \mu(J) \right)}, \qquad \text{for a.e. } (w,H) \in \B_s.
$$
\end{proposition}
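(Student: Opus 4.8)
The plan is to establish the equicontinuity directly. Fix $\varepsilon>0$. Since $(\tau_{(w,H)}-\mathrm{Id})K_2$ is the integral operator with kernel $(v,I,\eta,J)\mapsto\kappa_2(v+w,I+H,\eta,J)-\kappa_2(v,I,\eta,J)$, I would split the output space as $\R^3\times\R_+=\B_R\cup\B_R^c$ for a radius $R$ to be fixed below: the tail $\B_R^c$ is controlled by the decay estimate of Lemma~\ref{lemma:boundK2}, while on the bulk $\B_R$ one approximates $\kappa_2$ by a continuous, compactly supported kernel, which the local square-integrability of Lemma~\ref{lemma:kappaL2} makes possible. Write $\theta:=1-\tfrac{\gamma}{2}-a>0$ if $\gamma>0$ and $\theta:=1-\alpha-a>0$ if $\gamma=0$, and restrict throughout to translations with $|w|+H<1$.

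\emph{Tail on $\B_R^c$.} Here I would use $\|(\tau_{(w,H)}-\mathrm{Id})K_2g\|_{L^2(\B_R^c)}\le\|\tau_{(w,H)}K_2g\|_{L^2(\B_R^c)}+\|K_2g\|_{L^2(\B_R^c)}$, the second term being $\le C(1+R)^{-\theta/2}\|g\|$ by Lemma~\ref{lemma:boundK2}. For the first term, the key observation is that the bounds of Lemmas~\ref{lemma:boundkappai} and~\ref{lemma2:boundkappa} are stable, with uniform constants, under the output shift $(v,I)\mapsto(v+w,I+H)$ with $|w|+H<1$: indeed $H\ge0$ and every exponent occurring in those bounds is $\le0$, so that enlarging $I$ or $|v|$ only improves them, while Definition~\ref{def:adm} yields $\dd\mu(I-H)\le C(1+I)^{\beta_2}\,\dd I$, which is absorbed by the exponential factor $e^{-|J-I|/(4k_BT_i)}$ in $\kappa_i$; combined with the factorization $\kappa_2\le\kappa_k\kappa_i$ of Lemma~\ref{lem:kappaboundkappakkappai}, this gives $\iint\kappa_2(v+w,I+H,\eta,J)\,\dd v\,\dd\mu(I)\le C$ and $\iint\kappa_2(v+w,I+H,\eta,J)\,\dd\eta\,\dd\mu(J)\le C(1+|v|+I)^{-\theta}$ (with an additional weight $I^{-\alpha}$ when $\gamma=0$). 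Re-running the Cauchy--Schwarz (Schur) argument of the proof of Lemma~\ref{lemma:boundK2} then yields $\|\tau_{(w,H)}K_2g\|_{L^2(\B_R^c)}\le C(1+R)^{-\theta/2}\|g\|$. Choosing $R$ large, depending only on $\varepsilon$, the whole $\B_R^c$-contribution becomes $\le\tfrac{\varepsilon}{2}\|g\|$.

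\emph{Bulk on $\B_R$.} With $R$ now fixed, Lemma~\ref{lemma:kappaL2} gives $\kappa_2\in L^2\big(\B_{R+1}\times(\R^3\times\R_+),\,\dd v\,\dd\mu(I)\,\dd\eta\,\dd\mu(J)\big)$, a space in which continuous compactly supported functions are dense; I would pick such a $\tilde\kappa$ with $\|\kappa_2-\tilde\kappa\|_{L^2(\B_{R+1};L^2)}\le\delta$, of support contained in $\B_{R+1}\times\K'$ for a compact $\K'$, and let $\tilde T$ be the corresponding integral operator. A pointwise Cauchy--Schwarz bound gives $\|(K_2-\tilde T)g\|_{L^2(\B_{R+1})}\le\delta\|g\|$, and, using the comparison $\dd\mu(I-H)\le C_R\,\dd\mu(I)$ valid on the bounded set $\B_{R+1}$ (again from Definition~\ref{def:adm}), also $\|\tau_{(w,H)}(K_2-\tilde T)g\|_{L^2(\B_R)}\le C_R^{1/2}\delta\|g\|$ for $|w|+H<1$. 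For the remaining smooth piece, Cauchy--Schwarz and the uniform continuity of $\tilde\kappa$ give $|\tilde Tg(v+w,I+H)-\tilde Tg(v,I)|^2\le C(\K')\,\omega_{\tilde\kappa}(|w|+H)^2\,\|g\|^2$, with $C(\K')$ finite and $\omega_{\tilde\kappa}$ the modulus of continuity of $\tilde\kappa$; integrating over $\B_R$ yields $\|(\tau_{(w,H)}-\mathrm{Id})\tilde Tg\|_{L^2(\B_R)}\le C_R'\,\omega_{\tilde\kappa}(|w|+H)\,\|g\|$. Collecting the three estimates on $\B_R$ by the triangle inequality, then choosing $\delta$ small against $\varepsilon$ and $C_R$, and afterwards $s\in(0,1)$ small against $\delta$, $C_R'$ and $\omega_{\tilde\kappa}$, makes the $\B_R$-contribution $\le\tfrac{\varepsilon}{2}\|g\|$, which finishes the proof.

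\emph{Main obstacle.} Everything above is routine bookkeeping save for one structural point: because $\mu$ is not translation invariant, $\tau_{(w,H)}$ does not preserve the norm of $L^2(\dd v\,\dd\mu(I))$. One must therefore verify both that the far-field decay estimates survive a shift of the internal-energy output variable and that $\dd\mu(I-H)\lesssim\dd\mu(I)$ on bounded sets --- precisely what the two-sided power-law bounds on $\dd\mu/\dd I$ from Definition~\ref{def:adm}, together with the sign $H\ge0$ (which always shifts $I$ upward), provide. The case $\gamma=0$ only requires carrying the locally integrable weight $I^{-\alpha}$ through every estimate, exactly as in Lemmas~\ref{lemma2:boundkappa} and~\ref{lemma:boundK2}, and introduces nothing new.
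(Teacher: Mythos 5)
Your proposal is correct and rests on the same two pillars as the paper's proof: the uniform decay of Lemma~\ref{lemma:boundK2} to kill the far field, and the local square-integrability of $\kappa_2$ from Lemma~\ref{lemma:kappaL2} to get smallness of translated differences on a bounded set. The execution differs in two places. For the tail, the paper measures the translated difference only on $\B_{2R}$ and absorbs everything outside into $\iint_{\B_R^c}(K_2g)^2\,\dd v\,\dd\mu(I)$, so it never has to estimate $\tau_{(w,H)}K_2g$ separately; you instead re-run the Schur/Cauchy--Schwarz argument of Lemma~\ref{lemma:boundK2} for the shifted kernel $\kappa_2(v+w,I+H,\cdot,\cdot)$, using $H\ge 0$, the nonpositive exponents, and the comparison $\dd\mu(I-H)\le C(1+I)^{\beta_2}\dd I$ absorbed by the exponential in $\kappa_i$. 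For the bulk, the paper simply invokes the $L^2_{loc}$ property of $\kappa_2$ to conclude that the translated kernel difference is small for $|w|+H\le s$, whereas you spell out the underlying mechanism (approximation by a continuous compactly supported kernel, modulus of continuity, and the local comparison $\dd\mu(I-H)\le C_R\,\dd\mu(I)$ on bounded sets). Your version is therefore somewhat longer but more explicit precisely where the paper is terse, namely on the fact that $\tau_{(w,H)}$ is not an isometry of $L^2(\dd v\,\dd\mu(I))$ because $\mu$ is not translation invariant; identifying and handling that point, via the two-sided power-law bounds of Definition~\ref{def:adm} and the sign $H\ge0$, is exactly what makes the argument complete, and your treatment of the $\gamma=0$ weight $I^{-\alpha}$ is consistent with Lemmas~\ref{lemma2:boundkappa} and~\ref{lemma:boundK2}.
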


\begin{proof}
Let $R> 0$. We have, for almost every $(w,H) \in \B_R$,
\begin{equation} \label{eq:tau1}
    \begin{split}
     &\left\|\Big((\tau_{(w,H)} - \text{Id})K_2 \Big)g \right\|^2_{L^2 \left(\R^3 \times \R_+, \, \dd v \, \dd \mu(I) \right)}\\
&\leq \iint_{\B_{2 R}} \left(K_2 g(v+w,I+H) - K_2 g(v,I) \right)^2 \, \dd v \, \dd \mu(I) + \iint_{\B_R^c} K_2 g(v,I)^2 \, \dd v \, \dd \mu(I).   
    \end{split}
\end{equation}
Let us now distinguish the cases $\gamma > 0$ and $\gamma = 0$.

\medskip

\noindent \emph{Case $\gamma > 0$.} Equation \eqref{eq:boundK2st0} ensures that the second integral is bounded by 
$$
\frac{C}{(1 + R)^{1 - \frac{\gamma}{2} - a}} \, \| g\|^2_{L^2\left(\R^3 \times \R_+, \, \dd \eta \, \dd \mu(J) \right)}.
$$
We choose $R$ such that $\frac{C}{(1 + R)^{1 - \frac{\gamma}{2} - a}} \leq \frac{\varepsilon^2}{2}$. 

\medskip

\noindent \emph{Case $\gamma = 0$.} We choose $\alpha \in \left(0,\frac{1-a}{2} \right)$. Equation \eqref{eq:boundK20} ensures that the second integral is bounded by 
$$
\frac{C_{\alpha}}{(1 + R)^{1 - \alpha - a}} \, \| g\|^2_{L^2\left(\R^3 \times \R_+, \, \dd \eta \, \dd \mu(J) \right)}.
$$
We choose $R$ such that $\frac{C}{(1 + R)^{1 - \alpha - a}} \leq \frac{\varepsilon^2}{2}$. 

\bigskip

\noindent Let us conclude, now that $R$ is fixed. We notice that the first integral in \eqref{eq:tau1} can be upper-bounded by
$$
\| g\|^2_{L^2\left(\R^3 \times \R_+, \, \dd \eta \, \dd \mu(J) \right)} \, \iint_{\B_{2R}} \iint_{\R^3 \times \R_+} \left(\kappa_2(v+w,I+H,\eta,J) - \kappa_2(v,I,\eta,J) \right)^2 \, \dd \eta \, \dd \mu(J) \, \dd v \, \dd \mu(I).
$$
Thanks to Lemma \ref{lemma:kappaL2}, we know that $\kappa_2 \in L^2_{loc}\left( \R^3 \times \R_+, \, \dd v \, \dd \mu(I) ; \, L^2 \left(\R^3 \times \R_+, \, \dd \eta \, \dd \mu(J) \right) \right)$, consequently, there exists $s \in (0,R)$ such that, if $|w| + H \leq s$, then
$$
\iint_{\B_{2R}} \iint_{\R^3 \times \R_+} \left(\kappa_2(v+w,I+H,\eta,J) - \kappa_2(v,I,\eta,J) \right)^2 \, \dd \eta \, \dd \mu(J) \, \dd v \, \dd \mu(I) \leq \frac{\varepsilon^2}{2},
$$
which ends the proof.
\end{proof}

\subsection{Conclusion}

We now know that both $K_1$ and $K_2$ own the expected compactness property. So does $K_3$, as we already stated it, because $K_2$ and $K_3$ are directly related thanks to a change of variable. As a sum of three compact operators, $K$ is also compact, and that concludes the proof of Theorem~\ref{theorem}.


\appendix

\section{Grad's proof revisited} \label{annex:monoatomic}

In this appendix, we aim to present a variant proof of Grad's, involving a geometric interpretation of the change of variables to obtain the kernel form, which allows to recover the compactness property.

\smallskip

Let us briefly recall the monatomic case. In this context, no internal energies are involved, and the collision rules are given by \eqref{eq:primevelocities}. We assume that the cross section $B^m$ only depends on $|v-v_*|$ and $\left|\cos\left(\widehat{v_*-v,\sigma} \right) \right|$, in the same way as in \eqref{eq:bkmajor}, {\it i.e.} there exists $\delta_1 \in [0,1)$, $\delta_2 \in [0,\frac12)$ and a constant $C$ such that for almost every $\rho \geq 0$ and $\theta \in [0,2 \pi]$,
\begin{equation} \label{eq:bmmajor}
B^m(\rho, |\cos \theta |) \leq C \left( |\sin \theta | \left( \rho^2 + \rho^{-1} \right) + \rho + \rho^{-\delta_1} + |\sin \theta |^{-\delta_2} \right).
\end{equation}
The difficult part of the compactness proof in the monatomic case mainly lies in the study of the operator $K^m_2$ defined, for all measurable function $h$ for which it makes sense and for almost every $v \in \R^3$, by
\begin{equation} \label{eqdef:K2m}
\begin{split}
  K^m_2 h(v) = \int_{\R^3} \int_{\Sb^2} h \left(\frac{v+v_*}{2} + \frac{|v-v_*|}{2} \sigma \right) \,  M\left(\frac{v+v_*}{2} + \frac{|v-v_*|}{2} \sigma \right)^{-1/2} M(v)^{1/2} M(v_*) \\
\phantom{\int} B^m \left(|v-v_*|, \left|\cos\left(\widehat{v_*-v,\sigma} \right) \right| \right) \, \dd \sigma \, \dd v_*,
  \end{split}
\end{equation}
where we recall that $\displaystyle M(v) = c \, e^{-\frac{|v|^2}{2 k_B T_k}}$ for some constant $c > 0$.

\medskip

It is worth noting that, thanks to an interpolation argument, the cross section $|\sin \theta |^{\alpha} \, \rho^{1 + \alpha}$, $\alpha \in [0,1]$, is also covered by Assumption~\eqref{eq:bmmajor}. Indeed,
$$
|\sin \theta |^{\alpha} \, \rho^{1 + \alpha} = \left(|\sin\theta |  \rho^2 \right)^{\alpha} \, \rho^{1-\alpha} \leq \alpha \, |\sin\theta |  \rho^2 + (1 - \alpha) \, \rho \leq |\sin\theta |  \rho^2 +  \rho.
$$
Similarly, for any $\alpha \in (0,1)$, $\gamma_1 \in (-\alpha,\alpha]$ and $\gamma_2 \in [0,\frac{1-\alpha}{2})$, the cross section $|\sin \theta |^{-\gamma_2} \, \rho^{\gamma_1}$ is also covered.

\subsection{Alternative change of variables} \label{subsectionzA}

Let us first discuss the crucial change of variables which is used later on to obtain the kernel form of the operator $K^m_2$. The idea is to change a couple of angles in the two-dimensional sphere $(\Theta, \sigma)$ into a couple composed of an element of the three-dimensional open ball and an angle in the one-dimensional circle $(z,A)$. 

On the one hand, if we take $\Theta \in \Sb^2$ and $\sigma \in \Sb^2 \setminus \{\Theta,-\Theta \}$, setting $z = \frac{\Theta + \sigma}{2}$, we remark that $0 < |z| < 1$, and that $z$ is orthogonal to $\Theta-z$, see Fig.~\ref{fig:zAtransform}.

On the other hand, for any $z \in \R^3$ such that $0 < |z| < 1$, the set of all $(\Theta,\sigma) \in (\Sb^2)^2$ such that $z = \frac{\Theta + \sigma}{2}$ is a (double) circle (represented on Fig.~\ref{fig:zAtransform}) of center $z$, radius $\sqrt{1-|z|^2}$, and inside in the plane $\{z \}^{\perp}$, which can be parametrized by $A \in \Sb^1$, see Fig.~\ref{fig:zAtransformcircle}. We denote by $z^{\perp_A}$ the projection on $\{z\}^{\perp}$ of the vector of this circle corresponding to the parameter $A \in \Sb^1$. Then $(z + z^{\perp_A},z - z^{\perp_A})$ defines a unique couple $(\Theta,\sigma) \in (\Sb^2)^2 \setminus \left( \left\{(\Theta,\Theta) \; | \; \Theta \in \Sb^2 \right\} \cup \left\{(\Theta,-\Theta) \; | \; \Theta \in \Sb^2 \right\} \right)$.

\begin{figure}[!ht]
    \centering
    
\begin{subfigure}[b]{0.58\textwidth}
\centering
    \def\r{1.5} \tdplotsetmaincoords{60}{125}
\begin{tikzpicture}[tdplot_main_coords,scale=2.4]
\begin{scope}[thin,black!30]
\draw[tdplot_screen_coords] (0,0,0) circle (\r);
\tdplotCsDrawLatCircle{\r}{0}
\draw (0,0,0) node[above right]{$O$};
\draw (0,0,0) node{$\ast$};
\draw (3,3.5,4) node{$\Sb^2$};
\end{scope}
\draw (0,.2,1.79) node[above]{$\Theta \equiv z + z^{\perp_A}$};
\draw (0,.2,1.79) node{$\ast$};
\draw (1,-1,-.4) node[below left]{$\sigma$};
\draw (1,-1,-.4) node{$\ast$};

\draw (0.5,-0.4,0.7) node[above left]{$\frac{\Theta + \sigma}{2} \equiv z$};
\draw (0.5,-0.4,0.7) node{$\ast$};

\draw[blue] (0.5,-0.4,0.7) -- (0,0,0);
\draw[red] (1,-1,-.4) -- (0,0,0);
\draw[red] (0,.2,1.79) -- (0,0,0);

\draw (.45,-.34,.81) -- (.4,-.3,.74);
\draw (.45,-.36,.63) -- (.4,-.3,.74);

\draw[blue] (.25,-.2,.35) node[above right]{$|z|$};
\draw[red] (.5,-.5,-.2) node[above]{$1$};

\draw[orange] (1,-1,-.4) -- (0.5,-0.4,0.7);
\draw[orange] (.75,-.7,.15) node[above left]{$\sqrt{1-|z|^2}$};

\draw (0.5,-0.4,0.7) --  (0,.2,1.79);

\draw (-.4,-.7,.6) node[left]{$z^{\perp_A}$};



\tdplotCsDrawCircle{\r}{-40}{40}{30}
\end{tikzpicture}
\caption{Representation in the 3D ball of radius $1$.}
\label{fig:zAtransform}
\end{subfigure}
\hfill
\begin{subfigure}[b]{0.41\textwidth}
\centering
\begin{tikzpicture}[scale=2.4]
\draw (0,0,0) node[above left]{$z$};
\draw (0,0,0) node{$\ast$};
\draw[opacity=.3] (0,0) circle (1);
\draw[opacity=.3] (.4,1.1) node{$\Sb^1$};
\draw (0,0) circle (.7);
\draw (.4,.58) node{$\ast$};
\draw (.4,.58) node[right]{$\Theta$};
\draw (-.4,-.58) node{$\ast$};
\draw (-.4,-.58) node[above right]{$\sigma$};
\draw (.568,.823) node{$\ast$};
\draw (.568,.823) node[above right]{$A$};
\draw (-.568,-.823) node{$\ast$};
\draw (-.568,-.823) node[below left]{$-A$};

\draw[dashed] (-.57,-.83) -- (.57,.83);

\draw[orange] (0,0) -- (.7,0);
\draw[orange] (.35,0) node[above]{$\sqrt{1 - |z|^2}$};

\end{tikzpicture}
\caption{Projection on plane $\{ z\}^{\perp}$ providing $A$.}
\label{fig:zAtransformcircle}
\end{subfigure}
\caption{Geometric configuration between $\sigma$, $\Theta$ and $z$.}

\end{figure}
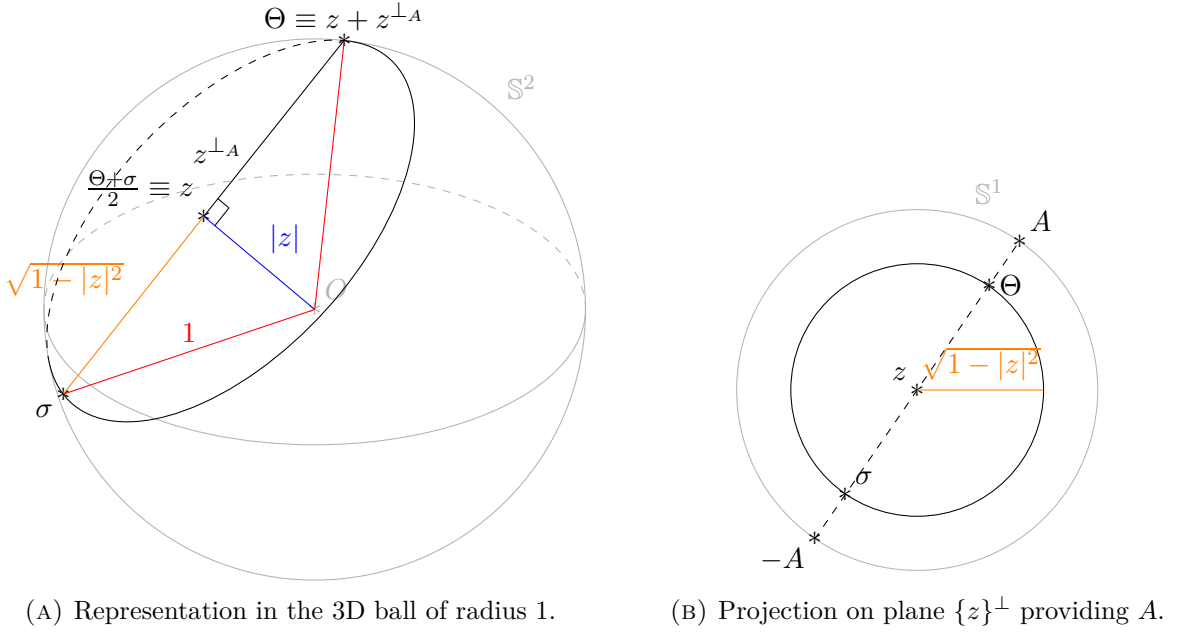

It is worth noting that, for all $z \in \R^3$ such that $0 < |z| < 1$, the parametrization of the associated circle represented on Fig.~\ref{fig:zAtransformcircle} by $A \in \Sb^1$ is defined up to the choice of an additive constant. Without being explicit about this choice, we simply state that there is a consistent choice of angle such that the transformation $(\Theta,\sigma) \mapsto (z,A)$ is a $\mathcal{C}^1$-diffeomorphism from $(\Sb^2)^2 \setminus \left( \left\{(\Theta,\Theta) \; | \; \Theta \in \Sb^2 \right\} \cup \left\{(\Theta,-\Theta) \; | \; \Theta \in \Sb^2 \right\} \right)$ to $\left\{z \in \R^3 \; | \; 0 < |z| < 1 \right\} \times \Sb^1$, that is almost everywhere from $(\Sb^2)^2$ to $\mathcal{B}(0,1) \times \Sb^1$, where $\mathcal{B}(0,1)$ stands for the open ball of $\R^3$ of center 0 and radius 1.

\begin{proposition} \label{prop:jacobian}
The Jacobian of the $(z,A)$ change of variables equals $4|z|^{-1}$.
\end{proposition}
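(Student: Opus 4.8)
The goal is to compute the Jacobian determinant of the diffeomorphism $(\Theta,\sigma) \mapsto (z,A)$ from (an open dense subset of) $(\Sb^2)^2$ to $\mathcal{B}(0,1) \times \Sb^1$, defined by $z = \tfrac{\Theta+\sigma}{2}$ and $A$ the circle-angle on $\{z\}^{\perp}$. I would not try to write the $4\times 4$ Jacobian matrix of this map directly; instead I would factor the change of variables through intermediate coordinates for which each piece is tractable. The cleanest route is to pass through the ``symmetric'' coordinates $z = \tfrac{\Theta+\sigma}{2}$ and $w = \tfrac{\Theta-\sigma}{2}$: the map $(\Theta,\sigma)\mapsto(z,w)$ is linear, so contributes a trivial constant factor, while the constraint that $\Theta,\sigma \in \Sb^2$ becomes $|z|^2 + |w|^2 = 1$ and $z \cdot w = 0$, i.e. $w$ lies on the circle of radius $\sqrt{1-|z|^2}$ in $\{z\}^\perp$. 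Thus $w = z^{\perp_A}$ and the remaining task is to understand how the surface measure on $(\Sb^2)^2$ decomposes, under this parametrization, into $\ddd z \, \ddd A$.

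\textbf{Key steps.} First I would set up coordinates on the source: parametrize a neighbourhood of a generic point of $(\Sb^2)^2$ by local charts on each copy of $\Sb^2$, so the source carries the product of the two spherical area measures. Second, use the linear change $(\Theta,\sigma) \leftrightarrow (z,w)$; since it is a fixed invertible linear map of $\R^3\times\R^3$ with determinant $\pm\tfrac{1}{4}$ (each of $z,w$ being a half-sum/half-difference, giving $(1/2)^3\cdot(1/2)^3$ from the two blocks, times a factor $8$ from the off-diagonal structure — I would pin down the exact constant by a direct $2\times2$-block computation, expecting it to contribute a clean power of $2$). Third, and this is the crux: describe the $4$-dimensional submanifold $\{|z|^2+|w|^2=1,\ z\cdot w=0\}$ of $\R^6$ and compute the ratio between the induced Riemannian volume (which corresponds, up to the linear factor, to the product spherical measure) and the coordinate measure $\ddd z \, \ddd A$, where $z$ ranges over $\mathcal{B}(0,1)$ and $A$ parametrizes the fibre circle. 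For fixed $z$, the fibre is a circle of radius $\sqrt{1-|z|^2}$, so its length element is $\sqrt{1-|z|^2}\,\ddd A$ (if $A$ is the unit-speed/arclength angle on $\Sb^1$ scaled appropriately — I would be careful here about whether $A \in \Sb^1$ means unit circle or the actual fibre). Then the coarea/Fubini-type factorization gives the density of the volume form of the constraint manifold with respect to $\ddd z$ on the base times the arclength on the fibre; computing this normal Jacobian for the two defining equations $|z|^2+|w|^2=1$ and $z\cdot w = 0$ evaluated along the fibre should produce the $|z|$-dependent factor. Finally, I would assemble the linear constant, the fibre-length factor $\sqrt{1-|z|^2}$, and the normal Jacobian of the two constraints, and check that everything collapses to $4|z|^{-1}$.

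\textbf{Main obstacle.} The delicate point is the third step: correctly relating the product of the two $2$-sphere surface measures to $\ddd z\,\ddd A$ through the constraint manifold, i.e. getting the normalization of the normal (co-area) Jacobian of the pair of constraints right. It is easy to drop or double a factor of $2$, or to misidentify the radius/speed normalization of the $A$-circle (unit $\Sb^1$ vs.\ the radius-$\sqrt{1-|z|^2}$ fibre), and these errors are exactly what determine whether the answer is $4|z|^{-1}$ or something off by a constant or a power of $\sqrt{1-|z|^2}$. A safe way to control this is to verify the total-mass consistency: integrating the claimed Jacobian, $\int_{\mathcal{B}(0,1)} 4|z|^{-1}\,\ddd z \cdot |\Sb^1| = 4\cdot 4\pi \cdot \int_0^1 r^{-1} r^2\,\ddd r \cdot 2\pi$... wait, this must reproduce $|\Sb^2|^2 = (4\pi)^2$; checking $4\pi \int_0^1 \frac{4}{r} r^2\,dr \cdot 2\pi = 4\pi\cdot 2\cdot 2\pi = 16\pi^2 = (4\pi)^2$ confirms the constant $4$, so I would use this sanity check to validate the computation at the end.
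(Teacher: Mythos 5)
Your plan stops short of the actual computation that produces the answer. Everything hinges on your third step -- relating the product of the two spherical measures to $\ddd z\,\ddd A$ through the constraint manifold $\{|z|^2+|w|^2=1,\ z\cdot w=0\}$ -- and that step is never carried out: you only assert that the fibre-length factor $\sqrt{1-|z|^2}$ and the normal Jacobian of the two constraints ``should'' collapse to $4|z|^{-1}$. The factor $|z|^{-1}$ is precisely what has to come out of that computation, so as written there is no derivation of the claimed Jacobian. The fallback you propose, the total-mass identity $\int_{\mathcal{B}(0,1)\times\Sb^1}4|z|^{-1}\,\ddd z\,\ddd A=(4\pi)^2$, cannot close this gap: it tests a single number and is satisfied by infinitely many radial profiles (for instance a constant Jacobian equal to $6$, or $6\sqrt{1-|z|^2}\,|z|^{-1}$, pass the same check), so it confirms consistency but not the pointwise value. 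A secondary slip: the linear map $(\Theta,\sigma)\mapsto\bigl(\tfrac{\Theta+\sigma}{2},\tfrac{\Theta-\sigma}{2}\bigr)$ on $\R^3\times\R^3$ has determinant $\pm\tfrac18$, not $\pm\tfrac14$; and since the source is the four-dimensional manifold $(\Sb^2)^2$, the ambient determinant is not by itself the relevant factor -- how tangent and normal directions transform is again part of the step you left open.

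The paper avoids the differential-geometric computation altogether and instead determines the image measure: by rotational invariance the Jacobian is a function $\Tilde{\mathcal{J}}(|z|)$, and one computes, for every $r\in(0,1)$, the quantity $S(r)=\iint_{(\Sb^2)^2}\mathbf{1}_{|\frac{\Theta+\sigma}{2}|\le r}\,\ddd\Theta\,\ddd\sigma$ in two ways: via the change of variables it equals $8\pi^2\int_0^r\Tilde{\mathcal{J}}(\rho)\rho^2\,\ddd\rho$, and directly (a spherical-cap area, since $|\frac{\Theta+\sigma}{2}|\le r$ is $\Theta\cdot\sigma\le 2r^2-1$) it equals $16\pi^2r^2$; differentiating in $r$ gives $\Tilde{\mathcal{J}}(r)=4/r$. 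In other words, the paper's ``sanity check'' is run over the whole family of sublevel sets $\{|z|\le r\}$, and it is this family -- not the single value $r=1$ -- that pins down the radial dependence. You could repair your argument either by actually performing the coarea/normal-Jacobian computation you outline, or, more economically, by upgrading your final check to all $r\in(0,1)$, which is essentially the paper's proof.
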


\begin{proof}
The $(z,A)$ change of variables is geometrically natural, but tricky to write explicitly. Hence, we compute its Jacobian $\mathcal{J}$ by studying the image measure associated to it and not by direct computation. Since a rotation induces no deformation, we can state that $\mathcal{J}$ depends neither on $A$ nor on $\frac{z}{|z|}$. Thus $\mathcal{J}(z,A)$ can be written as $\Tilde{\mathcal{J}}(|z|)$. 

Define, for all $r \in (0,1)$, 
$$
S(r) = \iint_{(\Sb^2)^2} \mathbf{1}_{\left| \frac{\Theta + \sigma}{2} \right| \leq r} \, \dd \Theta \, \dd \sigma.
$$
Applying the $(z,A)$ change of variables, we have, on the one hand,
\begin{align*}
 S(r) = \int_{\mathcal{B}(0,1)} \int_{\Sb^1} \mathbf{1}_{\left| z \right| \leq r} \, \mathcal{J}(z,A) \, \dd A \, \dd z = \int_{\mathcal{B}(0,1)} \int_{\Sb^1} \mathbf{1}_{\left| z \right| \leq r} \, \Tilde{\mathcal{J}}(|z|) \, \dd A \, \dd z  = 2 \pi \times 4 \pi \int_0^{r} \, \Tilde{\mathcal{J}}(\rho) \, \rho^2 \, \dd \rho,
\end{align*}
where we passed in spherical coordinates to obtain the last equality. On the other hand, we can explicitly compute $S(r)$. Indeed, we can write
\begin{align*}
S(r) &= \iint_{(\Sb^2)^2} \mathbf{1}_{\left| \frac{\Theta + \sigma}{2} \right| \leq r} \, \dd \Theta \, \dd \sigma = \left| \left\{ (\Theta,\sigma) \in (\Sb^2)^2 \text{ s.t. } \left|   \frac{\Theta + \sigma}{2} \right| \leq r \right\}  \right| \\
&= \left| \left\{ (\Theta,\sigma) \in (\Sb^2)^2 \text{ s.t. }  |\Theta|^2 + |\sigma|^2 + 2 \Theta \cdot \sigma  \leq 4 r^2 \right\}  \right| = \left| \left\{ (\Theta,\sigma) \in (\Sb^2)^2 \text{ s.t. } \Theta \cdot \sigma  \leq 2 r^2 - 1 \right\}  \right| \\
&= 4 \pi \, \left| \left\{ \sigma \in \Sb^2 \text{ s.t. } \begin{pmatrix} 0 \\ 0 \\ 1 \end{pmatrix} \cdot \sigma  \leq 2 r^2 - 1 \right\}  \right| = 4 \pi \left( 4 \pi - 4 \pi (1-r^2) \right) \\
&= 16 \pi^2 r^2,
\end{align*}
where the penultimate equality comes from the fact that the area of the spherical cap 
$$
\left\{ \sigma \in \Sb^2 \text{ s.t. } \begin{pmatrix} 0 \\ 0 \\ 1 \end{pmatrix} \cdot \sigma  > 2 r^2 - 1 \right\},
$$
corresponding to the cap above the black circle on Fig. \ref{fig:caps}, equals $2 \pi \times (2 - 2 r^2)$.
\begin{figure}[!ht]
    \centering
    \def\r{1.5} \tdplotsetmaincoords{85}{125}
\begin{tikzpicture}[tdplot_main_coords,scale=1.7]
\begin{scope}[thin,black!30]
\draw[tdplot_screen_coords] (0,0,0) circle (\r);
\tdplotCsDrawLatCircle{\r}{0}
\draw (0,0,0) node[above right]{$O$};
\draw (0,0,0) node{$\ast$};
\end{scope}
\draw (0,0,1.5) node{$\ast$};
\draw[red] (0,0,0) -- (0,0,1);
\tdplotCsDrawLatCircle{\r}{40}

\draw (0,0,1) node{$\ast$};
\draw[blue] (0,0,1) -- (0,0,1.5);
\draw[red] (0,0,.5) node[left]{$2r^2 - 1$};
\draw[blue] (0,0,1.25) node[right]{$2 - 2r^2$};

\end{tikzpicture}

\caption{Representation of the considered spherical cap.}
\label{fig:caps}
\end{figure}
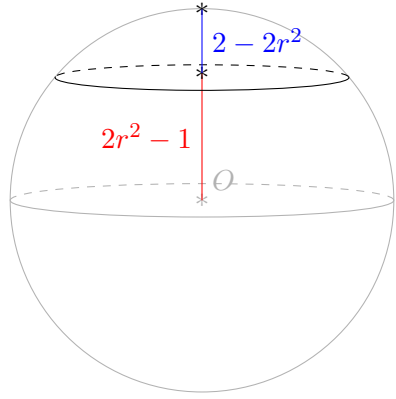

We conclude that, for all $r \in (0,1)$,
$$
\int_0^{r} \, \Tilde{\mathcal{J}}(\rho) \, \rho^2 \, \dd \rho = 2 r^2.
$$
Differentiating both sides with respect to $r$ provides the required value.
\end{proof}

\subsection{Study of the kernel form of $K^m_2$}
Recalling that, for almost every $v \in \R^3$,
\begin{multline*}
  K^m_2 h(v) = \int_{\R^3} \int_{\Sb^2} h \left(\frac{v+v_*}{2} + \frac{|v-v_*|}{2} \sigma \right) \,  M\left(\frac{v+v_*}{2} + \frac{|v-v_*|}{2} \sigma \right)^{-1/2} M(v)^{1/2} M(v_*) \\
\phantom{\int} B^m \left(|v-v_*|, |\cos(\widehat{v_*-v,\sigma})| \right) \, \dd \sigma \, \dd v_*,
\end{multline*}
we focus on the integration with respect to $v_*$, and perform the change of variable $V_* = v_* - v$ to obtain
\begin{multline*}
K^m_2 h(v) = \int_{\R^3} \int_{\Sb^2} h \left(v + \frac{V_*}{2} + \frac{|V_*|}{2} \sigma \right) \,  M\left(v + \frac{V_*}{2} + \frac{|V_*|}{2} \sigma \right)^{-1/2} M(v)^{1/2} M(V_* + v) \\
\phantom{\int} B^m(|V_*|,|\cos(\widehat{V_*,\sigma})|) \, \dd \sigma \, \dd V_*.
\end{multline*}
With the spherical coordinates, $V_* = \rho \Theta$,  $(\rho,\Theta) \in \R_+^* \times \Sb^2$, with Jacobian $\rho^2$, it becomes
\begin{multline*}
K^m_2 h(v) = \int_{\R_+^*} \iint_{(\Sb^2)^2} h \left(v + \rho \left(\frac{\Theta + \sigma}{2}\right)\right) \,  M\left(v + \rho \left(\frac{\Theta + \sigma}{2}\right)\right)^{-1/2} M(v)^{1/2} M(\rho \Theta + v) \\
\phantom{\int}  B^m(\rho,|\cos(\widehat{\Theta,\sigma})|) \, \dd \sigma \, \dd \Theta \, \rho^2 \, \dd \rho.
\end{multline*}
We now perform the $(z,A)$ change of variables detailed in Subsection \ref{subsectionzA} with Jacobian $4 |z|^{-1}$. Noticing that $z = \frac{\Theta + \sigma}{2}$, $\cos(\widehat{\Theta,\sigma}) = 2 |z|^2 - 1$ and $\Theta = z + z^{\perp_A}$, we thus obtain
\begin{multline}\label{eq:Km2afterchange}
K^m_2 h(v) = \int_{\R_+^*} \int_{\mathcal{B}(0,1)} \int_{\Sb^1} h \left(v + \rho z \right) \,  M\left(v + \rho z \right)^{-1/2} M(v)^{1/2} M(\rho \, (z + z^{\perp_A}) + v) \\
\phantom{\int} B^m(\rho, |2|z|^2 - 1|)\, \dd A \, 4|z|^{-1} \, \dd z \, \rho^2 \, \dd \rho. 
\end{multline}
Note that, in his proof \cite{grad1,grad2}, Grad wrote
$$
v' = v + p \quad  \text{ and } \quad v_* = v + p + q.
$$
By identification to our case, we have in fact
$$
p = \rho z \quad  \text{ and } \quad q  = \rho z^{\perp_A}.
$$

\begin{proposition} \label{proposition:k1kernel}
Setting, for almost every $v$, $p$, $\eta\in\R^3$, 
\begin{align}
\psi^m(v,p) &= 8 \pi \, c \int_{\R_+}  \exp\left( -\frac{r^2 + |v|^2 \, |\sin(\widehat{v,p})|^2}{2 k_B T_k} \right) \, \mathbb{I}_0 \left( \frac{  r \, |v| \, |\sin(\widehat{v,p})| }{ k_B T_k} \right) \label{eqdef:psim} \\ &\hspace{100pt}\times B^m \left( \sqrt{r^2 + |p|^2},\frac{|r^2 - |p|^2|}{r^2 + |p|^2} \right) \, \frac{r}{\sqrt{r^2 + |p|^2}}   \,  \ddd r, \nonumber \\
\kappa^m(v,\eta) &=  e^{ -\frac{ |\eta - v|^2}{8 k_B T_k}} \, e^{ -\frac{ 1}{8 k_B T_k} \, \frac{(|\eta|^2 - |v|^2)^2}{|\eta - v|^2} }  \, |\eta - v|^{-1} \, \psi^m(v,\eta - v),\phantom{\int}\label{eqdef:kappam}
\end{align}
where $\mathbb{I}_0$ stands for the modified Bessel function of the first kind of order $0$, defined in \eqref{eqdef:bessel}, then $K^m_2h$ can be written as
\begin{equation} \label{eq:kernelformofkm2}
K^m_2 h(v) = \int_{\R^3} \, h (\eta ) \, \kappa^m(v,\eta) \, \ddd \eta, \qquad \text{for a.e. } v \in \R^3.
\end{equation}
\end{proposition}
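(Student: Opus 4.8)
The plan is to obtain the kernel form of $K^m_2$ directly from \eqref{eq:Km2afterchange} by the following chain of reductions: (i) perform the integration with respect to the circle variable $A$, which produces the modified Bessel function $\mathbb{I}_0$; (ii) substitute $z \mapsto p = \rho z$ at fixed $\rho$, with Jacobian $\rho^{-3}$, so that the measure $4|z|^{-1}\rho^2\,\dd z\,\dd\rho$ becomes $4|p|^{-1}\,\dd p\,\dd\rho$ and the constraint $|z|<1$ becomes $\rho>|p|$; (iii) apply Fubini's theorem to exchange the $p$- and $\rho$-integrations; (iv) substitute $\rho \mapsto r = \sqrt{\rho^2-|p|^2}$, under which $\dd\rho = \frac{r}{\sqrt{r^2+|p|^2}}\,\dd r$, $\rho^2(1-|z|^2)=r^2$, $\rho = \sqrt{r^2+|p|^2}$ and $|2|z|^2-1| = \frac{|r^2-|p|^2|}{r^2+|p|^2}$; and (v) recombine the Gaussian prefactors and relabel $\eta = v+p$.

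For step (i), the only $A$-dependence in \eqref{eq:Km2afterchange} is in $M(\rho(z+z^{\perp_A})+v)$. Writing $w = v+\rho z$, we have $\rho(z+z^{\perp_A})+v = w + \rho z^{\perp_A}$ with $\rho z^{\perp_A}$ orthogonal to $z$ and of length $\rho\sqrt{1-|z|^2}$, so $|w+\rho z^{\perp_A}|^2 = |w|^2 + \rho^2(1-|z|^2) + 2\rho\langle w^\perp, z^{\perp_A}\rangle$, where $w^\perp$ is the component of $w$ orthogonal to $z$. Since $\rho z$ is parallel to $z$, one has $w^\perp = v^\perp$, with $|v^\perp| = |v|\,|\sin(\widehat{v,z})| = |v|\,|\sin(\widehat{v,p})|$ for $p = \rho z$. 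Taking the circle variable $A$ to be a genuine arclength parametrisation of the unit circle of $\{z\}^\perp$ (a valid choice consistent with the construction of Subsection~\ref{subsectionzA}), the inner product $\langle w^\perp, z^{\perp_A}\rangle$ equals $\sqrt{1-|z|^2}\,|v|\,|\sin(\widehat{v,p})|\cos\psi$ with $\psi$ running over $[0,2\pi]$, and \eqref{eqdef:bessel} together with the evenness of $\mathbb{I}_0$ yields
\[
\int_{\Sb^1} M\big(\rho(z+z^{\perp_A})+v\big)\,\dd A = 2\pi c\,\exp\!\left(-\frac{|v+\rho z|^2+\rho^2(1-|z|^2)}{2k_B T_k}\right)\mathbb{I}_0\!\left(\frac{\rho\sqrt{1-|z|^2}\,|v|\,|\sin(\widehat{v,p})|}{k_B T_k}\right).
\]

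Carrying out (ii)--(iv) then leaves, after noticing that the remaining $r$-dependences reproduce exactly the integrand defining $\psi^m$ in \eqref{eqdef:psim},
\[
K^m_2 h(v) = \int_{\R^3} h(v+p)\,M(v+p)^{-1/2}M(v)^{1/2}\,e^{-\frac{|v+p|^2}{2k_B T_k}}\,e^{\frac{|v|^2|\sin(\widehat{v,p})|^2}{2k_B T_k}}\,|p|^{-1}\,\psi^m(v,p)\,\dd p.
\]
Setting $\eta = v+p$ and using $M(\eta)^{-1/2}M(v)^{1/2} = e^{(|\eta|^2-|v|^2)/(4k_B T_k)}$, the product of exponential prefactors equals $\exp\!\big(-\tfrac{|\eta|^2+|v|^2}{4k_B T_k}+\tfrac{|v|^2|\sin(\widehat{v,\eta-v})|^2}{2k_B T_k}\big)$; the elementary identity
\[
\frac{|\eta|^2+|v|^2}{4} = \frac{|\eta-v|^2}{8} + \frac{(|\eta|^2-|v|^2)^2}{8\,|\eta-v|^2} + \frac{|v|^2\,|\sin(\widehat{v,\eta-v})|^2}{2},
\]
obtained by decomposing $v$ along and orthogonally to $\eta-v$, transforms this prefactor into $e^{-|\eta-v|^2/(8k_B T_k)}\,e^{-(|\eta|^2-|v|^2)^2/(8k_B T_k|\eta-v|^2)}$, so that the integrand is precisely $h(\eta)\,\kappa^m(v,\eta)$ with $\kappa^m$ as in \eqref{eqdef:kappam}, which is the claim.

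The main obstacle is step (i): one must make sure the circle variable supplied by the construction of Subsection~\ref{subsectionzA} can indeed be taken to be an honest arclength parametrisation, so that $\int_{\Sb^1}(\cdot)\,\dd A$ collapses to $\int_0^{2\pi}(\cdot)\,\dd\psi$ and the defining integral \eqref{eqdef:bessel} of $\mathbb{I}_0$ applies — this is the only place where the geometry of that subsection is used beyond the Jacobian value of Proposition~\ref{prop:jacobian}. The remaining points are routine: Fubini's theorem in step (iii) is justified by the Gaussian decay together with the polynomial bound \eqref{eq:bmmajor} on $B^m$ after replacing $h$ by $|h|$ (treating first $h\geq 0$, or $h\in L^2$, the general case following by linearity), and the concluding algebraic identity, though mildly tedious, is elementary.
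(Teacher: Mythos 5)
Your proof is correct and follows essentially the same route as the paper: the $(z,A)$ change of variables with Jacobian $4|z|^{-1}$, the circle integral producing $\mathbb{I}_0$, then $p=\rho z$, Fubini, $r=\sqrt{\rho^2-|p|^2}$ and finally $\eta=v+p$. The only (harmless) difference is bookkeeping: the paper completes the square in the Gaussian exponents before integrating in $A$, whereas you integrate $M(v_*)$ over $A$ directly and recover the prefactors of $\kappa^m$ at the very end via the identity $\frac{|\eta|^2+|v|^2}{4}=\frac{|\eta-v|^2}{8}+\frac{(|\eta|^2-|v|^2)^2}{8\,|\eta-v|^2}+\frac{|v|^2\,|\sin(\widehat{v,\eta-v})|^2}{2}$, which indeed checks out.
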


\begin{proof}
From \eqref{eq:Km2afterchange}, we immediately get
\begin{multline}\label{eq:Km2again}
K^m_2 h(v) = 4 \, c \int_{\R_+^*} \int_{\mathcal{B}(0,1)} \int_{\Sb^1} h \left(v + \rho z \right) \,  \exp\left(\frac{|v + \rho z|^2}{4 k_B T_k} - \frac{|v|^2}{4 k_B T_k} - \frac{|v + \rho z + \rho z^{\perp_A}|^2}{2 k_B T_k} \right)\\  \phantom{\int}B^m(\rho, |2|z|^2 - 1|)  \,|z|^{-1} \,  \rho^2 \, \dd A \, \dd z\, \dd \rho. 
\end{multline}
In the following, let us denote by $w$ the orthogonal projection of $v$ on $\{z\}^{\perp}$, {\it i.e.}
$$
w = v - \left(v \cdot \frac{z}{|z|} \right) \,  \frac{z}{|z|}.
$$
Then, following the idea of \cite{MR3005625}, we can successively write
\begin{align*}
    -\frac14 |v|^2 + \frac14|v+\rho z|^2  - \frac12|v+\rho z + \rho z^{\perp_A}|^2 = -\frac18|\rho z|^2 - \frac12 \left|\rho z^{\perp_A} + v + \frac12 \rho z \right|^2 \\
    = -\frac18\rho^2 |z|^2 - \frac12|\rho z^{\perp_A} + w|^2 - \frac12 \left|\left(v \cdot \frac{z}{|z|} \right) \,  \frac{z}{|z|} + \frac12 \rho z \right|^2 \\
    = -\frac18\rho^2 |z|^2 - \frac12|\rho z^{\perp_A} + w|^2 -\frac12 \left( \left(v + \frac12 \rho z \right) \cdot \frac{z}{|z|} \right)^2,
\end{align*}
since both $w$ and $z^{\perp_A}$ are orthogonal to $z$. Thus \eqref{eq:Km2again} becomes
\begin{multline}\label{eq:Km2againandagain}
K^m_2 h(v) = 4 \, c \int_{\R_+^*} \int_{\mathcal{B}(0,1)} h \left(v + \rho z \right) \,  e^{ -\frac{\rho^2 |z|^2}{8 k_B T_k}} \, e^{ -\frac{\left( \left(v + \frac12 \rho z \right) \cdot \frac{z}{|z|} \right)^2}{2 k_B T_k}}  \, |z|^{-1} \, \int_{\Sb^1} e^{ -\frac{|\rho z^{\perp_A} + w|^2}{2 k_B T_k}} \, \dd A \\ \phantom{\int} B^m(\rho, |2|z|^2 - 1|)  \, \dd z \, \rho^2 \, \dd \rho. 
\end{multline}
We now focus on the integral in the variable $A$. We have
\begin{multline*}
|\rho z^{\perp_A} + w|^2 = \rho^2 |z^{\perp_A}|^2 + |w|^2 + 2 \rho |z^{\perp_A}| \, |w| \, \cos(\widehat{z^{\perp_A},w}) \\
= \rho^2 \, (1 - |z|^2) + |w|^2 + 2 \rho \sqrt{1 - |z|^2} \, |w| \, \cos(\widehat{z^{\perp_A},w}).
\end{multline*}
Moreover, since $w$ is the projection of $v$ on the plane $\{z\}^{\perp}$, $|w| = |v| \, |\sin(\widehat{v,z})|$. Thus
$$
|\rho z^{\perp_A} + w|^2 = \rho^2 (1 -|z|^2) + |v|^2 \, \sin(\widehat{v,z})^2 + 2 \, \rho \, \sqrt{1-|z|^2 } \, |v| \, |\sin(\widehat{v,z})|  \; \cos(\widehat{z^{\perp_A},w}).
$$
For the sake of simplicity, let us momentarily denote
\begin{equation*}
X =  \frac{ \rho \, \sqrt{1-|z|^2 }  \, |v| \, |\sin(\widehat{v,z})|}{ k_B T_k},
\end{equation*}
which does not depend on $A$. Consequently, we can write
\begin{equation}
\int_{\Sb^1} \exp\left( -\frac{|\rho z^{\perp_A} + w|^2}{2 k_B T_k} \right) \, \dd A = \exp\left( -\frac{\rho^2 - |\rho z|^2 + |v|^2 \, |\sin(\widehat{v,z})|^2}{2 k_B T_k} \right) \int_{\Sb^1}  \exp \left( - X \cos(\widehat{z^{\perp_A},w}) \right) \, \dd A. \label{eq:intinA}
\end{equation}
Since the integration is performed on the whole unit circle $\Sb^1$, and because $w$ and $z^{\perp_A}$ belong to the same plane $\{ z\}^{\perp}$ (see Fig. \ref{fig:zAtransform}),
\begin{equation}
\int_{\Sb^1} \exp \left( - X \cos(\widehat{z^{\perp_A},w}) \right) \, \dd A =  \int_{0}^{2 \pi} e^{X \cos \theta } \, \dd \theta = 2 \pi \, \mathbb{I}_0(X).
\label{eq:intinAwithbessel}
\end{equation}
Consequently, thanks to \eqref{eq:intinA}--\eqref{eq:intinAwithbessel}, \eqref{eq:Km2againandagain} becomes
\begin{multline*}
K^m_2 h(v) = 8 \pi \, c \int_{\R_+^*} \int_{\mathcal{B}(0,1)} h \left(v + \rho z \right) \,  e^{ -\frac{\rho^2 |z|^2}{8 k_B T_k}} \, e^{ -\frac{\left( \left(v + \frac12 \rho z \right) \cdot \frac{z}{|z|} \right)^2}{2 k_B T_k}}  \,\exp\left( -\frac{\rho^2 - |\rho z|^2 + |v|^2 \, \sin(\widehat{v,z})^2}{2 k_B T_k}\right) \\
\mathbb{I}_0 \left( \frac{ \sqrt{\rho^2 - |\rho z|^2} \, |v| \, |\sin(\widehat{v,z})| }{ k_B T_k}  \right)  B^m(\rho, |2|z|^2 - 1|)  \, |z|^{-1} \, \rho^2 \, \dd z \,  \dd \rho.
\end{multline*}
We now make the change of variable $p = \rho z$, $\dd p = \rho^3 \, \dd z$, and use Fubini's theorem to get
\begin{multline} \label{eq:calculs}
K^m_2 h(v) 
 = 8 \pi \, c \int_{\R^3}  h \left(v + p \right) \,  e^{ -\frac{ |p|^2}{8 k_B T_k}} \, e^{ -\frac{\left( \left(v + \frac12 p \right) \cdot \frac{p}{|p|} \right)^2}{2 k_B T_k}}  \, |p|^{-1} \times \\ \left[\int_{|p|}^{\infty}  \exp\left( -\frac{\rho^2 - |p|^2 + |v|^2 \, \sin(\widehat{v,p})^2}{2 k_B T_k} \right) \mathbb{I}_0 \left( \frac{ \sqrt{\rho^2 - |p|^2} \, |v| \, |\sin(\widehat{v,p})| }{ k_B T_k}  \right)  B^m\left(\rho, \left|2\left|\frac{p}{\rho} \right|^2 - 1 \right| \right)  \, \dd \rho\right] \dd p. 
\end{multline}
When performing the change of variable $r = \sqrt{\rho^2 - |p|^2}$ of Jacobian $\frac{r}{\sqrt{r^2 + |p|^2}}$, the integral with respect to $\rho$ in \eqref{eq:calculs} becomes
\begin{align*}
\int_{\R_+}  \exp\left( -\frac{r^2 + |v|^2 \, \sin(\widehat{v,p})^2}{2 k_B T_k} \right) \, \mathbb{I}_0 \left( \frac{ r \, |v| \, |\sin(\widehat{v,p})| }{ k_B T_k}  \right)  B^m \left( \sqrt{r^2 + |p|^2},\frac{|r^2 - |p|^2|}{r^2 + |p|^2} \right) \, \frac{r}{\sqrt{r^2 + |p|^2}}  \, \dd r.
\end{align*}
Hence, using \eqref{eqdef:psim}, \eqref{eq:calculs} can be written as
$$
K^m_2 h(v) =  \int_{\R^3}  h \left(v + p \right) \,  e^{ -\frac{ |p|^2}{8 k_B T_k}} \, e^{ -\frac{\left( \left(v + \frac12 p \right) \cdot \frac{p}{|p|} \right)^2}{2 k_B T_k}}  \, |p|^{-1} \, \psi^m(v,p) \,  \dd p.
$$
Finally, noticing that, for all $v$, $\eta \in \R^3$,
$$
\left(v + \frac12 (\eta - v) \right) \cdot (\eta - v) = \frac12 (\eta + v) \cdot (\eta - v) = \frac12 \left(|\eta|^2 - |v|^2 \right),
$$
the change of variable $\eta = v + p$ allows to obtain \eqref{eq:kernelformofkm2}.
\end{proof}

\medskip

Now that we have obtained a kernel form of the operator $K^m_2$, we study its kernel $\kappa^m$, for which we intend to find upper bounds. We start by focusing on the function $\psi^m$.

\begin{lemma} \label{lemma:psiboundvarphiphi}
Let us denote, for all $\lambda \geq 0$ and $\alpha \in (- 1,1]$,
\begin{equation} \label{eqdef:varphialpha}
\varphi_{\alpha}(\lambda) = \int_{\R_+}  \exp\left( -\frac{(r-\lambda)^2}{2 k_B T_k} \right) \, \frac{r^{\alpha}}{1 + \sqrt{r \lambda}}  \, \ddd r,
\end{equation}
Then there exists $C > 0$ such that, for almost every $v$, $p \in \R^3$, we have, setting $\lambda = |v| \, |\sin(\widehat{v,p})|$,
\begin{equation} \label{eq:psimandvarphialpha}
\psi^m(v,p) \leq C \Big( (1 + |p|) \varphi_1(\lambda) + 1 + \varphi_{-\delta_1}(\lambda) + |p|^{-\delta_2} \varphi_{\delta_2}(\lambda) \Big).
\end{equation}
\end{lemma}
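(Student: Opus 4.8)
The plan is to bound the Bessel function $\mathbb{I}_0$ and then estimate the resulting integral by splitting the cross-section bound \eqref{eq:bmmajor} into its four pieces. First I would recall the elementary bound $\mathbb{I}_0(X) \leq e^{|X|}$ for all $X \in \R$, which follows immediately from $e^{X \cos\theta} \leq e^{|X|}$ in the definition \eqref{eqdef:bessel}. Applying this to the definition \eqref{eqdef:psim} of $\psi^m$ with $X = r|v||\sin(\widehat{v,p})|/(k_B T_k) = r\lambda/(k_B T_k)$, the Gaussian-times-Bessel factor becomes
$$
\exp\left( -\frac{r^2 + \lambda^2}{2 k_B T_k} \right) \, \mathbb{I}_0\!\left( \frac{r\lambda}{k_B T_k} \right) \leq \exp\left( -\frac{r^2 + \lambda^2 - 2r\lambda}{2 k_B T_k} \right) = \exp\left( -\frac{(r-\lambda)^2}{2 k_B T_k} \right),
$$
which produces exactly the Gaussian kernel appearing in \eqref{eqdef:varphialpha}.

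Next I would insert the cross-section bound \eqref{eq:bmmajor}, evaluated at $\rho = \sqrt{r^2+|p|^2}$ and $|\cos\theta| = |r^2-|p|^2|/(r^2+|p|^2)$, hence $|\sin\theta| = 2r|p|/(r^2+|p|^2)$. Each of the four terms in \eqref{eq:bmmajor} must be multiplied by the Jacobian factor $r/\sqrt{r^2+|p|^2}$ from \eqref{eqdef:psim} and then controlled by a term of the form $r^{\alpha}/(1+\sqrt{r\lambda})$ times a power of $(1+|p|)$. The key elementary inequalities I would use are $\sqrt{r^2+|p|^2} \leq r + |p| \leq (1+|p|)(1+r)$, and $\sqrt{r^2+|p|^2} \geq \max(r,|p|)$, so that for instance $|\sin\theta|\rho^2 \cdot r/\rho = 2r^2|p|/\sqrt{r^2+|p|^2} \lesssim r^2 \leq (1+|p|)\, r\,(1+r)$, which after absorbing one power of $r$ against the Gaussian decay contributes to the $(1+|p|)\varphi_1(\lambda)$ term; the term $\rho^{-1}|\sin\theta| \cdot r/\rho \lesssim r/\rho^2 \leq r/\max(r,|p|)^2$ is bounded and contributes the constant $1$; the $\rho$-term gives $r\rho/\rho = r \leq (1+|p|)(1+r)$ again in $(1+|p|)\varphi_1$; the $\rho^{-\delta_1}$ term gives $r^{1}\rho^{-\delta_1-1} \lesssim r^{-\delta_1}$ (using $\rho \geq r$), contributing $\varphi_{-\delta_1}(\lambda)$; and the $|\sin\theta|^{-\delta_2}$ term gives $r\rho^{-1}(r|p|/\rho^2)^{-\delta_2} \lesssim r^{1-\delta_2}|p|^{-\delta_2}\rho^{2\delta_2-1} \lesssim |p|^{-\delta_2} r^{\delta_2}$ (using $\rho \geq r$ and $2\delta_2-1 < 0$), contributing $|p|^{-\delta_2}\varphi_{\delta_2}(\lambda)$. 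The denominator $1+\sqrt{r\lambda}$ in $\varphi_\alpha$ is harmless: one can simply bound $r^\alpha \leq r^\alpha/(1+\sqrt{r\lambda}) \cdot (1+\sqrt{r\lambda})$ and absorb the factor $(1+\sqrt{r\lambda})$ using additional Gaussian decay, so in fact one gets the slightly stronger statement with the denominator; alternatively one notes $(1+\sqrt{r\lambda})\exp(-(r-\lambda)^2/(4k_BT_k))$ is bounded, at the cost of shrinking the Gaussian rate, which changes nothing qualitatively.

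The exponents must be checked against the admissible ranges: the terms I produced carry $r$-powers $1$, $0$, $-\delta_1$, $\delta_2$, so I need $-\delta_1 > -1$, i.e. $\delta_1 < 1$, and $\delta_2 > -1$, both guaranteed by the hypotheses $\delta_1\in[0,1)$, $\delta_2\in[0,\frac12)$; the constraint $\delta_2 < \frac12$ is precisely what is needed for $2\delta_2 - 1 < 0$ in the last estimate. Collecting the four contributions yields \eqref{eq:psimandvarphialpha} with a constant $C$ depending only on $T_k$, $\delta_1$, $\delta_2$ and the constant in \eqref{eq:bmmajor}. \textbf{The main obstacle} is purely bookkeeping: tracking the powers of $\rho = \sqrt{r^2+|p|^2}$ versus the separate powers of $r$ and $|p|$ that one wants in the final bound, and making sure every estimate uses $\rho \geq r$ or $\rho \geq |p|$ in the direction that keeps the exponent of $r$ above $-1$ so that $\varphi_\alpha(\lambda)$ is well-defined; no genuinely hard analysis is involved once the $\mathbb{I}_0(X)\leq e^{|X|}$ trick is in place.
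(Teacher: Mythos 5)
There is a genuine gap, and it sits exactly where the lemma has content: the factor $\tfrac{1}{1+\sqrt{r\lambda}}$ inside $\varphi_\alpha$. Using only $\mathbb{I}_0(X)\le e^{|X|}$ you obtain the bound on $\psi^m$ \emph{without} that denominator, which is a strictly weaker estimate, and your proposed repair rests on a false inequality: $(1+\sqrt{r\lambda})\,e^{-(r-\lambda)^2/(4 k_B T_k)}$ is \emph{not} bounded, since at $r=\lambda$ it equals $1+\lambda$. The Gaussian controls $|r-\lambda|$, not $r\lambda$, so no amount of "extra Gaussian decay" manufactures the $\tfrac{1}{1+\sqrt{r\lambda}}$ gain; also the implication goes the wrong way (the statement with the denominator is the stronger one, since $\varphi_\alpha\le\int_{\R_+}e^{-(r-\lambda)^2/(2k_BT_k)}r^\alpha\,\dd r$). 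The paper gets the denominator from the sharper Bessel estimate $\mathbb{I}_0(X)\le C\,e^{X}/(1+\sqrt{X})$, obtained by combining $\mathbb{I}_0(X)\le e^X$ with the asymptotics $\mathbb{I}_0(X)\sim e^X/\sqrt{2\pi X}$. This refinement is not cosmetic: Lemma~\ref{lemma:varphialpha} needs $\varphi_1(\lambda)\le C$ uniformly in $\lambda$, and this uses precisely the $\lambda^{-1/2}$ gain from the denominator; without it, $\int_{\R_+} r\,e^{-(r-\lambda)^2/(2k_BT_k)}\,\dd r$ grows like $\lambda=|v|\,|\sin(\widehat{v,p})|$, which would insert an extra factor of order $1+|v|$ into the bound on $\psi^m$ and destroy the decay $\int_{\R^3}\kappa^m(v,\eta)\,\dd\eta\le C/(1+|v|)$ of Proposition~\ref{prop:kappa0bound}.

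A second, smaller error concerns the term $|\sin\theta|\,\rho^{-1}$: you discard the factor $|p|$ and claim that $r/\max(r,|p|)^2$ is bounded, which is false (it equals $1/r$ when $r\ge|p|$), and even after integrating in $r$ against the Gaussian the resulting quantity diverges like $\log(1/|p|)$ as $|p|\to 0$, so it cannot yield the additive constant $1$ in \eqref{eq:psimandvarphialpha}. The paper keeps the full expression $r|p|\,(r^2+|p|^2)^{-3/2}$ and uses the exact computation $|p|\int_{\R_+} r\,(r^2+|p|^2)^{-3/2}\,\dd r=1$, which is where that constant comes from. Your treatment of the remaining pieces ($r|p|$, $r$, $r^{-\delta_1}$, $r^{\delta_2}|p|^{-\delta_2}$, using $r\le\rho$ and $2\delta_2<1$) does match the paper's proof, but the two points above must be repaired along the paper's lines for the lemma as stated to follow.
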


\medskip

\begin{proof}
Using the notation $\lambda = |v| \, |\sin(\widehat{v,p})|$, \eqref{eqdef:psim} becomes
\begin{equation} \label{eq:formofpsi0}
\psi^m(v,p) = 8 \pi \, c \int_{\R_+}  \exp\left( -\frac{r^2 + \lambda^2}{2 k_B T_k} \right) \, \mathbb{I}_0 \left( \frac{  r \, \lambda }{ k_B T_k} \right) \, B^m \left( \sqrt{r^2 + |p|^2},\frac{|r^2 - |p|^2|}{r^2 + |p|^2} \right) \, \frac{r}{\sqrt{r^2 + |p|^2}}   \,  \dd r.
\end{equation}
We know from \cite[Section 9.7.1, p. 377]{MR0167642}, that
\begin{equation} \label{eq:estimate1bessel}
\mathbb{I}_0(X) \underset{X \to + \infty}{\sim} \frac{e^{X}}{\sqrt{2 \pi X}}.
\end{equation}
It is moreover obvious that, for any $X \geq 0$,
\begin{equation} \label{eq:estimate2bessel}
\mathbb{I}_0(X) = \frac{1}{2 \pi} \int_{0}^{2 \pi} e^{ \, X \cos \theta } \, \dd \theta \leq e^X.
\end{equation}
Combining estimates \eqref{eq:estimate1bessel}--\eqref{eq:estimate2bessel}, we conclude that there exists $C > 0$ such that, for any $X \geq 0$,
\begin{equation} \label{eq:estimate3bessel}
\mathbb{I}_0(X) \leq C \, \frac{e^{X}}{1 + \sqrt{X}}.
\end{equation}
We then inject estimate \eqref{eq:estimate3bessel} into \eqref{eq:formofpsi0} to obtain
\begin{align*}
\psi^m(v,p) &\leq C \int_{\R_+} \exp \left( -\frac{r^2 + \lambda^2 - 2 r \lambda}{2 k_B T_k} \right) \frac{1}{1 + \sqrt{r \lambda}} \, B^m \left( \sqrt{r^2 + |p|^2},\frac{|r^2 - |p|^2|}{r^2 + |p|^2} \right) \, \frac{r}{\sqrt{r^2 + |p|^2}}   \,  \dd r \\
&\leq C \int_{\R_+} \exp \left( -\frac{(r-\lambda)^2}{2 k_B T_k} \right) \frac{1}{1 + \sqrt{r \lambda}} \, B^m \left( \sqrt{r^2 + |p|^2},\frac{|r^2 - |p|^2|}{r^2 + |p|^2} \right) \, \frac{r}{\sqrt{r^2 + |p|^2}}   \,  \dd r.
\end{align*}
Now, note that, if $|\cos \theta | = \frac{|r^2 - |p|^2|}{r^2 + |p|^2}$, then
$$
|\sin \theta | = \sqrt{1 -  \left(\frac{|r^2 - |p|^2|}{r^2 + |p|^2}\right)^2} = \sqrt{\frac{4 r^2 |p|^2}{\left(r^2 + |p|^2\right)^2}} = \frac{2 r |p|}{r^2 + |p|^2}.
$$
Thus it comes from the assumption \eqref{eq:bmmajor} on the cross section that
\begin{equation} \label{eq:kernelboundrp}
B^m \left( \sqrt{r^2 + |p|^2},\frac{|r^2 - |p|^2|}{r^2 + |p|^2} \right) \, \frac{r}{\sqrt{r^2 + |p|^2}} \leq C \left( r |p| + \frac{r |p|}{(r^2 + |p|)^{3/2}} + r + r^{-\delta_1} + r^{\delta_2} \ |p|^{-\delta_2} \right),
\end{equation}
because $0 \leq \delta_2 < 1$, $0 \leq \delta_2 < \frac12$, and using the fact that $r \leq \sqrt{r^2 + |p|^2}$. Injecting \eqref{eq:kernelboundrp} in \eqref{eq:formofpsi0} ensures that
\begin{multline} \label{eq:boundmultipsim}
\psi^m(v,p) \leq C(1+|p|) \int_{\R_+} \exp \left( -\frac{(r-\lambda)^2}{2 k_B T_k} \right) \frac{r}{1 + \sqrt{r \lambda}}  \,  \dd r + C \int_{\R_+} \exp \left( -\frac{(r-\lambda)^2}{2 k_B T_k} \right) \frac{r |p|}{(r^2 + |p|)^{3/2}}  \,  \dd r \\
+ C \int_{\R_+} \exp \left( -\frac{(r-\lambda)^2}{2 k_B T_k} \right) \frac{r^{-\delta_1}}{1 + \sqrt{r \lambda}}  \,  \dd r + \frac C{|p|^{\delta_2}} \int_{\R_+} \exp \left( -\frac{(r-\lambda)^2}{2 k_B T_k} \right) \frac{r^{\delta_2}}{1 + \sqrt{r \lambda}}  \,  \dd r.
\end{multline}
Finally, we notice that, for any $\lambda \geq 0$ and $p \in \R^3 \setminus \{ 0\}$,
\begin{align*}
&\int_{\R_+}  \exp\left( -\frac{(r-\lambda)^2}{2 k_B T_k} \right) \, \frac{r |p|}{(r^2 + |p|^2)^{3/2}}  \, \dd r \leq  |p| \int_{\R_+} \frac{r}{(r^2 + |p|^2)^{3/2}}  \, \dd r = |p| \left[-(r^2 + |p|^2)^{-1/2}\right]_{0}^{\infty} = 1.
\end{align*}
Plugging that previous estimate in \eqref{eq:boundmultipsim} implies \eqref{eq:psimandvarphialpha}.
\end{proof}

\smallskip

\begin{lemma} \label{lemma:varphialpha}
There exists $C>0$ such that, for almost every $v$, $p \in \R^3$
\begin{equation} \label{eq:finalboundpsim}
\psi^m(v,p) \leq C \, \left(|p| + |p|^{-\delta_2} \right).
\end{equation}
\end{lemma}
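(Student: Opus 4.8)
The plan is to bound each of the four $\varphi_\alpha$-integrals appearing in Lemma~\ref{lemma:psiboundvarphiphi} by an expression uniform in $\lambda = |v|\,|\sin(\widehat{v,p})|$, so that \eqref{eq:psimandvarphialpha} collapses to \eqref{eq:finalboundpsim}. The only nontrivial input is a uniform-in-$\lambda$ estimate on
$$
\varphi_{\alpha}(\lambda) = \int_{\R_+}  \exp\!\left( -\frac{(r-\lambda)^2}{2 k_B T_k} \right) \, \frac{r^{\alpha}}{1 + \sqrt{r \lambda}}  \, \ddd r
$$
for $\alpha \in \{1, -\delta_1, \delta_2\}$. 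First I would split the integral at $r = \lambda/2$ (say): on $r \geq \lambda/2$ one has $\sqrt{r\lambda} \geq \lambda/\sqrt{2}$, so the denominator kills a factor $(1+\lambda)^{-1/2}$, and since $r^\alpha \leq (1+r)$ is dominated by a Gaussian centered at $\lambda$ one gets, by translation $r \mapsto r - \lambda$, a bound like $C(1+\lambda)^{\alpha}/(1+\sqrt\lambda) \leq C(1+\lambda)^{\alpha - 1/2}$ for $\alpha \geq 0$, and simply $C$ for $\alpha = -\delta_1 < 0$. On the small-$r$ region $r < \lambda/2$ the Gaussian weight is $\leq \exp(-\lambda^2/(8k_BT_k))$, which decays faster than any power of $\lambda$, so that piece is bounded by $C\int_0^{\infty} e^{-\lambda^2/(8k_BT_k)} r^{\alpha}(1+\sqrt{r\lambda})^{-1}\,\ddd r$ — finite and rapidly vanishing provided $\alpha > -1$ (needed for integrability of $r^\alpha$ near $0$), which holds since $\delta_1 < 1$.

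The upshot I would record as an intermediate claim: there is $C>0$ such that $\varphi_1(\lambda) \leq C(1+\lambda)^{1/2}$, $\varphi_{\delta_2}(\lambda) \leq C(1+\lambda)^{\delta_2 - 1/2} \leq C$, and $\varphi_{-\delta_1}(\lambda) \leq C$, for a.e.\ $\lambda \geq 0$. Actually for the final statement I can afford to be cruder: I only need $\varphi_1(\lambda) \leq C(1+\lambda)$ and $\varphi_{-\delta_1}(\lambda), \varphi_{\delta_2}(\lambda) \leq C$, all uniform in $\lambda$. The cleanest route is to use $r^\alpha \leq C_\alpha(1 + r)$ for $\alpha \in (-1,1]$ on the bulk and Cauchy–Schwarz or the substitution $r = \lambda + s$ to turn the Gaussian into a fixed integrable profile; then $\int_\R e^{-s^2/(2k_BT_k)}(1+|\lambda+s|)\,\ddd s \leq C(1+\lambda)$ handles $\varphi_1$, while for $\varphi_{\delta_2}$ and $\varphi_{-\delta_1}$ the extra $(1+\sqrt{r\lambda})^{-1}$ is just bounded by $1$ and one checks $\int_{\R_+} e^{-(r-\lambda)^2/(2k_BT_k)} r^{\alpha}\,\ddd r \leq C$ uniformly in $\lambda$, splitting at $r=1$ to absorb the (integrable) singularity of $r^{-\delta_1}$ near $0$ — here one uses $e^{-(r-\lambda)^2/(2k_BT_k)} \leq 1$ on $(0,1)$ and the Gaussian tail bound on $(1,\infty)$.

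Plugging these three bounds into \eqref{eq:psimandvarphialpha} gives
$$
\psi^m(v,p) \leq C\big( (1+|p|)(1+\lambda) + 1 + 1 + |p|^{-\delta_2} \cdot 1 \big).
$$
This is not yet \eqref{eq:finalboundpsim}, which has no $\lambda$ and no $|p|\lambda$ term — so the genuinely important observation is that the factor $(1+\lambda)$ must actually be absorbed, i.e.\ the claimed $\psi^m(v,p) \leq C(|p| + |p|^{-\delta_2})$ forces $\lambda$ to drop out entirely. This means the crude $r^1 \leq 1+r$ bound is too lossy for the $\varphi_1$ term and I must instead exploit the $(1+\sqrt{r\lambda})^{-1}$ denominator: with $r$ concentrated near $\lambda$ we have $\sqrt{r\lambda} \approx \lambda$, so $r/(1+\sqrt{r\lambda}) \approx \lambda/(1+\lambda) \leq 1$, giving $\varphi_1(\lambda) \leq C$ uniformly after all (more precisely $\varphi_1(\lambda) \leq C(1+\lambda)/(1+\sqrt\lambda) \cdot$ small-$r$ correction, but for the final bound the honest statement I need is $\varphi_1(\lambda) \leq C$, which the split-at-$\lambda/2$ argument above delivers since on $r\geq\lambda/2$ the denominator contributes $(1+\sqrt\lambda)^{-1}$ and $\int e^{-s^2/(2k_BT_k)}(1+\lambda+|s|)\,\ddd s/(1+\sqrt\lambda) \leq C$). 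With $\varphi_1, \varphi_{-\delta_1}, \varphi_{\delta_2}$ all bounded by an absolute constant, \eqref{eq:psimandvarphialpha} collapses to $\psi^m(v,p) \leq C(1 + |p| + |p|^{-\delta_2})$, and since $1 \leq |p|^{-\delta_2}$ when $|p|\leq 1$ and $1\leq |p|$ when $|p|\geq 1$, we get $1 \leq |p| + |p|^{-\delta_2}$ everywhere, whence \eqref{eq:finalboundpsim}. The main obstacle is precisely the sharpness in the $\varphi_1$ estimate: one must not throw away the $(1+\sqrt{r\lambda})^{-1}$ weight, and the two-region split at $r \sim \lambda$ (Gaussian concentration region versus rapidly-decaying tail) is what makes the $\lambda$-dependence cancel.
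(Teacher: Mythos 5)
Your overall strategy is the paper's: show that $\varphi_\alpha(\lambda)$ is bounded uniformly in $\lambda$ for the three exponents $\alpha\in\{1,-\delta_1,\delta_2\}$, plug this into \eqref{eq:psimandvarphialpha}, and absorb the constant via $1\leq |p|+|p|^{-\delta_2}$. You also correctly identify the one non-trivial point, namely that for $\alpha=1$ the Bessel-induced denominator $(1+\sqrt{r\lambda})^{-1}$ must be retained so that the factor of order $\lambda$ coming from $r\approx\lambda$ cancels; the paper implements exactly this cancellation by writing $1+\sqrt{r\lambda}\geq\sqrt{r}\sqrt{\lambda}$, hence $\varphi_\alpha(\lambda)\leq \lambda^{-1/2}\int_{\R_+} r^{\alpha-1/2}e^{-(r-\lambda)^2/(2k_BT_k)}\,\dd r$, combined with the moment bound $\int_{\R_+} r^{\beta}e^{-(r-\lambda)^2/(2k_BT_k)}\,\dd r\leq C_\beta(1+\lambda^{\beta})$, and treats small and large $\lambda$ separately (it in fact proves \eqref{eq:varphialphabound} for every $\alpha\in(-1,1]$).

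However, two of the estimates you actually state in your final assembly are false as written and need repair. First, for $\varphi_1$: on $r\geq\lambda/2$ you have $\sqrt{r\lambda}\geq\lambda/\sqrt2$, so the denominator contributes a factor of order $(1+\lambda)^{-1}$, not $(1+\sqrt{\lambda})^{-1}$ as you claim twice; with your weaker factor the displayed inequality $\int_{\R}e^{-s^2/(2k_BT_k)}(1+\lambda+|s|)\,\dd s\,/(1+\sqrt{\lambda})\leq C$ is wrong (the left-hand side grows like $\sqrt{\lambda}$), whereas dividing by $1+\lambda$ does give a constant — so the fix is immediate but must be made. Second, in your ``cleanest route'' you assert that after bounding $(1+\sqrt{r\lambda})^{-1}\leq 1$ one has $\int_{\R_+}e^{-(r-\lambda)^2/(2k_BT_k)}r^{\alpha}\,\dd r\leq C$ uniformly in $\lambda$ for $\alpha=\delta_2$; this fails whenever $\delta_2>0$, since the Gaussian is centred at $\lambda$ and the integral grows like $\lambda^{\delta_2}$ (your proposed split at $r=1$ does not help because for large $\lambda$ the Gaussian peak lies in $(1,\infty)$). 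For $\varphi_{\delta_2}$ you must again keep the denominator — your own earlier split-at-$\lambda/2$ argument does this and yields $(1+\lambda)^{\delta_2-1/2}\leq C$ since $\delta_2<\tfrac12$, which is how the paper handles $\alpha\in(0,\tfrac12]$ as well. With these two corrections (both available from material already in your write-up), the proof goes through and coincides in substance with the paper's.
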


\begin{proof}
We first focus on the functions $\varphi_{\alpha}$ defined by \eqref{eqdef:varphialpha} and prove that for any $\alpha \in (-1,1]$, there exists $C_{\alpha}>0$ such that, for any $\lambda \geq 0$,
\begin{equation} \label{eq:varphialphabound}
\varphi_{\alpha}(\lambda) \leq C_{\alpha}.
\end{equation}
Let us first consider the case when $\alpha \in (-1,0]$. Then, for any $\lambda \geq 0$,
\begin{equation} \label{eq:varphialphacase-10}
    \begin{split}
        \varphi_{\alpha}(\lambda) &\leq \int_{\R_+} r^{\alpha} \,  \exp \left( -\frac{(r-\lambda)^2}{2 k_B T_k} \right)  \dd r \leq \int_{0}^1 r^{\alpha} \,  \dd r + \int_1^{+\infty} \exp \left( -\frac{(r-\lambda)^2}{2 k_B T_k} \right)  \dd r \\ &\leq \frac{1}{1+\alpha} + \int_{\R} \exp \left( -\frac{r^2}{2 k_B T_k} \right)  \dd r,
    \end{split}
\end{equation}
which does not depend on $\lambda$. We now focus on the case $0 < \alpha \leq 1$. First, note that
\begin{equation} \label{eq:varphialphacase01}
    \begin{split}
&\int_{\R_+} r^{\alpha} \,  \exp \left( -\frac{(r-\lambda)^2}{2 k_B T_k} \right)  \dd r \leq \int_{-\lambda}^{\infty} (r + \lambda)^{\alpha} \, \exp \left( -\frac{r^2}{2 k_B T_k} \right)  \dd r \\
&\leq \int_{\R} |r|^{\alpha}  \exp \left( -\frac{r^2}{2 k_B T_k} \right)  \dd r  + \, \lambda^{\alpha} \, \int_{\R}  \exp \left( -\frac{r^2}{2 k_B T_k} \right)  \dd r  \leq C_{\alpha} \, (1 + \lambda^{\alpha}).
    \end{split}
\end{equation}
Consequently, we deduce, for any $\lambda \geq 0$, that
\begin{equation} \label{eq:estimationvarphialpha}
\varphi_{\alpha}(\lambda) \leq \int_{\R_+} r^{\alpha} \,  \exp \left( -\frac{(r-\lambda)^2}{2 k_B T_k} \right)  \dd r \leq C_{\alpha} \, (1 + \lambda^{\alpha}).
\end{equation}
Besides, we have, for any $\lambda > 0$,
\begin{equation} \label{eq:estimationvarphialpha2}
\varphi_{\alpha}(\lambda) \leq \frac{1}{\sqrt{\lambda}} \int_{\R_+} r^{\alpha-\frac12} \,  \exp \left( -\frac{(r-\lambda)^2}{2 k_B T_k} \right)  \dd r.
\end{equation}
When $\alpha \in (0,\frac12]$, applying \eqref{eq:varphialphacase-10} for $\alpha - \frac12 \in (-\frac12,0]$ in \eqref{eq:estimationvarphialpha2} implies that, for any $\lambda > 0$,
\begin{equation} \label{eq:estimationvarphialpha12}
\varphi_{\alpha}(\lambda) \leq \frac{C_{\alpha}}{\sqrt{\lambda}}.
\end{equation}
Combining \eqref{eq:estimationvarphialpha} and \eqref{eq:estimationvarphialpha12} thus yields, when $0 < \alpha \leq \frac12$, that, for any $\lambda \geq 0$,
$$
\varphi_{\alpha}(\lambda) \leq C_{\alpha}.
$$
When $\frac12 < \alpha \leq 1$, applying \eqref{eq:varphialphacase01} in \eqref{eq:estimationvarphialpha2} implies that, for any $\lambda \geq 0$,
$$
\int_{\R_+} r^{\alpha-\frac12} \,  \exp \left( -\frac{(r-\lambda)^2}{2 k_B T_k} \right)  \dd r \leq C_{\alpha} \, \left(1 + \lambda^{\alpha - \frac12} \right),
$$
so that
\begin{equation} \label{eq:estimationvarphialphaautre}
\varphi_{\alpha}(\lambda) \leq C_{\alpha} \, \left( \frac{1}{\sqrt{\lambda}} + \lambda^{\alpha - 1} \right).
\end{equation}
Combining \eqref{eq:estimationvarphialpha} and \eqref{eq:estimationvarphialphaautre} allows to obtain \eqref{eq:varphialphabound} when $\frac12 < \alpha \leq 1$.

\noindent Eventually, using jointly Lemma \ref{lemma:psiboundvarphiphi}, and \eqref{eq:varphialphabound} with $\alpha \in \{1,-\delta_1,\delta_2\} \subset (-1,1]$, we obtain \eqref{eq:finalboundpsim}.
\end{proof}

\smallskip
\noindent The bound \eqref{eq:finalboundpsim} obtained on $\psi^m$ allows us to obtain both following propositions, implying the compactness of $K^m_2$, and also used in the main body of this paper for the proof of the compactness of the operator $K_2$.

\medskip

\begin{proposition} \label{prop:kappa0bound}
There exists $C>0$ such that
\begin{align}
&\int_{\R^3} \kappa^m(v,\eta) \, \ddd \eta \leq \frac{C}{1 + |v|},  \qquad \text{for a.e. } v\in \R^3,  \label{eq:kappakbound1} \\
  &\int_{\R^3} \kappa^m(v,\eta) \, \ddd v \leq C, \qquad  \qquad \;  \text{for a.e. } \eta \in \R^3. \label{eq:kappakbound2}
\end{align}
\end{proposition}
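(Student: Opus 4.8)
The natural starting point is the pointwise bound on $\psi^m$ established in Lemma~\ref{lemma:varphialpha}, namely $\psi^m(v,p)\leq C\left(|p|+|p|^{-\delta_2}\right)$, which is uniform in $v$. Injecting it into the definition~\eqref{eqdef:kappam} of $\kappa^m$ and using $|\eta-v|^{-1}\cdot|\eta-v|=1$ and $|\eta-v|^{-1}\cdot|\eta-v|^{-\delta_2}=|\eta-v|^{-1-\delta_2}$, we get, for almost every $v,\eta\in\R^3$,
\begin{equation*}
\kappa^m(v,\eta)\leq C\,e^{-\frac{|\eta-v|^2}{8 k_B T_k}}\,e^{-\frac{1}{8 k_B T_k}\frac{(|\eta|^2-|v|^2)^2}{|\eta-v|^2}}\left(1+|\eta-v|^{-1-\delta_2}\right).
\end{equation*}

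To obtain \eqref{eq:kappakbound1}, I would fix $v$, perform the change of variable $p=\eta-v$, and then pass to spherical coordinates $p=\rho\omega$ with $(\rho,\omega)\in\R_+^*\times\Sb^2$ and Jacobian $\rho^2$. Since $|\eta|^2-|v|^2=2v\cdot p+|p|^2$, the quadratic form in the second exponential becomes $\frac{(|\eta|^2-|v|^2)^2}{|\eta-v|^2}=(2v\cdot\omega+\rho)^2$, leading to
\begin{equation*}
\int_{\R^3}\kappa^m(v,\eta)\,\ddd\eta\leq C\int_0^{\infty}e^{-\frac{\rho^2}{8 k_B T_k}}\left(\rho^2+\rho^{1-\delta_2}\right)\left(\int_{\Sb^2}e^{-\frac{(2v\cdot\omega+\rho)^2}{8 k_B T_k}}\,\ddd\omega\right)\ddd\rho.
\end{equation*}
The inner spherical integral is handled by writing $v\cdot\omega=|v|\,s$ with $s=\cos(\widehat{v,\omega})\in[-1,1]$ and $\ddd\omega=2\pi\,\ddd s$; it is trivially bounded by $4\pi$, and when $|v|>0$ the substitution $u=2|v|s+\rho$ turns it into $\frac{\pi}{|v|}\int_{\rho-2|v|}^{\rho+2|v|}e^{-u^2/(8 k_B T_k)}\,\ddd u\leq\frac{C}{|v|}$. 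Combining the two estimates, the inner integral is $\leq\frac{C}{1+|v|}$, uniformly in $\rho$.

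It then remains only to note that the radial integral $\int_0^{\infty}e^{-\rho^2/(8 k_B T_k)}\left(\rho^2+\rho^{1-\delta_2}\right)\ddd\rho$ is finite: the Gaussian controls the behaviour at infinity, and near $\rho=0$ the exponent $1-\delta_2$ is positive since $\delta_2<\tfrac12$, so there is no integrability issue; this gives \eqref{eq:kappakbound1}. For \eqref{eq:kappakbound2}, one fixes $\eta$ instead and makes the change of variable $p=\eta-v$ (so $v=\eta-p$, $\ddd v=\ddd p$); since $|\eta|^2-|v|^2=2\eta\cdot p-|p|^2$ and the bound $\psi^m(v,\eta-v)\leq C\left(|p|+|p|^{-\delta_2}\right)$ does not depend on $v$, the very same computation with $\eta$ in the role of $v$ yields $\int_{\R^3}\kappa^m(v,\eta)\,\ddd v\leq\frac{C}{1+|\eta|}\leq C$. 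There is no genuine obstacle here beyond careful bookkeeping; the only point needing a little care is the reduction of the angular integral and the extraction of the decaying factor $\frac{1}{1+|v|}$, which relies on the fact that $\frac{(|\eta|^2-|v|^2)^2}{|\eta-v|^2}$ is, in spherical coordinates centred at $v$, a nondegenerate quadratic in the cosine of the polar angle — this is exactly the mechanism already used in \cite{MR3005625}.
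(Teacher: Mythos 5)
Your proof is correct and follows essentially the same route as the paper: the pointwise bound $\psi^m(v,p)\le C\left(|p|+|p|^{-\delta_2}\right)$ from Lemma~\ref{lemma:varphialpha}, the change of variable $p=\eta-v$, spherical coordinates, and the two complementary estimates of the angular integral (trivial bound and the substitution $u=2|v|s+\rho$) producing the factor $\frac{1}{1+|v|}$, with the radial integral finite since $1-\delta_2>0$. The only minor difference is in \eqref{eq:kappakbound2}: you exploit the symmetry in $(v,\eta)$ of the pointwise bound on $\kappa^m$ to get the stronger estimate $C/(1+|\eta|)$, whereas the paper simply drops the second Gaussian factor and integrates $e^{-|p|^2/(8k_BT_k)}\left(1+|p|^{-1-\delta_2}\right)$ using $-1-\delta_2>-3$; both arguments are valid.
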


\begin{proof}
By definition \eqref{eqdef:kappam} of $\kappa^m$ and from \eqref{eq:finalboundpsim}, we have
\begin{equation} \label{eq:firstboundkappam}
\kappa^m (v,\eta)   \leq C  e^{ -\frac{ |\eta-v|^2}{8 k_B T_k}} \, e^{ -\frac{ 1}{8 k_B T_k} \, \frac{(|\eta|^2 - |v|^2)^2}{|\eta - v|^2} }  \, (1 + |\eta-v|^{-1 -\delta_2}).
\end{equation}
Integrating \eqref{eq:firstboundkappam} with respect to $\eta$ and making the change of variable $\eta \mapsto \eta - v = p$ yield
$$
\int_{\R^3} \kappa^m (v,\eta)  \, \dd \eta  \leq C  \int_{\R^3} e^{ -\frac{ |p|^2}{8 k_B T_k}} \, e^{ - \frac{\left(|p| + 2 \, |v| \, \cos(\widehat{v,p})  \right)^2}{8 k_B T_k} } \, (1 + |p|^{-1 -\delta_2})   \,  \dd p.
$$
Passing to spherical coordinates, the integral in $p$ on the right-hand side writes
\begin{align*}
&\int_{\R_+} \int_{0}^{2 \pi} \int_0^{\pi} (\rho^{2} +\rho^{1-\delta_2}) \, e^{ -\frac{ \rho^{2}}{8 k_B T_k}} \, e^{ - \frac{\left(\rho + 2 \, |v| \, \cos  \theta   \right)^2}{8 k_B T_k} }  \, \sin \theta  \, \dd \theta \, \dd \psi \, \dd \rho \\
&= 2 \pi \int_{\R_+} (\rho^{2} +\rho^{1-\delta_2}) \, e^{ -\frac{ \rho^2}{8 k_B T_k}} \left( \int_0^{\pi} e^{ - \frac{\left(\rho + 2 \, |v| \, \cos \theta   \right)^2}{8 k_B T_k} }  \,  \sin \theta  \, \dd \theta  \right) \dd \rho 
\end{align*}
Performing the change of variable $s = \cos \theta $, we obtain
\begin{equation} \label{eq:estimate1proofkappaalpha}
\int_0^{\pi} e^{ - \frac{\left(\rho + 2 \, |v| \, \cos \theta   \right)^2}{8 k_B T_k} }  \,  \sin \theta  \, \dd \theta \leq \int_{-1}^{1} e^{ - \frac{\left(\rho + 2 \, |v| \, s  \right)^2}{8 k_B T_k} }  \, \dd s \leq 2.
\end{equation}
Besides, performing the change of variable $r = \rho + 2 \, |v| \, s$, we obtain
\begin{equation} \label{eq:estimate2proofkappaalpha}
\int_{-1}^{1} e^{ - \frac{\left(\rho + 2 \, |v| \, s  \right)^2}{8 k_B T_k} }  \, \dd s = \frac{1}{2 |v|} \,https://www.overleaf.com/project/6228932d38e1d843c1f92a9e \int_https://www.overleaf.com/project/6228932d38e1d843c1f92a9e{\rho - 2 |v|}^{\rho + 2 |v|} e^{ - \frac{r^2}{8 k_B T_k} }  \, \dd r \leq \frac{1}{2 |v|} \, \int_{\R} e^{ - \frac{r^2}{8 k_B T_k} }  \, \dd r = \frac{C}{|v|}.
\end{equation}
Combining estimates \eqref{eq:estimate1proofkappaalpha}--\eqref{eq:estimate2proofkappaalpha} together yields
$$
\int_0^{\pi} e^{ - \frac{\left(\rho + 2 \, |v| \, \cos \theta   \right)^2}{8 k_B T_k} }  \,  \sin \theta  \, \dd \theta  \leq \frac{C}{1 + |v|}.
$$
Hence
$$
\int_{\R^3} \kappa^m (v,\eta)  \, \dd \eta \leq \frac{C }{1 + |v|} \int_{\R_+} (\rho^{2} +\rho^{1-\delta_2}) \, e^{ -\frac{ \rho^2}{8 k_B T_k}}.
$$
Since $1 - \delta_2 \geq 0$, we have $\displaystyle \int_{\R_+} (\rho^{2} +\rho^{1-\delta_2}) \, e^{ -\frac{ \rho^2}{8 k_B T_k}}\, \dd \rho < \infty$, and we can conclude for \eqref{eq:kappakbound1}. On the other hand, integrating \eqref{eq:firstboundkappam} with respect to $v$ and making the change of variable $v \mapsto v- \eta = p$ yields
\begin{align*}
\int_{\R^3} \kappa^m (v,\eta)  \, \dd v  &\leq C  \int_{\R^3}  e^{ -\frac{ |p|^2}{8 k_B T_k}} \, e^{ - \frac{\left(|p| + 2 \, |p+\eta| \, \cos(\widehat{p+\eta,p})  \right)^2}{8 k_B T_k} } \, (1 + |p|^{-1 -\delta_2})   \,  \dd p \\
&\leq C  \int_{\R^3}  e^{ -\frac{ |p|^2}{8 k_B T_k}} \, (1 + |p|^{-1 -\delta_2})   \,  \dd p.
\end{align*}
Since $-1 - \delta_2 > - 3$, we have $\displaystyle \int_{\R^3}  e^{ -\frac{ |p|^2}{8 k_B T_k}} \, (1 + |p|^{-1 -\delta_2})   \,  \dd p < \infty$, which allows to conclude for \eqref{eq:kappakbound2}.
\end{proof}

\smallskip

\begin{proposition}  \label{prop:kappa0L2loc}
The kernel $\kappa^m$ belongs to $L^2_{loc} \left(\R^3,  \ddd v \, ; \, L^2 \left(\R^3, \, \ddd \eta \right)\right)$.
\end{proposition}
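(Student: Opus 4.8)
The plan is to read off the result from the pointwise bound on $\kappa^m$ already established during the proof of Proposition~\ref{prop:kappa0bound}. Recall that, combining Lemma~\ref{lemma:varphialpha} with the definition of $\kappa^m$, one obtains the estimate \eqref{eq:firstboundkappam}, namely
$$
\kappa^m(v,\eta) \leq C \, e^{-\frac{|\eta-v|^2}{8 k_B T_k}} \, e^{-\frac{1}{8 k_B T_k}\frac{(|\eta|^2-|v|^2)^2}{|\eta-v|^2}} \, \bigl(1+|\eta-v|^{-1-\delta_2}\bigr),
$$
so that $\kappa^m(v,\eta)^2$ is controlled by $C^2\, e^{-\frac{|\eta-v|^2}{4 k_B T_k}}\,\bigl(1+|\eta-v|^{-1-\delta_2}\bigr)^2$ after discarding the middle exponential factor, which never exceeds $1$.

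First I would fix $v \in \R^3$ and integrate this bound in $\eta$, using the translation $p = \eta - v$ of unit Jacobian, to get
$$
\int_{\R^3} \kappa^m(v,\eta)^2 \, \dd\eta \leq C \int_{\R^3} e^{-\frac{|p|^2}{4 k_B T_k}} \, \bigl(1+|p|^{-1-\delta_2}\bigr)^2 \, \dd p.
$$
Then I would note that $\bigl(1+|p|^{-1-\delta_2}\bigr)^2 \leq 2\bigl(1+|p|^{-2-2\delta_2}\bigr)$ and that, since $\delta_2 \in [0,\tfrac12)$, the exponent satisfies $2+2\delta_2 < 3$; hence $p \mapsto |p|^{-2-2\delta_2}$ is integrable on the unit ball of $\R^3$, while the Gaussian handles the behaviour at infinity. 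The right-hand side above is therefore a finite constant $C'$, which is moreover independent of $v$. Finally, for any fixed $R>0$,
$$
\int_{\B(0,R)} \int_{\R^3} \kappa^m(v,\eta)^2 \, \dd\eta \, \dd v \leq C' \, |\B(0,R)| < \infty,
$$
which is exactly the assertion that $\kappa^m \in L^2_{loc}\bigl(\R^3, \dd v ; L^2(\R^3, \dd\eta)\bigr)$.

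I do not anticipate any real difficulty here: the only delicate point is that squaring the singular factor $|\eta-v|^{-1-\delta_2}$ raises the exponent to $2+2\delta_2$, and its local integrability in dimension three is guaranteed precisely by the standing hypothesis $\delta_2<\tfrac12$ — the same hypothesis already used in Proposition~\ref{prop:kappa0bound}. I would also remark that the argument in fact yields the uniform bound $\sup_{v \in \R^3} \int_{\R^3} \kappa^m(v,\eta)^2 \, \dd\eta < \infty$; the $\eta$-Gaussian cannot be upgraded to global integrability in $v$, since the middle factor only constrains the component of $v$ along $\eta-v$, so $L^2_{loc}$ is the natural statement and is all that is needed for the application in Lemma~\ref{lemma:kappaL2}.
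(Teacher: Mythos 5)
Your proof is correct and follows the same route the paper intends: square the pointwise bound \eqref{eq:firstboundkappam}, drop the middle exponential, and observe that the squared singularity $|\eta-v|^{-2-2\delta_2}$ remains locally integrable in $\R^3$ since $2+2\delta_2<3$ (i.e. $-2-2\delta_2>-3$), giving a bound on $\int_{\R^3}\kappa^m(v,\eta)^2\,\dd\eta$ uniform in $v$ and hence the $L^2_{loc}$ statement. This is exactly the paper's argument, which it states in one line by referring to the proof of Proposition~\ref{prop:kappa0bound} together with the remark $-2-2\delta_2>-3$.
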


The proof is similar to the one of Proposition \ref{prop:kappa0bound}, also remarking that $-2-2\delta_2 > -3$.

\section{Auxiliary lemmas}

\begin{lemma} \label{lemma:expdeltafrac}
For all $s > 0$, there exists $C > 0$ such that, for all $I,J \geq 0$,
$$
e^{-s |J-I|} \, \frac{1+I}{1+J} \leq C \quad \text{and} \quad e^{-s |J-I|}  \, \frac{1+J}{1+I} \leq C.
$$
\end{lemma}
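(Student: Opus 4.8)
The plan is to reduce the whole statement to the boundedness of a single function of one real variable, since the lemma is elementary. First I would note the built-in symmetry: the substitution $(I,J)\mapsto(J,I)$ exchanges the two claimed inequalities, so it suffices to prove the first one, namely that $e^{-s|J-I|}\,\frac{1+I}{1+J}$ is bounded by a constant $C=C(s)$, uniformly in $I,J\geq 0$. (Dependence of $C$ on $s$ is permitted by the statement.)

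Next I would split according to the sign of $I-J$. If $I\leq J$ then $\frac{1+I}{1+J}\leq 1$, and since $e^{-s|J-I|}\leq 1$ the product is $\leq 1$. If instead $I>J$, I would write
$$
\frac{1+I}{1+J}=1+\frac{I-J}{1+J}\leq 1+(I-J)=1+|I-J|,
$$
so that in all cases $e^{-s|J-I|}\,\frac{1+I}{1+J}\leq e^{-s|I-J|}\bigl(1+|I-J|\bigr)$.

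It then remains only to bound the right-hand side uniformly. Writing $t=|I-J|\geq 0$, this amounts to $C_s:=\sup_{t\geq 0}e^{-st}(1+t)<\infty$, which is immediate since $t\mapsto e^{-st}(1+t)$ is continuous on $[0,\infty)$ and tends to $0$ as $t\to\infty$; one can even check $C_s=\max\{1,\,e^{s-1}/s\}$ by elementary calculus, but finiteness is all that is needed. Taking $C=C_s$ settles the first inequality, and the second follows from the same $C$ by the symmetry noted at the outset. There is essentially no obstacle here: the only point worth a word is that the uniformity over all $I,J\geq 0$ is obtained precisely by collapsing to the single variable $t=|I-J|$, after the crude bound $\frac{1+I}{1+J}\leq 1+|I-J|$.
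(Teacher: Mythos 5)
Your proof is correct, and it takes a genuinely different (and somewhat slicker) route than the paper's. You reduce everything to one variable via the crude bound $\frac{1+I}{1+J}\leq 1+(I-J)$ for $I>J$, so the whole claim collapses to the elementary fact that $\sup_{t\geq 0}e^{-st}(1+t)<\infty$, and the second inequality follows by the $(I,J)\mapsto(J,I)$ symmetry. The paper instead takes logarithms, rewrites the exponent as $\bigl(s(1+J)-\log(1+J)\bigr)-\bigl(s(1+I)-\log(1+I)\bigr)$, and exploits the monotonicity of $x\mapsto sx-\log x$ on $\left[\tfrac{1}{s},\infty\right)$, which forces a three-way case discussion depending on whether $I$ and $J$ lie above or below $\left(\tfrac{1}{s}-1\right)_+$, producing an explicit constant of the form $\max\left(1,\tfrac{1}{s}\right)e^{(1-s)_+}$. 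Your argument buys brevity and a single clean case split, at no cost: you even recover an explicit constant $\max\{1,e^{s-1}/s\}$ if desired, which is all the paper needs since only finiteness of $C$ matters for the subsequent estimates on $\kappa_i$.
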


\begin{proof}
First, we straightforwardly have, for $I \leq J$,
$$
e^{-s |J-I|} \, \frac{1+I}{1+J}  \leq 1.
$$
On the other hand, we focus on the case $I \geq J$. We can write
\begin{align*}
\text{log} \left( e^{-s |J-I|} \, \frac{1+I}{1+J} \right) &= - s (I-J) + \text{log}(1 + I) - \text{log}(1 + J) \\
&= s (1 + J) - \text{log}(1 + J) - \left(  s (1 + I) - \text{log}(1 + I) \right).
\end{align*}
Since $x \mapsto s x - \text{log}(x)$ is increasing on $\left[ \frac{1}{s}, \infty \right)$, we have, for all $I,J \geq (\frac{1}{s} - 1)_+$ such that $I \geq J$,
$$
e^{-s |J-I|} \, \frac{1+I}{1+J} \leq 1.
$$
When $(I,J) \in [0,(\frac{1}{s} - 1)_+]^2 \text{ with } I \geq J$, we have
$$
e^{-s |J-I|} \, \frac{1+I}{1+J} \leq \text{max} \left(1,\frac{1}{s} \right).
$$
When $ 0 \leq J \leq (\frac{1}{s}-1)_+ \leq I$, then
$$
e^{-s |J-I|} \, \frac{1+I}{1+J} = e^{-s I} (1 + I) \times \frac{e^{s J}}{1 + J} \leq e^{-s I}(1+I) \times e^{(1 - s)_+} \leq  \text{max} \left(1,\frac{1}{s} \right) \times e^{(1 - s)_+}.
$$
All in all, we obtain the first estimate. Simply exchanging the notation, the second one immediately comes from the first one.
\end{proof}

The following corollary is then a straightforward consequence of Lemma~\ref{lemma:expdeltafrac}.

\begin{corollary} \label{corollary:expdeltafrac}
For all $r \in \R$ and $s > 0$, there exists $C>0$ such that, for all $I,J \geq 0$,
\begin{equation} \label{eq:expdeltafrac}
e^{-s |J-I|} \, (1 + I)^r \leq C \, (1 + J)^r.
\end{equation}
\end{corollary}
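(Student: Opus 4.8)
The plan is to deduce the statement directly from Lemma~\ref{lemma:expdeltafrac} by a rescaling-and-exponentiation argument, distinguishing the sign of $r$. The case $r=0$ is immediate, so I would assume $r\neq 0$ and observe that \eqref{eq:expdeltafrac} is equivalent to $e^{-s|J-I|}\bigl(\tfrac{1+I}{1+J}\bigr)^r \le C$.

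First, suppose $r>0$. Then for all $I,J\ge 0$,
\[
e^{-s|J-I|}\,(1+I)^r = \left( e^{-\frac{s}{r}|J-I|}\,\frac{1+I}{1+J}\right)^{\!r}\,(1+J)^r ,
\]
using that $\bigl(e^{-\frac{s}{r}|J-I|}\bigr)^r = e^{-s|J-I|}$. Applying the first inequality of Lemma~\ref{lemma:expdeltafrac} with $s/r$ in place of $s$ produces a constant $C_0>0$ bounding the bracket, hence $e^{-s|J-I|}(1+I)^r \le C_0^{\,r}\,(1+J)^r$, which is \eqref{eq:expdeltafrac} with $C = C_0^{\,r}$.

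Next, suppose $r<0$ and set $\rho = -r>0$. Then
\[
e^{-s|J-I|}\,(1+I)^r = \left( e^{-\frac{s}{\rho}|J-I|}\,\frac{1+J}{1+I}\right)^{\!\rho}\,(1+J)^r ,
\]
and the second inequality of Lemma~\ref{lemma:expdeltafrac}, used with $s/\rho$ in place of $s$, bounds the bracket by some $C_1>0$; therefore $e^{-s|J-I|}(1+I)^r \le C_1^{\,\rho}\,(1+J)^r$, which is again the claim.

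I do not expect any real obstacle here: the only points that need attention are the bookkeeping with the sign of $r$ — which dictates whether one invokes the first or the second estimate of Lemma~\ref{lemma:expdeltafrac} — and the fact that the exponential rate $s$ must be divided by $|r|$ before applying the lemma, so that raising the resulting bound to the power $|r|$ reproduces $e^{-s|J-I|}$. As an alternative, one could bypass the reduction entirely and mimic the monotonicity argument of Lemma~\ref{lemma:expdeltafrac} directly, examining the sign of $x\mapsto sx - r\log x$ on $[1,\infty)$, but the short reduction above is the cleaner route.
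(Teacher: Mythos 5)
Your proof is correct and is essentially the paper's argument: the paper states the corollary as a straightforward consequence of Lemma~\ref{lemma:expdeltafrac}, and the intended deduction is exactly your rescaling of $s$ by $|r|$ followed by raising the lemma's bound to the power $|r|$, with the sign of $r$ selecting which of the two estimates to use. The bookkeeping in both cases (including the trivial case $r=0$) is handled correctly.
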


\bibliographystyle{abbrv}
\bibliography{biblio}

\begin{thebibliography}{10}

\bibitem{MR0167642}
M.~Abramowitz and I.~A. Stegun.
\newblock {\em Handbook of mathematical functions with formulas, graphs, and
  mathematical tables}.
\newblock National Bureau of Standards Applied Mathematics Series, No. 55. U.
  S. Government Printing Office, Washington, D.C., 1964.
\newblock For sale by the Superintendent of Documents.

\bibitem{https://doi.org/10.48550/arxiv.2201.01365}
N.~Bernhoff.
\newblock Linearized {B}oltzmann collision operator: I. {P}olyatomic molecules
  modeled by a discrete internal energy variable and multicomponent mixtures.
\newblock {\em arXiv preprint arXiv:2201.01365}, 2022.

\bibitem{https://doi.org/10.48550/arxiv.2201.01377}
N.~Bernhoff.
\newblock Linearized {B}oltzmann collision operator: {II}. {P}olyatomic
  molecules modeled by a continuous internal energy variable.
\newblock {\em arXiv preprint arXiv:2201.01377}, 2022.

\bibitem{bol}
L.~Boltzmann.
\newblock Weitere {S}tudien {\"u}ber das {W}{\"a}rmegleichgewicht unter
  {G}asmolek{\"u}len.
\newblock {\em Sitz. Ber. Akad. Wiss.}, 66:275--370, 1872.

\bibitem{bor-lar-75}
C.~Borgnakke and P.~S. Larsen.
\newblock Statistical collision model for {M}onte {C}arlo simulation of
  polyatomic gas mixture.
\newblock {\em J. Comput. Phys.}, 18(4):405--420, 1975.

\bibitem{borsoni2022general}
T.~Borsoni, M.~Bisi, and M.~Groppi.
\newblock A general framework for the kinetic modelling of polyatomic gases.
\newblock {\em Comm. Math. Phys.}, pages 1--52, 2022.

\bibitem{MR3005625}
L.~Boudin, B.~Grec, M.~Pavi\'{c}, and F.~Salvarani.
\newblock Diffusion asymptotics of a kinetic model for gaseous mixtures.
\newblock {\em Kinet. Relat. Models}, 6(1):137--157, 2013.

\bibitem{boudin:hal-03629556}
L.~Boudin, A.~Rossi, and F.~Salvarani.
\newblock {A kinetic model of polyatomic gas with resonant collisions}.
\newblock {\em HAL preprint hal-03629556}, 2022.

\bibitem{MR3557717}
L.~Boudin and F.~Salvarani.
\newblock Compactness of linearized kinetic operators.
\newblock In {\em From particle systems to partial differential equations.
  {III}}, volume 162 of {\em Springer Proc. Math. Stat.}, pages 73--97.
  Springer, 2016.

\bibitem{MR1277241}
J.-F. Bourgat, L.~Desvillettes, P.~Le~Tallec, and B.~Perthame.
\newblock Microreversible collisions for polyatomic gases and {B}oltzmann's
  theorem.
\newblock {\em European J. Mech. B Fluids}, 13(2):237--254, 1994.

\bibitem{BrullShahineThieullen}
S.~Brull, M.~Shahine, and P.~Thieullen.
\newblock Personal communication, 2022.

\bibitem{MR0098477}
T.~Carleman.
\newblock {\em Probl\`emes math\'{e}matiques dans la th\'{e}orie cin\'{e}tique
  des gaz}, volume~2 of {\em Publ. Sci. Inst. Mittag-Leffler}.
\newblock Almqvist \& Wiksells Boktryckeri AB, Uppsala, 1957.

\bibitem{MR0255203}
C.~Cercignani.
\newblock Boundary value problems in linearized kinetic theory.
\newblock In {\em Transport {T}heory ({P}roc. {S}ympos. {A}ppl. {M}ath., {N}ew
  {Y}ork, 1967), {SIAM}-{AMS} {P}roc., {V}ol. {I}}, pages 249--268. Amer. Math.
  Soc., Providence, R.I., 1969.

\bibitem{MR1618164}
C.~Cercignani, M.~Lampis, and J.~Struckmeier.
\newblock New models for the differential cross section of a polyatomic gas in
  the frame of the scattering kernel theory.
\newblock {\em Mech. Res. Comm.}, 25(3):231--236, 1998.

\bibitem{cercignani2000applications}
C.~Cercignani, M.~Lampis, and J.~Struckmeier.
\newblock Applications of a new model for the differential cross-section of a
  classical polyatomic gas.
\newblock {\em Transp. Theory Stat. Phys.}, 29(3-5):355--373, 2000.

\bibitem{MR1611828}
L.~Desvillettes.
\newblock Sur un mod\`ele de type {B}orgnakke-{L}arsen conduisant \`a des lois
  d'\'{e}nergie non lin\'{e}aires en temp\'{e}rature pour les gaz parfaits
  polyatomiques.
\newblock {\em Ann. Fac. Sci. Toulouse Math. (6)}, 6(2):257--262, 1997.

\bibitem{MR2118066}
L.~Desvillettes, R.~Monaco, and F.~Salvarani.
\newblock A kinetic model allowing to obtain the energy law of polytropic gases
  in the presence of chemical reactions.
\newblock {\em Eur. J. Mech. B Fluids}, 24(2):219--236, 2005.

\bibitem{gamba2020cauchy}
I.~M. Gamba and M.~Pavi{\'c}-{\v{C}}oli{\'c}.
\newblock On the {C}auchy problem for {B}oltzmann equation modelling a
  polyatomic gas.
\newblock {\em arXiv preprint arXiv:2005.01017}, 2020.

\bibitem{grad1}
H.~Grad.
\newblock Asymptotic theory of the {B}oltzmann equation.
\newblock {\em Phys. Fluids}, 6:147--181, 1963.

\bibitem{grad2}
H.~Grad.
\newblock Asymptotic theory of the {B}oltzmann equation. {II}.
\newblock In {\em Rarefied {G}as {D}ynamics ({P}roc. 3rd {I}nternat. {S}ympos.,
  {P}alais de l'{UNESCO}, {P}aris, 1962), {V}ol. {I}}, pages 26--59. Academic
  Press, New York, 1963.

\bibitem{MR1544518}
E.~Hecke.
\newblock \"{U}ber die {I}ntegralgleichung der kinetischen {G}astheorie.
\newblock {\em Math. Z.}, 12(1):274--286, 1922.

\bibitem{MR1511713}
D.~Hilbert.
\newblock Begr\"{u}ndung der kinetischen {G}astheorie.
\newblock {\em Math. Ann.}, 72(4):562--577, 1912.

\bibitem{lom-fag-pac-gro-15}
A.~Lombardi, N.~Faginas-Lago, L.~Pacifici, and G.~Grossi.
\newblock Energy transfer upon collision of selectively excited $\mbox{CO}_2$
  molecules: {S}tate-to-state cross sections and probabilities for modeling of
  atmospheres and gaseous flows.
\newblock {\em J. Chem. Phys.}, 143(3):034307, 2015.

\bibitem{MR2322149}
C.~Mouhot and R.~M. Strain.
\newblock Spectral gap and coercivity estimates for linearized {B}oltzmann
  collision operators without angular cutoff.
\newblock {\em J. Math. Pures Appl. (9)}, 87(5):515--535, 2007.

\end{thebibliography}

\end{document}